\title{Monomial stability of Frobenius images}
\author{Nikita Borisov}
\date{\today}
\DeclareSymbolFont{cyrletters}{OT2}{wncyr}{m}{n}
\DeclareMathSymbol{\sh}{\mathalpha}{cyrletters}{"78}
\begin{document}

\newcommand{\R}[0]{\mathbb{R}}
\newcommand{\C}[0]{\mathbb{C}}
\renewcommand{\H}[0]{\mathbb{H}}
\renewcommand{\S}[0]{\mathbb{S}}
\newcommand{\Z}[0]{\mathbb{Z}}
\newcommand{\Q}[0]{\mathbb{Q}}
\newcommand{\N}[0]{\mathbb{N}}
\newcommand{\F}[0]{\mathbb{F}}
\newcommand{\p}[1]{\left(#1\right)}
\newcommand{\id}[0]{\text{id}}
\renewcommand{\phi}[0]{\varphi}
\newcommand{\im}[0]{\text{im}\,}
\renewcommand{\ker}[0]{\text{ker}\,}
\newcommand{\coker}[0]{\text{coker}\,}
\newcommand{\cis}[0]{\text{cis}}
\renewcommand{\choose}[2]{\p{\begin{matrix}#1\\ #2\end{matrix}}}
\newcommand{\schoose}[2]{\p{\begin{smallmatrix}#1\\ #2\end{smallmatrix}}}
\newcommand{\sgn}[0]{\text{sgn}}
\newcommand{\bchoose}[2]{\begin{bmatrix}#1\\#2\end{bmatrix}}
\renewcommand{\char}[0]{\text{char}}
\newcommand{\Aut}[0]{\text{Aut}}
\renewcommand{\o}[1]{\overline{#1}}
\renewcommand{\P}{\mathbb{P}}
\newcommand{\wt}{\text{wt}}
\newcommand{\rg}{\text{rg}}
\newcommand{\inv}{\text{inv}}
\newcommand{\maj}{\text{maj}}
\newcommand\parspace[1][\bigskip]{\par#1\noindent\ignorespaces}

\newtheorem{prop}{Proposition}[subsection]
\newtheorem{lem}{Lemma}[subsection]
\newtheorem{thm}{Theorem}[subsection]
\newtheorem{conj}{Conjecture}[subsection]
\newtheorem{cor}{Corollary}[subsection]

\theoremstyle{definition}
\newtheorem{defi/}{Definition}[subsection]
\newtheorem{ques/}{Problem}[subsection]
\newtheorem{ex/}{Example}[subsection]
\newtheorem{rem/}{Remark}[subsection]

\newenvironment{defi}
  {\renewcommand{\qedsymbol}{|}%
   \pushQED{\qed}\begin{defi/}}
  {\popQED\end{defi/}}
\newenvironment{ex}
  {\renewcommand{\qedsymbol}{$\triangle$}%
   \pushQED{\qed}\begin{ex/}}
  {\popQED\end{ex/}}
\newenvironment{rem}
  {\renewcommand{\qedsymbol}{$\triangle$}%
   \pushQED{\qed}\begin{rem/}}
{\popQED\end{rem/}}
\newenvironment{ques}
  {\renewcommand{\qedsymbol}{}%
   \pushQED{\qed}\begin{ques/}}
{\popQED\end{ques/}}

\maketitle

\begin{abstract}
    We study representation stability in the sense of Church, Ellenberg, and Farb \cite{FI-module} through the lens of symmetric function theory and the different symmetric function bases. We show that a sequence, $(F_n)_n$, where $F_n$ is a homogeneous symmetric function of degree $n$, has stabilizing Schur coefficients if and only if it has stabilizing monomial coefficients. More generally, we develop a framework for checking when stabilizing coefficients transfer from one symmetric function basis to another. We also see how one may compute representation stable ranges from the monomial expansions of the $F_n$.\parspace

    As applications, we reprove and refine the representation stability of diagonal coinvariant algebras, $DR_n$. We also observe new representation stability phenomena of the Garsia-Haiman modules. This establishes certain stability properties of the modified Macdonald polynomials, $\tilde{H}_{\mu^{(n)}}[X;q,t]$ and the modified $q,t$-Kostka numbers, $\tilde{K}_{\mu^{(n)},\nu[n]}(q,t)$, for arbitrary sequences of partitions with $\mu^{(n)}\vdash n$ and $\mu^{(n)}\subseteq \mu^{(n+1)}$.
\end{abstract}

\section{Introduction}

In their seminal work, Church, Ellenberg, and Farb \cite{FI-module} created an effective framework called representation stability for studying various previously observed phenomena. The typical example involves a naturally occurring sequence $(V_n)_{n\in \N}$, where each $V_n$ is an $A_n$-module for an algebra $A_n$, typically the group algebra of the symmetric group $\textbf{k}[S_n]$ over some field $\textbf{k}$. Despite the fact that different $n$ yield representations of different algebras, these sequences may have decompositions into irreducibles that surprisingly ``stabilize'' in some appropriate sense. As an example, consider the configuration space of $n$ distinct points in $\C$, $\text{Conf}_n(\C)$. Then the natural action of $S_n$ on this space induces an action on the cohomology $H^k(\text{Conf}_n(\C))$, and hence yields an $S_n$-representation. Letting $\mathbb{S}^\lambda$ denote the irreducible representation of $S_n$ corresponding to partition $\lambda\vdash n$ and taking $k=1$, the first few representations admit the following decompositions
\begin{align*}
                    H^1\big(\text{Conf}_2(\mathbb{C})\big) &\cong \mathbb{S}^{(2)} \oplus \mathbb{S}^{(1,1)} \\
                    H^1\big(\text{Conf}_3(\mathbb{C})\big) &\cong \mathbb{S}^{(3)} \oplus \mathbb{S}^{(2,1)} \\
                    H^1\big(\text{Conf}_4(\mathbb{C})\big) &\cong \mathbb{S}^{(4)} \oplus \mathbb{S}^{(3,1)} \oplus \mathbb{S}^{(2,2)} \\
                    H^1\big(\text{Conf}_5(\mathbb{C})\big) &\cong \mathbb{S}^{(5)} \oplus \mathbb{S}^{(4,1)} \oplus \mathbb{S}^{(3,2)}\\
                    H^1\big(\text{Conf}_6(\mathbb{C})\big) &\cong \mathbb{S}^{(6)} \oplus \mathbb{S}^{(5,1)} \oplus \mathbb{S}^{(4,2)}
\end{align*}
and this pattern continues with $ H^1\big(\text{Conf}_n(\mathbb{C})\big) \cong \mathbb{S}^{(n)} \oplus \mathbb{S}^{(n-1,1)} \oplus \mathbb{S}^{(n-2,2)}$ for all $n\geq 4$. Note the multiplicity of $\mathbb{S}^{(n-|\lambda|,\lambda_1,\cdots,\lambda_{l(\lambda)})}$ has stabilized to a constant for each possible $\lambda$ and we will state the precise definition of the phenomena in the preliminaries. In the example above the stable multiplicities are 1 for $\lambda=\emptyset,(1),(2)$ and zero otherwise. Church and Farb \cite{Church_2013} also observed this phenomena with the cohomology of pure braid groups, homology of certain Torelli subgroups, and cohomology of flag varieties among many other examples.\parspace

The major contribution of Church, Ellenberg, and Farb \cite{FI-module} was the categorification of representation stability. It turns out that all the examples of $(V_n)_{n\in \N}$ mentioned have natural connective maps $\phi_n:V_n\rightarrow V_{n+1}$, which satisfy certain properties making them compatible with representations in the sequence. The authors of \cite{FI-module} identified the sequences $(V_n,\phi_n)_{n\in \N}$ with functors from the category of finite sets with injections, \textbf{FI}, to the category of $\C$-modules; such functors are called \textbf{FI}-modules. They were then able to show that a sequence exhibits representation stability if and only if the corresponding functor is finitely generated in the category of \textbf{FI}-modules.\parspace

Furthermore, they provided an explicit range $n\geq \text{stab-deg}(V_\bullet)+\wt(V_\bullet)$ after which point the multiplicities of the $\mathbb{S}^{(n-|\lambda|,\lambda_1,\cdots,\lambda_{l(\lambda)})}$ would stabilize. However, this lower bound may not be sharp and the quantities $\text{stab-deg}(V_\bullet),~\wt(V_\bullet)$ known as stability degree and weight, respectively, may be difficult to compute. One of the major goals of this paper is to provide ways of getting around needing to compute stability degree in particular.\parspace

Many authors have generalized the work of Church, Ellenberg, and Farb to other categories (\cite{Gadish_2017} \cite{gan2015noetherianpropertyinfiniteei} \cite{gan2017representationstabilitytheoremvimodules} \cite{Laudone} to name a few) as well as working on finding explicit stable decompositions and bounds for 
 different kinds of stable ranges of given sequences $(V_n)_{n\in \N}$ \cite{bahran}.  One inviting approach for combinatorialists is to use the Frobenius characteristic map, which provides a bridge between representations of $S_n$ and symmetric functions (see \cite{sagan} Section 4.7). For instance, Hersch and Reiner \cite{hershReiner} worked heavily with higher Lie characters in the algebra of symmetric functions in order to find stable ranges for the cohomology of configuration spaces of points in $\R^d$.\parspace

Another benefit of working with the Frobenius image of representation is that we may have some formula describing the symmetric function corresponding to a representation, when the irreducible decomposition or even character values may not be known. For example, the diagonal coinvariant algebras have no known combinatorial descriptions for their irreducible decompositions but their Frobenius images have an explicit monomial formula proved by Carlsson and Mellit \cite{shuffle} and first conjectured in \cite{haglund2004combinatorialformulacharacterdiagonal}. Similarly, the modified Macdonald polynomials, $\tilde{H}_\mu[X;q,t]$, which are the Frobenius images of certain $S_n$-modules called Garsia-Haiman modules, have no known combinatorial formula for their Schur expansion, but have a monomial formula given in \cite{HHL}. Both of these examples turn out to exhibit representation stability.\parspace

We describe a method for leveraging these monomial expansions to prove representation stability and provide stable ranges that may not be strict but are easy to compute from the combinatorics of the monomial expansions. One of the main observations of this paper is that a sequence of symmetric functions has stabilizing Schur expansions if and only if it has stabilizing monomial expansion, but the monomial expansions may not stabilize uniformally (Proposition \ref{genTransfer}). Leading up to this observation, we study how stability of expansions transfers between pairs of symmetric function bases in general, i.e. for bases that are not the Schur or monomial ones. In Proposition \ref{stabTransferResult}, we provide necessary and sufficient conditions on the change of basis coefficients to get stability transfer between a pair of bases. This proposition is then systematically applied to each pair of classical symmetric function bases. We identify the monomial and ``class function'' basis, $p_\lambda/z_\lambda$, as being particularly compatible with Schur functions yielding Theorem \ref{mainResult}, the main result. In the second half of the paper, we apply the theorem to find stable ranges of some diagonal coinvariant algebras and Macdonald polynomials.\parspace

Section \ref{prelims} covers the preliminaries including necessary results for torsion-free \textbf{FI}-modules and combinatorial formulas for the change of basis coefficients between the classical symmetric function bases. Section \ref{transferSec} proves the conditions for stability transfer and applies it to the classical bases, concluding with the main result. Sections \ref{coinvSec} and \ref{macSec} cover the applications to coinvariant algebras and Macdonald polynomials. It turns out that the stable range in Theorem \ref{mainResult}(a) can be slightly sharpened and proved in a different way, although this will not lead to any sharper ranges in the applications. Hence, we relegate this proof to the Appendix. If the reader is only interested in the applications to coinvariant algebras and Garsia-Haiman modules, we recommend the reader read the Appendix in place of Section \ref{transferSec}. 

\subsection*{Acknowledgments}
I would like to thank Jennifer Wang, Nir Gadish, George Seelinger, and my advisor Jim Haglund for helpful discussions and insightful comments.

\section{Preliminaries and definitions}\label{prelims}
We will denote the irreducible representation of $S_n$, called a Specht module, indexed by partition $\lambda\vdash n$ by $\mathbb{S}^{\lambda}$. If $\lambda$ is a partition of some $m$ and $n-|\lambda|\geq \lambda_1$, then let $\lambda[n]:=(n-|\lambda|,\lambda_1,\hdots,\lambda_{l(\lambda)})$. Note that every partition can be put into the form $\lambda[n]$ for a unique $\lambda$ and $n$. We will sometimes use $\lambda$ to refer to the set of cells in its Young diagram and let $H\subseteq_{\text{diag}} \lambda$ be subset of these cells, while $\mu\subseteq_{\text{set}}\lambda$ will denote a subset of the multi-set of parts of $\lambda$, and $\mu\subseteq \lambda$ will denote a sub-partition. As usual, $l(\lambda)$ is the length of a partition and $\lambda'$ is the conjugate partition to $\lambda$. Partions will be drawn in French notation.\parspace

We summarize the definition of representation stability introduced by Church, Ellenberg, and Farb in \cite{FI-module}. Let $\textbf{FI}$ be the category of finite sets with injections as morphisms. An \textbf{FI}-module is a functor, $F$, from \textbf{FI} to the category of $\C$-modules, $\textbf{Mod}_\C$. Throughout the paper, we will work with the field $\C$, although it may be replaced by any characteristic 0 field. Letting $[n]=\{1,\hdots,n\}$ and $\iota_n:[n]\rightarrow [n+1]$ the injection mapping $i$ to $i$, we can set $V_n=F([n])$ and $\phi_n=F(\iota_n)$. Then $F$ induces an $S_n$-module structure on $V_n$ by considering bijections on $[n]$ and furthermore we get a sequence 
\begin{equation}\label{FI-seq}
    V_0\overset{\phi_0}{\longrightarrow}V_1\overset{\phi_1}{\longrightarrow}V_2\overset{\phi_2}{\longrightarrow}\cdots
\end{equation}
of $S_n$-modules for varying $n$. By considering the standard embedding $S_n \hookrightarrow S_{n+1}$ by letting $\sigma\in S_n$ fix $n+1$, the connective maps $\phi_n:V_n\rightarrow V_{n+1}|_{S_n}$ are $S_n$-equivariant. It is not hard to verify that an equivalent way of thinking about \textbf{FI}-modules is taking a sequence $(V_n\in \textbf{Mod}_{\C[S_n]})_{n\in \N}$ with connective maps $\phi_n$ as in (\ref{FI-seq}) that are $S_n$-equivariant and also satisfy 
\begin{equation}\label{FIcriterion}
    (n+1~n+2)\cdot\phi_{n+1}\circ \phi_n=\phi_{n+1}\circ\phi_n,
\end{equation}
a condition that is forced by functoriality. For this reason, we will think about \textbf{FI}-modules and sequences (\ref{FI-seq}) with $S_n$-equivariant $\phi_n$ satisfying (\ref{FIcriterion}), interchangeably.\parspace

\begin{defi}
    Consider a sequence $(V_n\in \textbf{Mod}_{\C[S_n]})_{n\in \N}$ with decompositions
    $$V_n\cong \bigoplus_{\lambda}(\mathbb{S}^{\lambda[n]})^{\oplus c_{\lambda,n}},$$
    where $\lambda$ ranges over all partitions and $c_{\lambda,n}=0$, when $\lambda[n]$ is not defined. We say that $V_\bullet$ is \textbf{representation multiplicity stable (RMS)}, if for all partitions $\lambda$, there is an $N_\lambda$, such that for all $n\geq N_\lambda$, the coefficients $c_{\lambda,n}=c_{\lambda,n+1}$ have stabilized.\parspace

    We say that $V_\bullet$ is \textbf{uniformally representation multiplicity stable (URMS)}, if there is an $N$, such that for all $n\geq N$ and all $\lambda$, the coefficients $c_{\lambda,n}=c_{\lambda,n+1}$ have stabilized. This $N$ is called a \textbf{URMS stable range}.\parspace

    Following \cite{FI-module}, a sequence $(V_n\in \textbf{Mod}_{\C[S_n]})_{n\in \N}$ with $S_n$-equivariant connective maps $\phi_n$ satisfying (\ref{FIcriterion}) is said to be \textbf{uniformally representation stable (URS)}, if there is a URMS stable range, $N$, such that the following two additional conditions hold for $n\geq N$: the map $\phi_n$ is injective and $S_{n+1}\cdot \phi_n(V_n)=V_{n+1}$. We will call such an $N$ a \textbf{URS stable range}.
\end{defi}

Notice that the URMS condition does not depend on any conditions on connective maps between the $V_n$, while the URS condition does. See \cite{bahran} for a summary of other types of stable ranges that have appeared in the literature.\parspace

The authors of \cite{FI-module} showed that an \textbf{FI}-module is finitely generated if and only if the corresponding sequence $(V_n,\phi_n)$ is uniformaly representation stable (Proposition 3.3.3 \cite{FI-module}). In addition, they show that the category of \textbf{FI}-modules is noetherian and hence, any \textbf{FI}-submodule of a finitely generated \textbf{FI}-module is finitely generated (Theorem 1.3 \cite{FI-module}). We will use this fact to see the representation stability of sequences of Garsia-Haiman modules in Section \ref{macSec}.\parspace

One important quantity related to \textbf{FI}-modules is the \textbf{weight}, which may be defined for any sequence of $S_n$-modules $V_\bullet =(V_n\in\text{Mod}_{S_n})_{n\in \N}$ as follows
$$\wt(V_\bullet):=\sup_{\lambda,~n}\left\{|\lambda|:~[\S^{\lambda[n]},V_n]\neq 0\right\},$$
where the supremum is taken over partitions $\lambda$ and $n\geq |\lambda|+\lambda_1$ and we recall that $[V,W]=\dim_\C\text{Hom}_{\C[S_n]}(V,W)$ for $S_n$-modules $V,W$. Note that by definition, this quantity is finite for URMS sequences. The weight quantity will be crucial to the statement of several results in the paper and so we are interested in computing or at least bounding the weight. See the end of Section \ref{torFreeSec} for a discussion of one good approach.\parspace

\subsection{Torsion-free \textbf{FI}-modules}\label{torFreeSec}
We say an \textbf{FI}-module is \textbf{torsion-free}, if the image of any morphism is an injective linear map. This is just equivalent to the the $\phi_n$ in the corresponding sequence $(V_n,\phi_n)_{n\in \N}$ being injective, since the functor necessarily maps isomorphisms in \textbf{FI} to bijective linear maps.\parspace

Recall the definition of the central primitive idempotents: for $\lambda \vdash n$
$$\text{X}^\lambda=\frac{f_\lambda}{n!}\sum_{\sigma\in S_n}\chi^\lambda(\sigma)\sigma\in Z(\C[S_{n}]),$$
which satisfy $(\text{X}^\lambda)^2=\text{X}^\lambda$ and $\text{X}^\lambda\text{X}^\mu=0$ for $\lambda,\mu$ distinct partitions of $n$. Note that we can naturally view $\text{X}^\lambda$ as an element $\C[S_m]$ for $m\geq n$ and hence apply it to elements of $S_m$-representations. In particular, if $V$ is an $S_m$-representation and $\lambda\vdash n\leq m$, we have that $\text{X}^\lambda$ is an $S_n$-equivariant projection from $V$ onto an $S_n$-submodule of $V|_{S_n}=\text{Res}^{S_m}_{S_n}(V)$ that is $S_n$-isomorphic to $(\S^\lambda)^{\oplus c}$, where $c=[V|_{S_n},\S^\lambda]$. We will use these operators in the proof of the following lemma.\parspace

\begin{lem}\label{monotonictyLemma}(Monotonicity lemma)
    Let $(V_n,\phi_n)_n$ be a sequence arising from a torsion-free \textbf{FI}-module, i.e. each $\phi_n$ is injective. Then for all partitions $\lambda$ and $n\geq |\lambda|+\lambda_1$,
    \begin{equation}\label{monotonicity}
        \left[V_n,\mathbb{S}^{\lambda[n]}\right]\leq \left[V_{n+1},\mathbb{S}^{\lambda[n+1]}\right].
    \end{equation}
\end{lem}

\begin{proof}
    Recall the definition of the natural torsion-free \textbf{FI}-module, $(\mathbb{S}^{\lambda[\bullet ]},\psi_\bullet)$, called $V(\lambda)$ in \cite{FI-module}. Here $\mathbb{S}^{\lambda[n]}$ is taken to be the zero module for $n<|\lambda|+\lambda_1$ and for $n\geq |\lambda|+\lambda_1$, $\psi_n:\mathbb{S}^{\lambda[n]}\rightarrow \mathbb{S}^{\lambda[n+1]}$ is the unique (up to scaling) injective $S_n$-equivariant map between the two modules.\parspace

    Then we get a torsion-free \textbf{FI}-module
    $$\dots \overset{\phi_{n-1}\otimes \psi_{n-1}}{\longrightarrow}V_{n}\otimes \mathbb{S}^{\lambda[n]}\overset{\phi_n\otimes \psi_n}{\longrightarrow}V_{n+1}\otimes \mathbb{S}^{\lambda[n+1]}\overset{\phi_{n+1}\otimes \psi_{n+1}}{\longrightarrow}\dots$$
    and the self-duality of irreducible $S_n$-representations implies 
    $$[V_n\otimes \mathbb{S}^{\lambda[n]},\mathbb{S}^{(n)}]=\dim\text{Hom}_{S_n}(V_n\otimes \mathbb{S}^{\lambda[n]},\S^{(n)})=\dim\text{Hom}_{S_n}(V_n,\S^{\lambda[n]}\otimes \S^{(n)})=[V_n,\mathbb{S}^{\lambda[n]}].$$
    Hence it suffices to prove the lemma for the case $\lambda=\emptyset$, which can be restated in terms of invariants: given a torsion-free \textbf{FI}-module $(V_n,\phi_n)_n$,
    $$\dim((V_n)^{S_n})\leq \dim((V_{n+1})^{S_{n+1}}).$$
    Suppose for contradiction the statement failed for some $(V_n,\phi_n)_n$ and there was a basis of $S_n$-invariants of $V_n$, $\{v_1,v_2,\dots,v_c\}$, but $\dim((V_{n+1})^{S_{n+1}})<c$. Then consider the $S_{n+1}$-submodule of $V_{n+1}$ generated by $\phi_n(v_1),\cdots,\phi_n(v_c)$:
    $$W:=S_{n+1} \langle \phi_n(v_1),\cdots,\phi_n(v_c)\rangle.$$
    Since $\text{Span}_\C(\phi_n(v_1),\cdots,\phi_n(v_c))$ forms an $S_n$-module isomorphic to $(\S^{(n)})^{\oplus c}$, $W$ is $S_{n+1}$-isomorphic to some quotient of 
    $$S_{n+1}\otimes_{S_n}(\S^{(n)})^{\oplus c}=\text{Ind}_{S_n}^{S_{n+1}}\p{(\S^{(n)})^{\oplus c}}\cong_{S_{n+1}} (\S^{(n+1)})^{\oplus c}\oplus (\S^{(n,1)})^{\oplus c},$$
    by the branching rule. Hence 
    $$W\cong_{S_{n+1}} (\S^{(n+1)})^{\oplus a}\oplus (\S^{(n,1)})^{\oplus b},$$
    for some $a,b\leq c$. Note that $a=\dim(W^{S_{n+1}})\leq \dim((V_{n+1})^{S_{n+1}})<c$. Then applying the projection operator, $\text{X}^{(n+1)}$, to $\text{Span}_\C(\phi_n(v_1),\cdots,\phi_n(v_c))$ yields a subspace of an $a$-dimensional space and so there is a non-trivial linear relation:
    $$\alpha_1\cdot \text{X}^{(n+1)}\phi_n(v_1)+\cdots +\alpha_c\cdot \text{X}^{(n+1)}\phi_n(v_c)=0.$$
    Let $v=\alpha_1\cdot v_1+\cdots +\alpha_c\cdot v_c\neq 0$, since the $v_i$ form a basis. Since $v$ is an $S_n$-invariant, $S_{n+1}\langle \phi_n(v)\rangle$ is isomorphic to a quotient of $\S^{(n+1)}\oplus \S^{(n,1)}$. But since $\text{X}^{(n+1)}\phi_n(v)=0$ and $\phi_n$ is injective, we conclude that 
    $$S_{n+1}\langle \phi_n(v)\rangle\cong_{S_{n+1}} \mathbb{S}^{(n,1)}.$$
    While an exceptional case can occur for small $n$, we will demonstrate the contradiction when $n\geq 3$ and return to the case $n<3$ later.\parspace

    By the functoriality of \textbf{FI}-modules, one has 
    $$(n+1~n+2)\cdot \phi_{n+1}\circ \phi_n(v)=\phi_{n+1}\circ \phi_n(v)$$
    and so $\phi_{n+1}\circ \phi_n(v)$ is a non-trivial $(S_n\times S_2)$- invariant in $V_{n+2}$. By the Pieri rule, the only irreducible representations of $S_{n+2}$ with a non-trivial $(S_n\times S_2)$-invariant are $\mathbb{S}^{(n+2)},\mathbb{S}^{(n+1,1)},\mathbb{S}^{(n,2)}$. Furthermore, this $(S_n\times S_2)$-invariant is unique up to scaling. Hence, $\phi_{n+1}$ induces an injective $S_{n+1}$-equivariant map from $S_{n+1}\langle \phi_n(v)\rangle\cong_{S_{n+1}} \mathbb{S}^{(n,1)}$ to $\mathbb{S}^\mu$:
    $$\rho: S_{n+1}\langle \phi_n(v)\rangle\rightarrow \S^\mu|_{S_{n+1}},$$
    for some $\mu=(n+2),(n+1,1),(n,2)$ such that it maps $\phi_n(v)$ to the unique (up to scaling) $(S_n\times S_2)$-invariant of $\S^\mu$, call it $f$. Note that $\mu\neq (n+2)$ since there is no injective $S_{n+1}$-equivariant map from $\S^{(n,1)}\rightarrow \S^{(n+2)}$ by the branching rule.\parspace

    In the case $\mu=(n,2)$, since we are taking $n\geq 3$, the partition, $(n-1,2)$, is well-defined and $\text{X}^{(n-1,2)}\phi_n(v)=0$, since $S_{n+1}\langle \phi_n(v)\rangle\cong \S^{(n,1)}$ doesn't contain $\S^{(n-1,2)}$ as a submodule. We can argue that 
    $$0\neq \text{X}^{(n-1,2)}f=\text{X}^{(n-1,2)}\rho(\phi_n(v))=\rho(\text{X}^{(n-1,2)}\phi_n(v)),$$
    creating a contradiction. Decompose the restriction 
    $$\S^{(n,2)}|_{S_{n+1}}= U\oplus W,$$
    where $U$ is isomorphic to $\S^{(n,1)}$ and $W$ is isomorphic to $\S^{(n-1,2)}$. Then we can write $f=g+h$ for $g\in U$ and $h\in W$. Suppose that $h=0$ and $f=g\in U$. Then for any $\sigma\in S_{n+2}$, we may write $\sigma=\pi\circ(n+1~n+2)$ or $\sigma=\pi$ for some $\pi \in S_{n+1}$. Since $g=f$ is an $(S_n\times S_2)$-invariant (so that $(n+1~n+2)\cdot g=g$) and $S_{n+1}\cdot U\subset U$, we get $\sigma \cdot g\in U$ in either case. But then $S_{n+2}\langle g\rangle=U\subsetneq \S^{(n,2)}$, contradicting irreducibility of $\S^{(n,2)}$. Hence $0\neq h=\text{X}^{(n-1,2)}f$.\parspace
    
    In the case $\mu=(n+1,1)$, we can similarly argue that $\text{X}^{(n+1)}\phi_n(v)=0$, but $\text{X}^{(n+1)}f\neq 0$, yielding another contradiction. To see this consider the restriction 
    $$\S^{(n+1,1)}|_{S_{n+1}}\cong U\oplus W,$$
    where $U$ is isomorphic to $\S^{(n,1)}$ and $W$ is isomorphic to $\S^{(n+1)}$. Then we can write $f=g+h$ for $g\in U$ and $h\in W$. Again, we can argue that if $h=0$, then $S_{n+2}\langle g\rangle=U\subsetneq \S^{(n+1,1)}$. Hence $h=\text{X}^{(n+1)}f\neq 0$. Hence all three possible cases for $\mu$ yield a contradiction, forcing our initial assumption that $\dim((V_{n+1})^{S_{n+1}})<\dim((V_n)^{S_n})$ to be false for $n\geq 3$.\parspace

    Finally, we cover the cases $n<3$. The case $n=0$ is trivial. In the case $n=1$, we are forced to have a $S_{2}$-equivariant map from $\S^{(1,1)}$ to $\S^{(2,1)}$ that maps the generator of $\S^{(1,1)}$ to the $(S_1\times S_2)$-invariant of $\S^{(2,1)}$, but this is not an $S_2$-equivariant map by the $\mu=(n+1,1)$ case above. For $n=2$, we again can rule out $\mu=(3,1)$, but it turns out that there is in fact a $S_{3}$-equivariant map from $\S^{(2,1)}$ to $\S^{(2,2)}$ mapping the $(S_2\times S_1)$-invariant of $\S^{(2,1)}$ to the $(S_2\times S_2)$-invariant of $\S^{(2,2)}$. In this case, we just consider the argument for the next connective map, $\phi_{4}$, which induces an $S_{4}$-equivariant map from $\S^{(2,2)}$ to $\S^{\mu}|_{S_4}$ for $\mu\vdash 5$, sending the $(S_2\times S_2)$- invariant, $f$, to an $(S_2\times S_3)$-invariant, $g$, unique up to scalar multiple, by functoriality of the \textbf{FI}-module. This combined with the branching rule forces $\mu=(3,2)$, but this would not yield a $S_4$-equivariant map, since one can easily check that $\text{X}^{(3)}f=0$, but $\text{X}^{(3)}g\neq 0$ by the branching rule. 
\end{proof}

\begin{rem}
    We interpret the proof above as proving the following representation theoretic fact for $n\neq 2$. Suppose $V_{n+1}$ is an $S_{n+1}$-module and $V_n$ is an $S_n$-invariant subspace. Then to show that $[V_n,\S^{\lambda[n]}]\leq[V_{n+1},\S^{\lambda[n+1]}]$, it suffices to find an $S_{n+2}$-module $V_{n+2}$ containing $V_{n+1}$ (with compatible $S_{n+1}$ action on $V_{n+1}$), such that $(n+1~n+2)$ acts as the identity on $V_n$. 
\end{rem}

This yields the following easy corollary, which establishes that for torsion-free \textbf{FI}-modules it suffices to show that Specht module multiplicities have stabilized in order to establish representation stability in the sense of Church, Ellenberg, and Farb.

\begin{cor}\label{URMS->URS}
    Let $(V_n,\phi_n)_n$ be a sequence arising from a finitely generated torsion-free \textbf{FI}-module. Then a URMS stable range is also a URS stable range.
\end{cor}

\begin{proof}
    If $N$ is a URMS stable range, then we have stable decompositions for $n\geq N$
    $$V_n\cong_{S_n}\bigoplus_{\lambda}(\S^{\lambda[n]})^{\oplus c_\lambda}$$
    for $c_\lambda\in \N$ independent of $n$. Since the maps $\phi_n$ are automatically injective, we only have to verify that $W:=S_{n+1}\langle \phi_n(V_n)\rangle=V_{n+1}$ for $n\geq N$. We can look at the modified torsion-free \textbf{FI}-module with corresponding sequence
    $$\cdots \overset{\phi_{n-1}}{\longrightarrow}V_n\overset{\phi_{n}}{\longrightarrow}W\overset{\phi_{n+1}|_{W}}{\longrightarrow}V_{n+2}\overset{\phi_{n+2}}{\longrightarrow}\cdots$$
    Since $W\subseteq V_{n+1}$, we have that $[W,\S^{\lambda[n+1]}]\leq c_\lambda$ for all relevant $\lambda$. On the other hand, applying the Lemma \ref{monotonictyLemma} to the sequence above implies $[W,\S^{\lambda[n+1]}]\geq c_\lambda$. This forces us to conclude that $W\cong \bigoplus_{\lambda}(\S^{\lambda[n+1]})^{\oplus c_\lambda}\cong V_{n+1}$ and so $W=V_{n+1}$ as claimed.
\end{proof}

Another nice property of torsion-free \textbf{FI}-modules, with corresponding sequence $(V_n,\phi_n)_n$, is that the function $n\mapsto \dim(V_n)$ is closely related to the weight of the \textbf{FI}-module. It is well-known that $\dim\p{\S^{\lambda[n]}}$ is agree with a polynomial in $n$ of degree $|\lambda|$ for sufficiently large $n$ (see \cite{polyDim} for explicit combinatorial formulas for this polynomial). In general, knowing the function $n\mapsto \dim(V_n)$ does not necessarily give us information about the weight, nor does it allow us to conclude representation stability. For instance, the non-torsion-free \textbf{FI}-module corresponding to the sign representation
$$V_n=\S^{(1^n)},~\phi_n=0,$$
has $\wt(V_\bullet)=\infty$ but $\dim(V_n)=1$, a degree 0 polynomial. We bring attention to the fact that we cannot make take consecutive $\phi_n$ to be non-zero while respecting (\ref{FIcriterion}). As another example, one can have a torsion-free \textbf{FI}-module
$$V_n=(\S^{(n)})^{\oplus p(n)},$$
with $\phi_n$ taken to be arbitrary injective linear maps and $p(n)$ is any non-decreasing polynomial. The $\wt(V_\bullet)=0$, but $p(n)$ can have any degree.\parspace

However, for \textit{finitely generated}, torsion-free \textbf{FI}-modules, it follows by Lemma \ref{monotonictyLemma} that $\dim(V_n)$ is eventually a polynomial in $n$ of degree $\wt(V_\bullet)$. Even without establishing that the sequence $(V_n,\phi_n)_n$ is finitely generated (URS), we can still bound the weight by using its dimension as seen in the following lemma.

\begin{lem}\label{weightDegree}
    Let $(V_n,\phi_n)_n$ be a sequence arising from a torsion-free \textbf{FI}-module, such that $\dim(V_n)=O(n^k)$, for some $k\in\N$. Then the weight of the \textbf{FI}-module is $\leq k$.
\end{lem}

\begin{proof}
    The hypothesis implies that $\limsup_{n\rightarrow \infty}\frac{\dim(V_n)}{n^k}<\infty$. Now suppose for contradiction that the weight of the \textbf{FI}-module is not $\leq k$, so there is some $\lambda\vdash K>k$ and $n_0\geq |\lambda|+\lambda_1$, such that a copy of $\S^{\lambda[n_0]}$ appears in the decomposition of $V_{n_0}$. By Lemma \ref{monotonictyLemma}, a copy of $\S^{\lambda[n]}$ will appear in $V_n$ for all $n> n_0$ and so 
    $$\dim(V_n)=\dim(\S^{\lambda[n]})+f(n),$$
    where $f(n)$ is some positive function of $n$ corresponding to the dimension of the quotient space. But since $\dim(\S^{\lambda[n]})$ is a eventually polynomial in $n$ in degree $>k$, $\limsup_{n\rightarrow \infty}\frac{\dim(V_n)}{n^k}\geq \limsup_{n\rightarrow \infty}\frac{\dim(\S^{\lambda[n]})}{n^k}=\infty$, a contradiction.
\end{proof}

One might ask if for a finitely generated, torsion-free \textbf{FI}-module the sharpest stable range $n\geq N$, for which $\dim(V_n)$ starts to agree with a polynomial in $n$, agrees with the sharpest URS stable range. We note that the \textbf{FI}-module, $(\S^{(1^k)[\bullet]},\psi_\bullet)$, appearing in the proof of the monotonicity lemma has $\dim(\S^{(1^k)[n]})=\binom{n-1}{k}$ stably agreeing with a polynomial for $n\geq 1$. However, the sharpest URS stable range is $n\geq k+1$, since otherwise $(1^k)[n]$ is undefined. Understanding how these two stable ranges relate, may be an interesting further problem.

\subsection{Symmetric function theory}

We refer the reader to \cite{stanley2} and \cite{macdonald1998symmetric} for an introduction to the theory of symmetric functions. Let $\Lambda$ denote the set of symmetric functions over $\C$ in variables $x_1,x_2,\hdots$ and let $\Lambda^{(n)}$ denote the $n$-th degree homogeneous component.\parspace

Recall some of the classical homogeneous bases for $\Lambda$, which are indexed by partitions with $u_\lambda\in \Lambda^{(n)}$ whenever $\lambda\vdash n$. Let $(m_\lambda)_{\lambda}$ denote the monomial symmetric function basis, $(h_\lambda)_{\lambda}$ denote the complete homogeneous basis, $(e_\lambda)_{\lambda}$ denote the elementary basis, $(p_\lambda)_{\lambda}$ denote the power sum basis, and $(s_\lambda)_{\lambda}$ denote the Schur function basis.\parspace

Define the Grothendieck ring of $(S_n)_{n\in \N^+}$, $R=\oplus_n R^{(n)}$, where $R^{(n)}$ is the free $\C$-module generated by isomorphism classes of irreducible representations of $S_n$. Letting $[V]$ denote the isomorphism class of an $S_n$-representation, we can identify $[V\oplus W]$ with $[V]+[W]$ in $R^{(n)}$, thereby allowing us to interpret the isomorphism class of \textit{any} $S_n$-representation as an element of $R^{(n)}$ by Maschke's theorem. The product in the ring is given by the induced tenor product: if $V$ is an $S_n$-representation and $W$ is an $S_m$-representation, then $[V]\cdot [W]:=\text{Ind}_{S_n\times S_m}^{S_{n+m}}(V\otimes W)$, interpreting $S_n\times S_m$ as a Young subgroup of $S_{n+m}$.\parspace

Then one can define the Frobenius characteristic map, $\mathcal{F}:R^{(n)}\rightarrow \Lambda^{(n)}$, to be the map that sends $\mathcal{F}(\mathbb{S}^{\lambda})$ to $s_\lambda$ and is extended by linearity. Then by taking the direct sum over graded components yields a map $\mathcal{F}:R\rightarrow \Lambda$ which is a graded $\C$-algebra isomorphism of the Grothendieck ring and the algebra of symmetric functions.\parspace

Taking the natural inner product on characters of $S_n$-representations, $\mathcal{F}$, induces an inner product on $\Lambda^{(n)}$ called the Hall scalar inner product, which we will just denote by $\langle \cdot,\cdot\rangle$. A pair of homogeneous basis $(u_\lambda\in \Lambda^{(n)})_{\lambda\vdash n}$, $(v_\lambda\in \Lambda^{(n)})_{\lambda\vdash n}$, are dual if $\langle u_\lambda,v_\mu\rangle=\delta_{\lambda,\mu}$. It is well-known that $(h_\lambda)_\lambda$, $(m_\mu)_\mu$ form a dual basis pair, as do $(p_\lambda)_\lambda$, $(p_\mu/z_\mu)_\mu$, and the Schur functions are self-dual, being the images of the irreducible representations of $S_n$. Here, 
$$z_\mu:=\mu_1\mu_2\cdots \mu_{l(\mu)}\cdot \prod_{j\geq 1}c_j(\mu)!$$
for $\mu=(\mu_1,\cdots,\mu_{l(\mu)})$, where $c_j(\mu)$ is the number of parts of $\mu$ of size $j$. This is the order of the centralizer of a $\sigma\in S_n$ of cycle type $\mu$.\parspace

We have the following important identity for dual basis pairs. If $(u_\lambda),(v_\mu)$ are a dual pair and $(f_\lambda),(g_\mu)$ are a dual pair and we have a change of basis
$$u_\lambda=\sum_{\mu\vdash n}A_{\lambda,\mu}f_\mu$$
for $A_{\lambda,\mu}\in\C$, then the dual (or adjoint) identity is
$$g_\mu=\sum_{\lambda\vdash n}A_{\lambda,\mu}v_\lambda.$$

We now go through the various change of basis transformations between these bases. See \cite{egeciogluBrickTab} for a more comprehensive overview of the change of basis coefficients. We start with some of the following change of bases
\begin{align*}
    h_{\lambda}&=\sum_{\mu\vdash n}IM_{\lambda,\mu}m_\mu\\
    e_\lambda&=\sum_{\mu\vdash n}BM_{\lambda,\mu}m_\mu\\
    p_\lambda&=\sum_{\mu\vdash n}|OB_{\lambda,\mu}|\, m_\mu\\
    h_\lambda&=\sum_{\mu\vdash n}(-1)^{n-l(\mu)}|B_{\lambda,\mu}|\, e_\mu\\
    p_\lambda&=\sum_{\mu\vdash n}(-1)^{l(\lambda)-l(\mu)}w(B_{\mu,\lambda}) h_\mu\\
    s_\lambda&=\sum_{\mu\vdash n}K_{\lambda,\mu}m_\mu.
\end{align*}
Here $IM_{\lambda,\mu}$ counts the number of $l(\lambda)\times l(\mu)$ matrices with natural number entries, $A\in M_{l(\lambda),l(\mu)}(\N)$, such that the sum of the entries in the $i$-th row is $\lambda_i$ and the sum of the entries in the $j$-th column is $\mu_j$. Similarly, $BM_{\lambda,\mu}$ is the number of such matrices only using the entries $0,1$. We will refer to the coefficients of the inverse change of basis transformation $IM^{-1}_{\lambda,\mu}$ (resp. $BM^{-1}_{\lambda,\mu}$), which have combinatorial formulas in terms of primitive bi-brick tableaux which can be found in \cite{primBrick}.\parspace

The so-called ordered brick tableau, $OB_{\lambda,\mu}$ are maps $f$ from the distinguished parts of $\lambda$ to the distinguished parts of $\mu$ such that the parts in the pre-image of each $\mu_i$ sum to $\mu_i$. The set of brick tableau, $B_{\lambda,\mu}$ consists of laying ``bricks'' specified  by the parts of $\mu$, where equal sized parts of $\mu$ are not distinguished, into the rows of $\lambda$. The weight of a brick tableau, $T$, given by $w(T)$ is the product of the rightmost parts of $\mu$ in each row of $\lambda$. The total weight, $w(B_{\mu,\lambda})=\sum_{T\in B_{\mu,\lambda}}w(T)$. See Fig \ref{brickTab}.\parspace
 
\begin{figure}
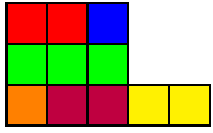

    \centering
    \begin{ytableau}
*(red) & *(red)  &*(blue)  \\
*(green)  & *(green)  &*(green)\\
*(orange)& *(purple)& *(purple)& *(yellow)& *(yellow)
\end{ytableau}
    \caption{A brick tableau of shape $\lambda=(5,3^2)$ and content $\mu=(3,2^3,1^2)$. The weight is $1\cdot 3\cdot 2=6$.}
    \label{brickTab}
\end{figure}

The Kostka numbers $K_{\lambda,\mu}:=\#\text{SSYT}(\lambda,\mu)$ count the number of semi-standard Young tableau of shape $\lambda$ and content $\mu$. Since these are natural numbers, it follows that the Frobenius image of any $S_n$-module, will be a non-negative sum of Schur functions and hence also a non-negative sum of monomial symmetric functions. Furthermore, the monotonicity observed in Lemma \ref{monotonictyLemma} is passed down to the coefficients of the monomial expansion as well.

\begin{cor}\label{monomialMonotonicity}
    Let $(V_n,\phi_n)_n$ be a sequence arising from a torsion-free \textbf{FI}-module. Then the Schur coefficients of the Frobenius images satisfy
    \begin{equation}\label{schurIneq}
        \langle \mathcal{F}(V_n),s_{\lambda[n]\rangle}\rangle\leq \langle \mathcal{F}(V_{n+1}),s_{\lambda[n+1]\rangle}\rangle,
    \end{equation}
    for partition $\lambda$ and $n\geq |\lambda|+\lambda_1$. The monomial coefficients satisfy
    \begin{equation}\label{monomIneq}
        \langle \mathcal{F}(V_n),h_{\mu[n]\rangle}\rangle\leq \langle \mathcal{F}(V_{n+1}),h_{\mu[n+1]\rangle}\rangle,
    \end{equation}
    for partition $\mu$ and $n\geq |\mu|+\mu_1$.
\end{cor}

\begin{proof}
    The first inequality is an immediate consequence of Lemma \ref{monotonictyLemma}. Since $\langle s_{\lambda[n]},h_{\mu[n]}\rangle=K_{\lambda[n],\mu[n]}$ by the duality of $(h_\mu)$ and $(m_\mu)$, we have that
    \begin{align*}
       \langle \mathcal{F}(V_n),h_{\mu[n]}\rangle&=\sum_{\lambda}K_{\lambda[n],\mu[n]}\cdot \langle\mathcal{F}(V_n),s_{\lambda[n]}\rangle\\
       &\leq \sum_{\lambda}K_{\lambda[n+1],\mu[n+1]}\cdot \langle\mathcal{F}(V_{n+1}),s_{\lambda[n+1]}\rangle\\
       &=\langle \mathcal{F}(V_{n+1}),h_{\mu[n+1]}\rangle
    \end{align*}
    using the first inequality in the statement and $K_{\lambda[n],\mu[n]}\leq K_{\lambda[n+1],\mu[n+1]}$ which follows by the injective map constructed in the proof of Lemma \ref{kostkaStable}.
\end{proof}

For the inverse change of basis we have the inverse Kostka numbers
$$m_\mu=\sum_{\lambda\vdash n}K^{-1}_{\mu,\lambda}s_\lambda,$$
a formula for which is given as follows. A rim hook of $\lambda$ is a set $H\subseteq_{\text{diag}} \lambda$ of cells encountered by taking a walk in $\lambda$ consisting of east and south steps, such that the removal of these cells still leaves a partition shape. A rim hook is special if it contains a cell from the first row of $\lambda$. A special rim hook tableau of shape $\lambda$ with content $\mu$, is obtained by successively removing special rim hooks $H_1,H_2,\hdots,H_{l(\mu)}$ from $\lambda$ until one gets the empty partition such that the $|H_1|,|H_2|,\hdots,|H_{l(\mu)}|$ are a rearrangement of the parts of $\mu$. The sign of a special rim hook tableau is $\sgn(T)=\prod_{i}(-1)^{\#\text{ south steps in }H_i}$. Let $\text{SRHT}(\lambda,\mu)$ denote the set of special rim hook tableau of shape $\lambda$ and content $\mu$. See Fig \ref{srht}. Then
$$K^{-1}_{\mu,\lambda}=\sum_{T\in \text{SRHT}(\lambda,\mu)}\sgn(T).$$

\begin{figure}
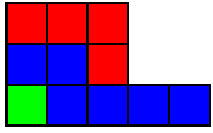

    \centering
    \begin{ytableau}
*(red) & *(red)  &*(red)  \\
*(blue)  & *(blue)  &*(red)\\
*(green)& *(blue)& *(blue)& *(blue)& *(blue)
\end{ytableau}
    \caption{An example of special rim hook tableau of shape $\lambda=(5,3,3)$ and content $\mu=(6,4,1)$, with the special rim hooks highlighted in red,blue, and green. The sign is $(-1)^2=1$.}
    \label{srht}
\end{figure}

Recall the character change of basis
$$s_\lambda=\sum_{\mu\vdash n}\chi^{\lambda}_\mu \frac{p_\mu}{z_\mu},$$
where $\chi^{\lambda}_\mu$ is the evaluation of the character of $\mathbb{S}^{\lambda}$ at a permutation of cycle type $\mu$ (this follows from the fact that the Frobenius map sends the indicator class function of the conjugacy class of cycle type $\mu$ to $p_\mu/z_\mu$). These coefficients are given by the Murnaghan-Nakayama rule 
$$\chi^\lambda_{\mu}=\sum_{T\in \text{RHT}(\lambda,\alpha)}\sgn(T),$$
where $\alpha$ is any \textit{fixed} rearrangement of $\mu$ and $\text{RHT}(\lambda,\alpha)$ is the set of rim hook tableau of shape $\lambda$ and content $\alpha$. This is the set of rim hook tableau where we do not require the hooks to be special (they can start in arbitrary columns of $\lambda$), but the hooks must be removed in the order specified by $\alpha$: $(|H_1|,\hdots,|H_{l(\mu})|)=(\alpha_1,\hdots,\alpha_{l(\mu)})$. See Table \ref{basisTable} for the remaining change of basis coefficients for basis pairs that we did not mention here.

\section{Transferring stability between symmetric function bases}\label{transferSec}

The aim of this section is to provide a careful analysis of how one can use the change of basis coefficients between two symmetric function basis to
deduce how stability properties of expansions in one symmetric function basis imply stability properties of expansions in another basis. Since we consider stabilizing expansions of homogeneous $F_n\in \Lambda^{(n)}$ as $n$ increases, it makes sense to restrict our attention to homogeneous basis for symmetric functions indexed by partitions. First, we generalize the notion of stability with respect to a basis.

\begin{defi}
     Let $(f_\lambda)_{\lambda}$ be a basis for $\Lambda$ with $f_\lambda\in \Lambda^{(|\lambda|)}$. Consider a sequence $(F_n)_n$ with $F_n\in \Lambda^{(n)}$ denoting the expansion in $f$ by $F_n=\sum_{\lambda}d_{\lambda,n}f_\lambda$. Define
     the $f$-\textbf{range} of $F_\bullet$, $\rg^f_\lambda(F_\bullet)$ to be the smallest $n$ such that $d_{\lambda,n}=d_{\lambda,n+k}$ for all $k\geq 0$ and let $\rg^f_\lambda(F_\bullet)=\infty$ when no such $n$ exists. We say $F_\bullet$ is $f$-\textbf{stable} if $\rg^f_\lambda(F_\bullet)<\infty$ for all $\lambda$.\parspace

     Similarly, define the \textbf{uniform} $f$-\textbf{range} of $F_\bullet$, $\rg^f(F_\bullet)=\max_{\lambda}\p{\rg^f_\lambda(F_\bullet)}$. We say $F_\bullet$ is \textbf{uniformally} $f$-\textbf{stable} if $\rg^f(F_\bullet)<\infty$.\parspace

     Define the \textbf{$f$-weight} of $F_\bullet$ 
     $$\text{wt}_f(F_\bullet):=\max\{|\lambda|:~\exists\, n,~d_{\lambda,n}\neq 0\}.$$
\end{defi}

It is easy to see that $F_\bullet$ is uniformally $f$-stable if and only if it is $f$-stable and $\wt_f(F_\bullet)<\infty$.

\begin{defi}
    Given a pair of symmetric function basis $(g_\mu)_{\mu}$, $(f_\mu)_{\mu}$. We say that \textbf{(uniform) stability transfers} from $f$ to $g$, if whenever a sequence $(F_n)_n$ is (uniformally) $f$-stable, then it is also (uniformally) $g$-stable. We say there is \textbf{weak stability transfer} from $f$ to $g$, if whenever $(F_n)_n$ is uniformally $f$-stable, then it is $g$-stable. We say \textbf{finite weight transfers} from $f$ to $g$, if whenever $(F_n)_n$ has $\text{wt}_f(F_\bullet)<\infty$, then $\text{wt}_g(F_\bullet)<\infty$.
\end{defi}

Let $(g_\mu)_{\mu}$, $(f_\mu)_{\mu}$ be a pair of basis with change of basis coefficients $A_{\lambda,\mu}$, i.e.
$$f_\lambda=\sum_{\mu\vdash |\lambda|}A_{\lambda,\mu}g_\mu,$$
and fix the convention that $A_{\lambda[n],\mu[n]}=0$, whenever $\lambda[n]$ or $\mu[n]$ is not defined. Let $N^{f\rightarrow g}_{\lambda,\mu}$ be the smallest $n$ such that $A_{\lambda[n],\mu[n]}=A_{\lambda[n+k],\mu[n+k]}$ for all $k\geq 0$ and let $N^{f\rightarrow g}_{\lambda,\mu}=\infty$ if no such $n$ exists. Define
$$J^{f\rightarrow g}_{\mu}:=\{\lambda:\exists\, n,~A_{\lambda[n],\mu[n]}\neq 0\},$$
$$I^{f\rightarrow g}_{\lambda}:=\{\mu:\exists\, n,~A_{\lambda[n],\mu[n]}\neq 0\}.$$
The reason for writing $f\rightarrow g$ in the superscript and not $g\rightarrow f$ is that when studying stability transfer from $f$ to $g$, we actually have to look at expansions of $f$ in terms of $g$ as we will see now.

\begin{thm}\label{stabTransferResult}
    Let $(g_\mu)_{\mu}$, $(f_\lambda)_{\lambda}$ be a pair of homogeneous basis with change of basis coefficients $A_{\lambda,\mu}$. Consider the following three conditions:
\begin{enumerate}
    \item $N^{f\rightarrow g}_{\lambda,\mu}<\infty$, for all $\lambda,\mu$
    \item $|J^{f\rightarrow g}_\mu|<\infty$, for all $\mu$
    \item $|I^{f\rightarrow g}_\lambda|<\infty$, for all $\lambda$
\end{enumerate}

Then
\begin{itemize}
    \item There is \underline{finite weight transfer} from $f$ to $g$ if and only if condition \underline{3 holds}
    \item There is \underline{weak stability transfer} from $f$ to $g$ if and only if condition \underline{1 holds}
    \item There is \underline{stability transfer} from $f$ to $g$ if and only if conditions \underline{1,2 hold}
    \item There is \underline{uniform stability transfer} from $f$ to $g$ if and only if conditions \underline{1,3 hold}
\end{itemize}
\end{thm}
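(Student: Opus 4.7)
The plan is to reduce every implication to the identity
$$d^g_{\mu,n} = \sum_\lambda d^f_{\lambda,n}\, A_{\lambda[n], \mu[n]},$$
obtained by expanding each $f_{\lambda[n]}$ appearing in $F_n$ into the $g$-basis. Each of the four equivalences then has a forward direction proved by analyzing this sum under the relevant hypotheses, and a converse direction proved by exhibiting an explicit test sequence.

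For the sufficient directions I would proceed in parallel. If $\wt_f(F_\bullet) < \infty$, only finitely many $\lambda$ contribute nonzero terms, so condition 3 bounds the support of $d^g_{\cdot,n}$ inside the finite union $\bigcup_\lambda I^{f\to g}_\lambda$, giving finite weight transfer. If $F_\bullet$ is uniformly $f$-stable, each $d^f_{\lambda,n}$ is eventually constant and condition 1 makes each $A_{\lambda[n],\mu[n]}$ eventually constant, so the finite sum stabilizes, yielding weak stability transfer. If $F_\bullet$ is merely $f$-stable, then for each fixed $\mu$ condition 2 truncates the sum to $\lambda \in J^{f\to g}_\mu$, a finite set, and condition 1 stabilizes each summand, yielding stability transfer. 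Finally, combining the finite-weight and weak-stability arguments and invoking the observation that $g$-stable plus $\wt_g<\infty$ equals uniform $g$-stability gives uniform stability transfer from conditions 1 and 3.

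For the necessary directions I would use test sequences. The simple choice $F_n = f_{\lambda_0[n]}$ satisfies $d^g_{\mu,n} = A_{\lambda_0[n],\mu[n]}$: failure of condition 1 at some $(\lambda_0, \mu_0)$ immediately destroys weak stability transfer, and failure of condition 3 at $\lambda_0$ forces $\wt_g(F_\bullet) = \infty$, destroying both finite weight transfer and uniform stability transfer. The main obstacle is the necessity of condition 2 for stability transfer, since here I must produce an $f$-stable but not uniformly $f$-stable sequence whose $g$-coefficients fail to stabilize at a single $\mu_0$. Given $|J^{f\to g}_{\mu_0}| = \infty$, I would select distinct $\lambda_i \in J^{f\to g}_{\mu_0}$ with $n_i$ satisfying $A_{\lambda_i[n_i], \mu_0[n_i]} \neq 0$; since $n_i \geq |\lambda_i| + (\lambda_i)_1$ and only finitely many partitions have bounded size, the $n_i$ must be unbounded, so after passing to a subsequence we may assume $n_{i+1} > n_i + 1$. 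Setting $F_n := \sum_i [n = n_i]\, f_{\lambda_i[n_i]}$ makes each $d^f_{\lambda_i, \bullet}$ supported at a single point and hence stable at $0$, while $d^g_{\mu_0, n}$ takes the nonzero value $A_{\lambda_i[n_i], \mu_0[n_i]}$ at $n = n_i$ and vanishes at $n = n_i + 1$, so it does not stabilize. This construction is the one place where the distinction between $f$-stability and uniform $f$-stability must genuinely be exploited; the remaining implications are bookkeeping around the key identity.
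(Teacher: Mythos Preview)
Your proposal is correct and follows essentially the same approach as the paper: the same key identity $d^g_{\mu,n} = \sum_\lambda d^f_{\lambda,n}\, A_{\lambda[n],\mu[n]}$, the same test sequence $F_n = f_{\lambda_0[n]}$ for conditions 1 and 3, and the same scattered-support construction $F_{n_i} = f_{\lambda_i[n_i]}$ with gaps $n_{i+1} > n_i + 1$ for the necessity of condition 2. The only cosmetic difference is that the paper allows the $\lambda^{(i)}$ to repeat and argues the $n^{(i)}$ can be spaced out because only finitely many $\lambda$ exist for each $n$, whereas you take the $\lambda_i$ distinct from the start and deduce unboundedness of $n_i$ from $n_i \geq |\lambda_i| + (\lambda_i)_1$; both work.
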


\begin{proof}
\textit{(finite weight transfer from $f$ to $g$)}: If condition 3 failed for some $\lambda$, then 
$$F_n:=\begin{cases}
    f_{\lambda[n]},&n\geq |\lambda|+\lambda_1\\
    0,&\text{else}
\end{cases}$$
would have finite $f$-weight but $F_n=\sum_{\mu}A_{\lambda[n],\mu[n]}g_{\mu[n]}$ would have infinite $g$-weight. On the other hand, if condition 3 holds, then any $F_n$ with finite $f$-weight, only supports $f_{\lambda[n]}$ for $|\lambda|<\wt_f(F_\bullet)$. By condition 3, $F_n$ can only support $g_{\mu[n]}$ for some $n$ if $\mu\in \bigcup_{|\lambda|\leq \wt_f(F_\bullet)}I^{f\rightarrow g}_\lambda$, a finite set. Hence 
\begin{equation}\label{3:ineq}
    \wt_g(F_\bullet)\leq \max\left\{|\mu|:~\mu\in I^{f\rightarrow g}_\lambda,~|\lambda|\leq \wt_f(F_\bullet)\right\}<\infty.
\end{equation}

\textit{(weak stability transfer from $f$ to $g$)}: The necessity of condition 1 follows using the uniformally $f$-stable sequence $F_n$ defined in the previous part.\parspace

Suppose we have a uniformally $f$-stable sequence 
\begin{align*}
F_n&=\sum_{|\lambda|\leq \wt_f(F_\bullet)}d_{\lambda,n}f_{\lambda[n]}\\
&=\sum_{\mu}\p{\sum_{|\lambda|\leq \wt_f(F_\bullet)}d_{\lambda,n}A_{\lambda[n],\mu[n]}}g_{\mu[n]}.
\end{align*}
Then the coefficient of $g_{\mu[n]}$ in $F_n$: 
$$\sum_{|\lambda|\leq \wt_f(F_\bullet)}d_{\lambda,n}A_{\lambda[n],\mu[n]}$$
stabilizes once $n\geq \rg^f_\lambda(F_\bullet)$ and $n\geq N^{f\rightarrow g}_{\lambda,\mu}$ for $|\lambda|\leq \wt_f(F_\bullet)$. So we get an upper bound on the corresponding range for the $g$ expansion
\begin{equation}\label{1:ineq}
    \rg^g_\mu(F_\bullet)\leq \max\p{\rg^f(F_\bullet),~\max\left\{N^{f\rightarrow g}_{\lambda,\mu}:~|\lambda|\leq \wt_f(F_\bullet)\right\}}<\infty
\end{equation}

\textit{(stability transfer from $f$ to $g$)}: If for some pair $\lambda,\mu$, the value $A_{\lambda[n],\mu[n]}$ did not stabilize, then the $F_n$ defined in the first part would be $f$-stable, but not $g$-stable since the coefficient of $g_{\mu[n]}$ in $F_n$ is $A_{\lambda[n],\mu[n]}$. This shows the necessity of condition 1.\parspace

Now suppose for contradiction that stability transfers from $f$ to $g$ but condition 2 is not true. Then for some $\mu$, $A_{\lambda[n],\mu[n]}\neq 0$ for infinitely many $\lambda$, $n$ pairs. Pick a sequence of such $\lambda$, $n$ values, calling them $\lambda^{(1)},\lambda^{(2)},\dots$ and $n^{(1)},n^{(2)},\dots$ Without loss of generality we can take the $n^{(i)}+1<n^{(i+1)}$ since for a given $n$, there are only finitely many partitions $\lambda$ for which $\lambda[n]$ is defined. Now construct $(F_n)_n$ by letting $F_n=0$ unless $n=n^{(i)}$ in which case we let $F_n=f_{\lambda^{(i)}[n^{(i)}]}$. The sequence is $f$-stable, since any fixed $\lambda$ shows up in the sequence $(\lambda^{(i)})_i$ at most once and so
the coefficient of  $f_{\lambda[n]}$ is eventually 0. However, the coefficient of $g_{\mu[n]}$ oscillates between being zero and non-zero by the choice of $n^{(i)}$. This contradicts stability transfer implying the necessity of condition 2.\parspace

Now we argue that the conditions are sufficient for stability transfer. Let $(F_n)_n$ be an $f$-stable sequence with expansion in $f$
\begin{align*}
F_n&=\sum_{\lambda}d_{\lambda,n}f_{\lambda[n]}=\sum_{\mu}\p{\sum_{\lambda\in J^{f\rightarrow g}_\mu}d_{\lambda,n}A_{\lambda[n],\mu[n]}}g_{\mu[n]},
\end{align*}
Then coefficient of $g_{\mu[n]}$,
$\sum_{\lambda\in J^{f\rightarrow g}_\mu}d_{\lambda,n}A_{\lambda[n],\mu[n]}$, stabilizes once $n\geq \rg^f_\lambda(F_\bullet),~N^{f\rightarrow g}_{\lambda,\mu}$ for all $\lambda\in J^{f\rightarrow g}_{\mu}$. Hence
\begin{equation}\label{12:ineq}
    \rg^g_\mu(F_\bullet)\leq \max\left\{\max\p{\rg^f_\lambda(F_\bullet),~N^{f\rightarrow g}_{\lambda,\mu}}:~\lambda\in J^{f\rightarrow g}_\mu\right\}<\infty
\end{equation}

implying that $F_\bullet$ is $g$-stable.
\newline\newline
\textit{(uniform stability transfer from $f$ to $g$)}: The necessity of condition 1 follows from the weak stability transfer case. To see the necessity of condition 3, take the uniformally $f$-stable sequence from the first case. To have uniform $g$-stability, the coefficient of $g_{\mu[n]}$, $A_{\lambda[n],\mu[n]}$, must be zero for all but finite $\mu$. But this also implies that for all but finite $\mu$, $A_{\lambda[n],\mu[n]}=0$ for all $n\geq 0$.\parspace

In the other direction, we have that if $F_\bullet$ is uniformally $f$-stable, then condition 1 implies that $F_\bullet$ is $g$-stable and condition 3 implies that $F_\bullet$ has finite $g$-weight. Hence, $F_\bullet$ is uniformally $g$-stable and by putting inequalities (\ref{3:ineq}) and (\ref{1:ineq}) together, we get
\begin{equation}\label{13:ineq}
    \rg^g(F_\bullet)\leq \max\p{\rg^f(F_\bullet),~\max\left\{N^{f\rightarrow g}_{\lambda,\mu}:~\mu\in I^{f\rightarrow g}_{\lambda},~|\lambda|\leq \wt_f(F_\bullet)\right\}}<\infty.
\end{equation}    
\end{proof}

The dual nature of conditions 2 and 3 is captured in this following corollary.
\begin{cor}\label{23duality}
    Suppose we have a pair of homogeneous bases $(f_\lambda)_\lambda$, $(g_\lambda)_\lambda$ with corresponding dual bases $(f^*_\lambda)_\lambda,(g^*_\lambda)_\lambda$. Then the following are equivalent:
\begin{itemize}
    \item Stability transfers from $f$ to $g$
    \item Uniform stability transfers from $g^*$ to $f^*$
\end{itemize}
\end{cor}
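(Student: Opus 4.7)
The plan is to use the adjoint change of basis identity recalled in the preliminaries: if $f_\lambda = \sum_\mu A_{\lambda,\mu}\, g_\mu$, then $g^*_\mu = \sum_\lambda A_{\lambda,\mu}\, f^*_\lambda$. So the matrix of the change of basis $g^* \to f^*$ is just the transpose of the matrix for $f \to g$, with the same entries $A_{\lambda,\mu}$ (only the roles of the two indices are swapped). My first step is to write this down explicitly and observe that, letting $B_{\mu,\lambda} := A_{\lambda,\mu}$ denote the coefficients for $g^* \to f^*$, we have
\[
N^{g^*\to f^*}_{\mu,\lambda} = N^{f\to g}_{\lambda,\mu}, \qquad
J^{g^*\to f^*}_{\lambda} = I^{f\to g}_{\lambda}, \qquad
I^{g^*\to f^*}_{\mu} = J^{f\to g}_{\mu},
\]
directly from the definitions.

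With this bookkeeping in hand, translating conditions is now mechanical. Condition 1 is symmetric under the interchange of the two indices, so condition 1 for $g^* \to f^*$ is the same statement as condition 1 for $f \to g$. The interesting swap is that condition 3 for $g^* \to f^*$ (finiteness of $I^{g^*\to f^*}_{\mu}$ for every $\mu$) becomes exactly condition 2 for $f \to g$ (finiteness of $J^{f\to g}_{\mu}$ for every $\mu$).

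Now I apply Theorem \ref{stabTransferResult} to both sides of the claimed equivalence. By the third bullet of the theorem, stability transfer from $f$ to $g$ is equivalent to conditions 1 and 2 holding for the pair $f \to g$. By the fourth bullet, uniform stability transfer from $g^*$ to $f^*$ is equivalent to conditions 1 and 3 holding for the pair $g^* \to f^*$. The translation above turns the second list of conditions into the first, so the two statements are indeed equivalent.

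The only point to be careful about is the index convention: conditions 2 and 3 of Theorem \ref{stabTransferResult} are not symmetric in the two indices (one controls ``for every column index, finite support in rows'' and the other ``for every row index, finite support in columns''), so transposing the matrix does exchange them. Once this bookkeeping is pinned down, no further calculation is required, and the duality between conditions 2 and 3 advertised before the corollary is exactly what drives the proof.
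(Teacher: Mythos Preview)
Your proof is correct and follows essentially the same route as the paper's: use the adjoint change-of-basis identity to see that the matrix for $g^*\to f^*$ is the transpose of the one for $f\to g$, observe that condition~1 is invariant under transposition while conditions~2 and~3 swap, and then invoke Theorem~\ref{stabTransferResult}. Your write-up is slightly more explicit in recording the identities for $N$, $J$, and $I$, but the argument is the same.
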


\begin{proof}
     Let $A=(A_{\lambda,\mu})_\mu$ be the change of basis matrix from $g$ to $f$: $f_\lambda=\sum_\mu A_{\lambda,\mu}g_\mu$. Then the adjoint $A^T$ gives the change of basis 
$$g^*_\mu=\sum_{\lambda} A^T_{\mu,\lambda}f^*_\lambda=\sum_{\lambda}A_{\lambda,\mu}f^*_\lambda.$$
It is clear that condition 1 holds for $A$ if and only if it holds for $A^T$. The claim follows since condition 2 for the $A_{\lambda[n],\mu[n]}$ is equivalent to condition 3 holding for $A^T_{\lambda[n],\mu[n]}=A_{\mu[n],\lambda[n]}$. 
\end{proof}

\subsection{Application to the classical bases}

In this section, we consider the classical symmetric function basis, $m_\lambda,s_\lambda,h_\lambda,e_\lambda,p_\lambda,p_\lambda/z_\lambda$ and determine which of conditions 1,2,3 from Theorem \ref{stabTransferResult} hold for each ordered pair. Table \ref{basisTable} summarizes these computations.

\begin{prop}\label{genTransfer}
    Let $F=(F_n\in \Lambda^{(n)})_n$. Then the following statements hold:
    \begin{itemize}
        \item $F$ is $s$-stable if and only if it is $m$-stable
        \item $F$ is uniformally $s$-stable if and only if it is uniformally $h$-stable (dual to previous statement by Corollary \ref{23duality})
        \item $F$ is $h$-stable if and only if it is $p/z$-stable
        \item $F$ is uniformally $m$-stable if and only if it is uniformally $p$-stable (dual to previous statement by Corollary \ref{23duality})
    \end{itemize}

    We have bounds on the following relevant $N^{f\rightarrow g}_{\lambda,\mu}$ values for when $|\lambda|\leq |\mu|$:
    \begin{align}\label{Nbounds}
        N^{s\rightarrow m}_{\lambda,\mu},~N^{m\rightarrow s}_{\lambda,\mu}&\leq 2|\mu|,\\
        N^{p/z\rightarrow h}_{\lambda,\mu},~N^{h\rightarrow p/z}_{\lambda,\mu}&\leq 2|\mu|+1
    \end{align}
    as well as the following values for the $J^{f\rightarrow g}_{\mu}$ and $I^{f\rightarrow g}_{\lambda}$
    \begin{align}
        J^{s\rightarrow m}_{\mu},~J^{m\rightarrow s}_{\mu},J^{p/z\rightarrow h}_{\mu},~J^{h\rightarrow p/z}_{\mu}&\subseteq\{\lambda:~|\lambda|\leq |\mu|\}\label{Jbounds}\\
        I^{p\rightarrow m}_{\lambda},~I^{m\rightarrow p}_{\lambda},I^{h\rightarrow s}_{\lambda},~I^{s\rightarrow h}_{\lambda}&\subseteq\{\mu:~|\mu|\leq |\lambda|\}\label{Ibounds}
    \end{align}
\end{prop}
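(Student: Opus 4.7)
The strategy is to apply Theorem \ref{stabTransferResult} to the first and third bullet-point equivalences, and to deduce the second and fourth (the uniform ones) from Corollary \ref{23duality}. For each non-uniform biconditional I must verify conditions $1$ and $2$ of Theorem \ref{stabTransferResult} in \emph{both} directions of the change of basis.

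For $s\leftrightarrow m$, the change-of-basis matrices $K$ (Kostka) and $K^{-1}$ (inverse Kostka) are both upper triangular in the dominance order, and comparing first parts of $\lambda[n]$ and $\mu[n]$ gives $|\lambda|\leq|\mu|$; this simultaneously yields condition $2$ and the relevant $J$-inclusions from (\ref{Jbounds}). For condition $1$, I would use the SSYT description of $K_{\lambda[n],\mu[n]}$: once $n\geq|\mu|+\lambda_1$ the $n-|\mu|$ forced ones at the start of row $1$ of $\lambda[n]$ separate that row from rows $2,3,\ldots$ in the column strict-inequality constraints, so the SSYT count reduces to an enumeration on a fixed disconnected skew shape with content $\mu$, independent of $n$. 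The SRHT formula for $K^{-1}_{\lambda[n],\mu[n]}$ admits a parallel argument: past the same threshold, the unique hook of length $n-|\mu|$ must occupy row $1$ in an essentially determined way, leaving a residual signed SRHT enumeration that is $n$-independent. Since $\lambda_1\leq|\lambda|\leq|\mu|$, this gives $N^{s\to m}_{\lambda,\mu},~N^{m\to s}_{\lambda,\mu}\leq 2|\mu|$.

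For $h\leftrightarrow p/z$ I would run the same program using the brick-tableau formulas. A nonzero weight $w(B_{\lambda[n],\mu[n]})$ requires every brick of $\mu[n]$ to fit in some row of $\lambda[n]$; matching the largest brick with the largest row forces $|\lambda|\leq|\mu|$, giving condition $2$ and the corresponding $J$-inclusions. For condition $1$, once $n\geq 2|\mu|+1$ the bricks of $\mu[n]$ are pairwise distinct and the largest brick (size $n-|\mu|$) fits in row $1$ only, so splitting tilings by whether this brick sits at the right end of row $1$ gives $w(B_{\lambda[n],\mu[n]})=(n-|\mu|)T_I+T_{II}$ with $T_I,T_{II}$ independent of $n$. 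An averaging identity on multiset orderings yields $T_{II}=(|\mu|-|\lambda|)T_I$, which combines with $z_{\lambda[n]}=(n-|\lambda|)z_\lambda$ to produce the cancellation $((n-|\mu|)T_I+T_{II})/z_{\lambda[n]}=T_I/z_\lambda$; this is manifestly $n$-independent and gives $N^{p/z\to h}_{\lambda,\mu},~N^{h\to p/z}_{\lambda,\mu}\leq 2|\mu|+1$.

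With both biconditionals proved, the uniform equivalences follow by Corollary \ref{23duality} applied to the dual pairs $(h,m)$, $(s,s)$, and $(p/z,p)$. The $I$-inclusions in (\ref{Ibounds}) are the duals of the $J$-inclusions already established: the transpose identity $A_{\lambda,\mu}=A^T_{\mu,\lambda}$ yields $I^{f\to g}_\lambda=J^{g^*\to f^*}_\lambda$, so each listed $I$-inclusion matches one of the $J$-inclusions from (\ref{Jbounds}). The main technical obstacle lies in the condition-$1$ arguments: the SSYT reduction is a clean bijection, but the $K^{-1}$ case requires careful tracking of signs as the forced big hook is placed along row $1$, and the brick-tableau case crucially relies on the identity $T_{II}=(|\mu|-|\lambda|)T_I$ to absorb the $n$-dependent weight of the large brick into $z_{\lambda[n]}$.
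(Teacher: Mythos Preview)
Your plan is correct and follows the paper's own route: Lemma \ref{kostkaStable} handles $s\leftrightarrow m$ via exactly the SSYT/SRHT bijections you describe, and the paper's brick-tableau computation for $p/z\to h$ is a repackaging of your $T_I,T_{II}$ split (the paper instead sums over the subset $\nu\subseteq\mu$ landing in row~1 and uses a cyclic-shift averaging to reach the same $(n-|\lambda|)$ factor that cancels against $z_{\lambda[n]}$). Two small slips to fix when you write it out: in the $K^{-1}$ argument the large hook has length $n-|\lambda|$ (the content comes from $\lambda[n]$), not $n-|\mu|$; and the bricks of $\mu[n]$ need not be pairwise distinct---you only use that $n-|\mu|$ strictly exceeds every part of $\mu$, which is what $n\ge 2|\mu|+1$ guarantees.
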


\begin{table}[ht!]
\centering
\begin{tabular}{|c|c|c|c|c|c|c|}
\hline
 & $m_{\lambda}$ & $s_{\lambda}$ & $h_{\lambda}$ & $e_{\lambda}$ & $p_{\lambda}$ & $p_{\lambda}/z_{\lambda}$ \\ \hline
 
$m_{\mu}$ &  & $K_{\lambda,\mu}$ : 1,2 & $IM_{\lambda,\mu}$ : 1 & $BM_{\lambda,\mu}$ : 1 & $|OB_{\lambda,\mu}|$ : 1,3 & $../z_\lambda$ : 3 \\ \hline

$s_{\mu}$ & $K^{-1}_{\lambda,\mu}$ : 1,2 &  & $K_{\mu,\lambda}$ : 1,3 & $K_{\mu',\lambda}$ : 1 &  $\chi^\mu_\lambda$ : 1 & $../z_\lambda$ : - \\ \hline

$h_{\mu}$ & $IM^{-1}_{\lambda,\mu}$ : - & $K^{-1}_{\mu,\lambda}$ : 1,3 &  & \makecell{$(-1)^{n-l(\mu)}|B_{\mu,\lambda}|$ \\
: 2} & \makecell{$(-1)^{l(\lambda)-l(\mu)}w(B_{\mu,\lambda})$ \\
: 2} & $../z_\lambda$ : 1,2 \\ \hline

$e_{\mu}$ & $BM^{-1}_{\lambda,\mu}$ : - & $K^{-1}_{\mu,\lambda'}$ : - & \makecell{$(-1)^{n-l(\mu)}|B_{\mu,\lambda}|$\\
: 2} &  & \makecell{$(-1)^{n-l(\mu)}w(B_{\mu,\lambda})$\\
: 2} & $../z_\lambda$ : 2\\ \hline

$p_{\mu}$ & \makecell{$(-1)^{l(\mu)-l(\lambda)}\frac{w(B_{\lambda,\mu})}{z_\mu}$\\ : 1,3} & $\frac{\chi^\lambda_\mu}{z_\mu}$ : - & $\frac{|OB_{\mu,\lambda}|}{z_\mu}$ : 2 & \makecell{$(-1)^{n-l(\mu)}\frac{|OB_{\mu,\lambda}|}{z_\mu}$\\
: 2} &  & $\frac{\delta_{\lambda,\mu}}{z_\lambda}$ : 2,3 \\ \hline

$p_{\mu}/z_{\mu}$ & \makecell{$(-1)^{l(\mu)-l(\lambda)}w(B_{\lambda,\mu})$ \\
: 3}  & $\chi^\lambda_\mu$ : 1 & $|OB_{\mu,\lambda}|$ : 1,2 & \makecell{$(-1)^{n-l(\mu)}|OB_{\mu,\lambda}|$\\
: 2} & $\delta_{\lambda,\mu}z_\mu$ : 2,3 &  \\ \hline
\end{tabular}
\caption{Verification of which conditions from Theorem \ref{stabTransferResult} hold for pairs of classical symmetric function basis. The $(f_\lambda,g_\mu)$-entry indicates the change of basis coefficients from $f_\lambda=\sum_\mu A_{\lambda,\mu}g_\mu$ as well as listing which of the conditions hold. We record - if all conditions 1,2,3 fail. Diagonal entries are left blank.}
\label{basisTable}
\end{table}

As a warm up, we will analyze the Kostka and inverse Kostka numbers to show (\ref{Nbounds}-\ref{Ibounds}) for change of basis between the Schur basis and the monomial basis.

\begin{lem}\label{kostkaStable}
Once $n\geq  2|\mu|$
$$K_{\lambda[n],\mu[n]}=\begin{cases}\langle s_\lambda h_{|\mu|-|\lambda|},h_\mu\rangle,&|\mu|\geq |\lambda|\\
0,&\text{else}\end{cases}$$
and in fact the bound is sharp in the sense that
$$\max_{\lambda}(N^{s\rightarrow m}_{\lambda,\mu})=2|\mu|$$
We have that $J^{s\rightarrow m}_\mu\subseteq \{\lambda:~|\lambda|\leq |\mu|\}$, but $I^{s\rightarrow m}_{\lambda}$ is infinite since it contains all $\mu=(1^k)$.\parspace

Analogously, the inverse Kostka numbers have the following stability property. We have that $K^{-1}_{\lambda[n],\mu[n]}$ stabilize once $n\geq 2|\mu|$, so
$$\max_{\lambda}(N^{m \rightarrow s}_{\lambda,\mu})\geq 2|\mu|,$$
but the bound may not be sharp. We also have that $K^{-1}_{\lambda[n],\mu[n]}=0$ is zero unless $|\mu|\geq |\lambda|$ so $J^{m\rightarrow s}_{\mu}\subseteq \{\lambda:~|\lambda|\leq |\mu|\}$ but $I^{m\rightarrow s}_{\lambda}$ is infinite since it contains all $\mu=(1^k)$.
\end{lem}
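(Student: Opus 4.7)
The plan is to split the lemma into a combinatorial argument on semistandard tableaux for the Kostka half and a linear-algebraic inversion of the stable Kostka matrix for the inverse-Kostka half.

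For the Kostka numbers, I would start from $K_{\lambda[n],\mu[n]}=\langle s_{\lambda[n]},h_{\mu[n]}\rangle$ interpreted as the count of $\text{SSYT}(\lambda[n],\mu[n])$. Column-strictness forces all $n-|\mu|$ copies of $1$ into the leftmost cells of row $1$, and once $n\geq 2|\mu|$ (which implies $n-|\mu|\geq\lambda_1$ when $|\lambda|\leq|\mu|$) the remaining $|\mu|-|\lambda|$ cells of row $1$ sit strictly to the right of every lower row. The tableau then decouples into a weakly increasing row of length $|\mu|-|\lambda|$ together with an SSYT of shape $\lambda$, both on the alphabet $\{2,3,\dots\}$ with combined content $\mu$, and this disjoint-union count equals $\langle s_{(|\mu|-|\lambda|)}s_\lambda,h_\mu\rangle=\langle s_\lambda h_{|\mu|-|\lambda|},h_\mu\rangle$, establishing the formula; vanishing for $|\lambda|>|\mu|$ is the pigeonhole obstruction. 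Sharpness is witnessed by $\lambda=(|\mu|)$: the shape $\lambda[n]=(n-|\mu|,|\mu|)$ is valid precisely when $n\geq 2|\mu|$, so $K_{\lambda[n],\mu[n]}$ jumps from $0$ to $\langle s_{(|\mu|)},h_\mu\rangle=1$ exactly at $n=2|\mu|$, forcing $N^{s\rightarrow m}_{(|\mu|),\mu}=2|\mu|$. The inclusion $J^{s\rightarrow m}_\mu\subseteq\{\lambda:|\lambda|\leq|\mu|\}$ follows immediately, and $I^{s\rightarrow m}_\lambda$ contains every $(1^k)$ with $k\geq|\lambda|$ because $\langle s_\lambda h_{k-|\lambda|},h_1^k\rangle=\sum_\nu c_\nu f^\nu$ (a Pieri expansion times standard-tableau counts) is manifestly positive.

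For the inverse Kostka half, I would introduce the stable Kostka matrix $K^*$ with entries $K^*_{\lambda,\mu}=\langle s_\lambda h_{|\mu|-|\lambda|},h_\mu\rangle$ for $|\lambda|\leq|\mu|$ and $0$ otherwise. It is block upper-triangular with respect to $|\cdot|$, with diagonal blocks equal to the classical Kostka matrices (unitriangular in dominance, hence invertible), so $K^*$ restricted to partitions of size $\leq|\mu|$ is a finite invertible matrix. In the level-$n$ identity $\sum_{\nu\vdash n}K_{\lambda[n],\nu}K^{-1}_{\nu,\mu[n]}=\delta_{\lambda[n],\mu[n]}$, dominance upper-triangularity of both $K$ and $K^{-1}$ forces $n-|\mu|\leq\nu_1\leq n-|\lambda|$ on every nonzero summand, so the contributing $\nu$ are exactly the $\sigma[n]$ with $|\sigma|\leq|\mu|$, and the identity is vacuous unless $|\lambda|\leq|\mu|$. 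For $n\geq 2|\mu|$, the first half of the lemma lets me replace $K_{\lambda[n],\sigma[n]}$ by $K^*_{\lambda,\sigma}$, converting the identity into the $n$-independent finite system $\sum_{|\sigma|\leq|\mu|}K^*_{\lambda,\sigma}y_\sigma=\delta_{\lambda,\mu}$ with unique solution $y_\sigma=(K^*)^{-1}_{\sigma,\mu}$; hence $K^{-1}_{\sigma[n],\mu[n]}$ is $n$-independent from $n=2|\mu|$ onward, and $J^{m\rightarrow s}_\mu\subseteq\{\lambda:|\lambda|\leq|\mu|\}$ drops out of the same range restriction.

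For infinitude of $I^{m\rightarrow s}_\lambda$, I would invoke the classical expansion $e_n=\sum_{\rho\vdash n}(-1)^{n-l(\rho)}\binom{l(\rho)}{m_1(\rho),m_2(\rho),\dots}h_\rho$ obtained from the dual Jacobi-Trudi determinant, which reads $K^{-1}_{\rho,(1^n)}\neq 0$ for every $\rho\vdash n$. Setting $n=k+1$ and $\mu=(1^k)$ so that $\mu[n]=(1^{k+1})$ yields $K^{-1}_{\lambda[k+1],\mu[k+1]}\neq 0$ for all $k\geq|\lambda|+\lambda_1-1$, placing infinitely many of the partitions $(1^k)$ inside $I^{m\rightarrow s}_\lambda$. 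I expect the main obstacle to be the inverse-Kostka stabilization step: the crucial input is that $K^{-1}$ (not just $K$) is upper-triangular in dominance order, which is what restricts $\nu$ in the level-$n$ identity to the stabilized regime and allows the first half of the lemma to be invoked cleanly; without that observation one cannot control the summation range over $\nu\vdash n$ and no clean stable limit can be extracted.
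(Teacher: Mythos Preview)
Your Kostka-number argument is essentially the paper's own: both pigeonhole away the case $|\lambda|>|\mu|$, both observe that the $n-|\mu|$ copies of $1$ saturate the leftmost bottom-row cells once $n\geq 2|\mu|$, and both read off the stable value $\langle s_\lambda h_{|\mu|-|\lambda|},h_\mu\rangle$ from the resulting disjoint-union skew shape; the sharpness witness $\lambda=(|\mu|)$ and the infinitude of $I^{s\rightarrow m}_\lambda$ via $\mu=(1^k)$ are also identical.

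Your inverse-Kostka argument, however, is genuinely different from the paper's and arguably cleaner. The paper works directly with the Eg\u{e}cio\u{g}lu--Remmel special rim hook tableaux: it builds a sign-preserving bijection $\phi_n:\text{SRHT}(\mu[n],\lambda[n])\to\text{SRHT}(\mu[n+1],\lambda[n+1])$ by extending the rim hook through the bottom-right cell, checks that for $n\geq 2|\mu|$ only the long hook of length $n-|\lambda|$ can reach that cell, and handles the boundary case $n-|\mu|=\lambda_1$ separately. Your route bypasses rim hooks entirely: you use dominance-triangularity of both $K$ and $K^{-1}$ to trap every contributing $\nu$ in the level-$n$ identity $KK^{-1}=I$ into the finite window $\{\sigma[n]:|\sigma|\leq|\mu|\}$, then invoke the already-proved Kostka stabilization to replace $K_{\lambda[n],\sigma[n]}$ by the $n$-free $K^*_{\lambda,\sigma}$, reducing to a fixed invertible linear system. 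This buys you a one-line proof of the vanishing for $|\lambda|>|\mu|$ (pure triangularity, no hook-length estimate) and avoids the edge-case analysis; the price is that your inverse-Kostka stabilization is not self-contained but leans on the Kostka half. Your infinitude argument for $I^{m\rightarrow s}_\lambda$ via the Jacobi--Trudi expansion of $e_n=s_{(1^n)}$ in the $h$-basis (which gives $K^{-1}_{\rho,(1^n)}\neq 0$ for every $\rho\vdash n$) is also different from, and more explicit than, the paper's bare assertion.
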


We note the similarity between the first half of the statement in Lemma \ref{kostkaStable} and Theorem 3.3 in \cite{colmenarejo}. The proof of this statement uses the same map on standard Young tableaux as defined in the proof of that theorem.

\begin{proof}
First we argue that if $|\mu|<|\lambda|$, then $K_{\lambda[n],\mu[n]}=0$ assuming $\lambda[n],\mu[n]$ are defined. We have that $n-|\mu|>n-|\lambda|$ and so the numbers of cells labeled 1 in any $T\in \text{SSYT}(\lambda[n],\mu[n])$ exceeds the size of the bottom row of $T$, which would yield a contradiction.\parspace

Thus, we consider the case $|\mu|\geq |\lambda|$. Let $K^{(n)}:=\text{SSYT}(\lambda[n],\mu[n])$. Define the following map $\kappa_n:K^{(n)}\rightarrow K^{(n+1)}$. Given a $T\in K^{(n)}$, slide the bottom row to the right by one cell and insert a new cell containing a 1 in the bottom left corner. See the following example:

$$\begin{ytableau}2&4&4\\1&1&3&3\end{ytableau}\,\longmapsto\, \begin{ytableau}2&4&4\\1&1&1&3&3\end{ytableau}$$

Clearly, the map is injective and well-defined, implying $K_{\lambda[n],\mu[n]}\leq K_{\lambda[n+1],\mu[n+1]}$. We show that the map is surjective for $n\geq 2|\mu|$. In this case,
$$n\geq |\lambda|+\lambda_1,|\mu|+\mu_1,$$
so that both $\lambda[n],\mu[n]$ will are well-defined. For any $T\in K^{(n+1)}$, the number of cells labeled 1 in $T$ is 
$$n+1-|\mu|>|\mu|\geq |\lambda|\geq \lambda_1$$
and these cells are all left justified in the bottom row. Removing the bottom left cell and sliding the remaining entries of the bottom row to the left still yields a valid tableau $T'\in K^{(n)}$ that maps to $T\in K^{(n+1)}$ since the first $\lambda_1$ cells of the bottom row of $T'$ are forced to be labeled 1 by the assumption on $n$. Hence, $K_{\lambda[n],\mu[n]}$ stabilizes for $n\geq 2|\mu|$.\parspace

For these sufficiently large $n$, one may take any
$T\in K^{(n)}$, remove all cells labeled 1 and decrement the remaining labels by 1 to get a semi-standard Young tableau of skew shape $\nu$ and content $\mu$, where $\nu$ can be viewed as disjoint union shapes $\lambda$ and $(|\mu|-|\lambda|)$. This bijection between elements of $K^{(n)}$ and skew tableau implies $K_{\lambda[n],\mu[n]}=K_{\nu,\mu}$, a skew-Kostka number. Then
$$K_{\lambda[n],\mu[n]}=K_{\nu,\mu}=\langle s_\nu,h_\mu\rangle=\langle s_\lambda h_{|\mu|-|\lambda|},h_\mu\rangle.$$
To see the sharpness of the bound take any $\mu$ and let $\lambda=(|\mu|)$. Then the stable value is $\langle s_\lambda,h_\mu\rangle =K_{(|\mu|),\mu}=1$ non-zero and requires $\lambda[n]$ to be defined, i.e. $n\geq |\lambda|+\lambda_1=2|\mu|$.\parspace

Now we focus on the inverse Kostka numbers, let $L^{(n)}:=\text{SRHT}(\mu[n],\lambda[n])$ be the set of special rim hook fillings of shape $\mu[n]$ and content $\lambda[n]$, so that
$$K^{-1}_{\lambda[n],\mu[n]}=\sum_{T\in L^{(n)}}\sgn(T).$$
If $|\mu|<|\lambda|$, then no special rim hook fillings exist. This is because there needs to exist a rim hook containing both a cell in the first row and the rightmost cell in the bottom row of $\mu[n]$. Hence, there has to exist a rim hook of length at least $n-|\mu|$. But the longest hook will have length $n-|\lambda|<n-|\mu|$.\parspace

Otherwise, we have $|\lambda|\leq |\mu|$. If $n\geq 2|\mu|$, then we have $n-|\mu|\geq |\mu|\geq \lambda_1$ which leads to two cases. For the first case, let $n-|\mu|>\lambda_1$.
We construct a map
$\phi_n:L^{(n)}\rightarrow L^{(n+1)}$ that extends the rim hook containing the rightmost cell of the bottom row, $c$, to contain the newly added cell, $c'
=\mu[n+1]/ \mu[n]$. Note that this map is clearly injective and preserves sign since no vertical crossings are introduced. See Fig \ref{invKostkaMap}.\parspace

\begin{figure}[ht!]
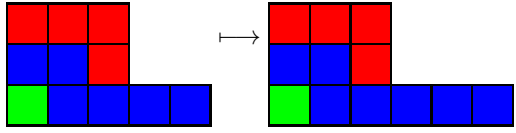

    \centering
    $$\begin{ytableau}
*(red) & *(red)  &*(red)  \\
*(blue)  & *(blue)  &*(red)\\
*(green)& *(blue)& *(blue)& *(blue)& *(blue)
\end{ytableau}\longmapsto \begin{ytableau}
*(red) & *(red)  &*(red)  \\
*(blue)  & *(blue)  &*(red)\\
*(green)& *(blue)& *(blue)& *(blue)& *(blue)& *(blue)
\end{ytableau}$$
    \caption{An application of $\phi_{11}$ to an element of $L^{(11)}$ with $\lambda[11]=(6,4,1)$ and $\mu[11]=(5,3,3)$. The sign of both special rim hook tableau is $-1$.}
    \label{invKostkaMap}
\end{figure}

We must check that the map is well-defined, i.e. that the content of the new special rim hook tableau is $\lambda[n+1]$. Within $T\in L^{(n)}$, only the special rim hook of length $n-|\lambda|$ is long enough to contain $c$ since the second longest hook has length $\lambda_1< n-|\mu|$. Thus $c'$ is added to the hook of length $n-|\lambda|$ yielding a special rim hook tableau with content $\lambda[n+1]$. It easily follows that the map, $\phi_n$, is surjective and so
$$K^{-1}_{\lambda[n],\mu[n]}=\sum_{T\in L^{(n)}}\sgn(T)=\sum_{T'\in L^{(n+1)}}\sgn(T')=K^{-1}_{\lambda[n+1],\mu[n+1]}.$$

In the remaining case $n-|\mu|=\lambda_1$, which combined with
$n-|\mu|\geq |\mu|\geq \lambda_1$ implies that $n=2|\mu|$.
Additionally, $|\lambda|\leq |\mu|$ implies $|\lambda|=\lambda_1$ and $\lambda[n]=(n-|\lambda|,\lambda_1)=(|\mu|,|\mu|)$. Then a special rim hook filling of shape $\mu[n+a]$ and content $(|\mu|+a,|\mu|)$ will have a hook of length $|\mu|+a$ occupying precisely the cells of the bottom row and $K^{-1}_{\lambda[n+a],\mu[n+a]}=K^{-1}_{(|\mu|),\mu}$, independent of $a\geq 0$.
\end{proof}

\begin{rem}
    It is possible for $\max_{\lambda}(N^{m\rightarrow s}_{\lambda,\mu})<2|\mu|$. For example one may take $\mu=(2,2)$ and verify that all $K^{-1}_{\lambda[n],\mu[n]}$ stabilize once $n\geq 7$. If $\mu$ is a hook shape however, then we do get $\max_{\lambda}(N^{m\rightarrow s}_{\lambda,\mu})=2|\mu|$ by considering $\lambda=(|\mu|)$.
\end{rem}

\begin{proof}[Proof of Proposition \ref{genTransfer} and verification of table entries]
We begin by handling the remaining entries of Table \ref{basisTable} involving Kostka or inverse Kostka numbers. The coefficients for the change of basis $e_{\lambda[n]}\rightarrow s_{\mu[n]}$, $K_{\mu[n]',\lambda[n]}$, are zero for sufficiently large $n$, but $K_{(1^n)',(n)}=1$ and $K_{(n)',(1^n)}=1$ implying conditions 3 and 2 fail. The inverse change of basis $s_{\lambda[n]}\rightarrow e_{\mu[n]}$, $K^{-1}_{\mu[n],\lambda[n]'}$, has the issue that $K^{-1}_{(n),(n)'}=(-1)^{n-1}$ does not stabilize and so condition 1 fails and one may also check that conditions 2 and 3 fail.\parspace

We observe that all entries having a sign of the form $(-1)^{n-l(\mu)}$ will oscillate in sign as $n$ grows and be non-zero if we let $\mu=\lambda=\emptyset$. Hence condition 1 fails for all such entries.\parspace

Now we check the entries involving brick tableaux, $B_{\mu,\lambda}$. Although a bit surprising it turns out that the change of basis coefficients for $p/z \rightarrow h$:
$$(-1)^{l(\lambda[n])-l(\mu[n])}\frac{w(B_{\mu[n],\lambda[n]})}{z_{\lambda[n]}}$$
satisfy properties 1 and 3. For the coefficient to be non-zero the largest part of size, $n-|\lambda|$, must be at least as great as the longest brick, $n-|\mu|$. Hence, the coefficient will be zero unless $|\lambda|\leq |\mu|$ and so $J^{p/z\rightarrow h}_{\mu}\subseteq \{\lambda:~|\lambda|\leq |\mu|\}$ implies that condition 3 holds. To see the stabilization of the coefficient, we take $n\geq 2|\mu|+1$, and note that for non-zero coefficients $n-|\lambda|\geq n-|\mu|>|\mu|\geq |\lambda|\geq \lambda_1$ so that $z_{\lambda[n]}=z_\lambda\cdot (n-|\lambda|)$:
\begin{align}
    (-1)^{l(\mu[n])-l(\lambda[n])}\frac{w(B_{\mu[n],\lambda[n]})}{z_{\lambda[n]}}&=(-1)^{l(\mu)-l(\lambda)}\frac{w(B_{\mu[n],\lambda[n]})}{z_{\lambda}\cdot (n-|\lambda|)}\label{p/z->hcoef}\\
    &=\frac{(-1)^{l(\mu)-l(\lambda)}}{z_{\lambda}\cdot (n-|\lambda|)}\sum_{\substack{\nu \subseteq_{\text{set}} \mu\\ |\nu|=|\mu|-|\lambda|}}w(B_{\mu\backslash \nu,\lambda})\cdot 
 w(B_{\nu\cup(n-|\mu|),(n-|\lambda|)})  \label{splitW}\\
    &=\frac{(-1)^{l(\mu)-l(\lambda)}}{z_{\lambda}\cdot (n-|\lambda|)}\sum_{\substack{\nu \subseteq_{\text{set}} \mu\\ |\nu|=|\mu|-|\lambda|}}w(B_{\mu\backslash \nu,\lambda})\cdot(n-|\lambda|)\cdot|\mathcal{R}(\nu)| \label{cycleReduced}\\
    &=\frac{(-1)^{l(\mu)-l(\lambda)}}{z_{\lambda}}\sum_{\substack{\nu \subseteq_{\text{set}} \mu\\ |\nu|=|\mu|-|\lambda|}}w(B_{\mu\backslash \nu,\lambda})\cdot|\mathcal{R}(\nu)|,\label{reducedSum}
\end{align}
where $\mathcal{R}$ denotes the set of rearrangements of parts. Line (\ref{splitW}) follows since the largest brick, $n-|\mu|>\lambda_1$, must go in the bottom row along with the remaining $|\mu|-|\lambda|$ spaces filled by bricks given by some $\nu$. It is easy to check that since the brick fillings of the bottom row and the rest of $\lambda[n]$ are independent of each other upon specifying $\nu$, allowing the $w$-sum to factor as seen in line (\ref{splitW}). To see line (\ref{cycleReduced}), we have that
$$w(B_{\nu\cup(n-|\mu|),(n-|\lambda|)})=\sum_{T\in \mathcal{R}(\nu\cup (n-|\mu|))}w(T),$$
where $w(T)$ is the size of the last part in the rearrangement, $T$. Since $n-|\mu|$ is greater than all parts of $\nu$, all cyclic rearrangements are distinct. Letting
$T \sim T'$ indicate that $T$ and $T'$ are cyclic rearrangements and
$[T]$ denote the corresponding equivilance class, we get
\begin{align*}
    \sum_{[T]\in \mathcal{R}(\nu\cup (n-|\mu|))/\sim}\p{\sum_{T'\in [T]}w(T')}&=\sum_{[T]\in \mathcal{R}(\nu\cup (n-|\mu|))/\sim}(n-|\lambda|)\\
    &=(n-|\lambda|)\cdot |\mathcal{R}(\nu\cup (n-|\mu|))/\sim|\\
    &=(n-|\lambda|)\cdot \frac{1}{l(\nu)+1}\binom{l(\nu)+1}{1,c_1(\nu),c_2(\nu),\hdots}\\
    &=(n-|\lambda|)\binom{l(\nu)}{c_1(\nu),c_2(\nu),\hdots}\\
    &=(n-|\lambda|)\cdot |\mathcal{R}(\nu)|,
\end{align*}
since all equivilance classes have size $l(\nu)+1$. Thus, condition 1 holds as well as condition 3. Note that the stable range $n\geq 2|\mu|+1$ cannot be improved to $n\geq 2|\mu|$, since we can consider $\mu=\lambda=(k)$ for any $k$. Then the coefficient (\ref{p/z->hcoef}) for $n=2k$ is
$$\frac{w(B_{(k,k),(k,k)})}{z_{(k,k)}}=\frac{k^2}{2!\cdot k^2}=\frac12,$$
while the coefficient (\ref{p/z->hcoef}) for $n=2k+1$ is
$$\frac{w(B_{(k+1,k),(k+1,k)})}{z_{(k+1,k)}}=\frac{(k+1)k}{(k+1)k}=1.$$

Also note that by letting $\lambda=\emptyset$ and $\mu=(1^{n-1})$, we see that condition 2 fails. Thus, the adjoint change of basis $m\rightarrow p$ satisfies conditions 1,2 but not 3. We also see how the factor $z_{\lambda[n]}$ is necessary for convergence and so the change of basis $p\rightarrow h$ does not have stabilizing coefficients.\parspace

To consider the reverse change of basis $h\rightarrow p/z$, we recall that $OB_{\mu,\lambda}$ can be interpreted as the set of brick tableaux of shape $\lambda$, where the bricks from $\mu$ have distinguished labels, but their order within the rows of $\lambda$ is irrelevant. As expected, $|OB_{\mu[n],\lambda[n]}|=0$ unless $|\mu|\geq |\lambda|$ and trust that the reader can check that the coefficients stabilizes once $n\geq 2|\mu|+1$, since this would force the largest brick into the bottom row. Again, we can't say in general that $n\geq 2|\mu|$ is a stable range since one can take $\mu=\lambda=(k)$, for which $|OB_{(k,k),(k,k)}|=2$, but $|OB_{(k+1,k),(k+1,k)}|=1$.\parspace

Next we consider $IM_{\lambda,\mu}$ and $BM_{\lambda,\mu}$. For sufficiently large $n$, $IM_{\lambda[n],\mu[n]}=IM_{\lambda[n+1],\mu[n+1]}$ since there is a bijection between the $(l(\lambda)+1)\times(l(\mu)+1)$ matrices being enumerated that adds a 1 to the $(1,1)$-entry. The map is surjective since this entry is non-zero for large $n$. On the other hand $BM_{\lambda[n],\mu[n]}$ is eventually zero, since the $(1,1)$-entry of any contributing matrix must eventually be $>1$, which is impossible for boolean matrices. On the other hand conditions 2 and 3 fail since $IM_{(n),(1^n)}=BM_{(n),(1^n)}=1$ and the change of basis matrices are symmetric.\parspace

As for the inverse change of basis $m\rightarrow h$ (resp. $m\rightarrow e$), we may simply use the fact that $m_{(n)}=p_{(n)}$ and the previously analyzed $p\rightarrow h$ (resp. $p\rightarrow e$) to see that condition 1 fails. One can check that conditions 2 and 3 fail since $IM^{-1}_{(1^n),(n)}=IM^{-1}_{(n),(1^n)}\neq 0$ and $BM^{-1}_{(1^n),(n)}=BM^{-1}_{(n),(1^n)}\neq 0$, which follows from the combinatorial formula for $IM^{-1},BM^{-1}$ involving primitive bi-brick tableau introduced in Theorem 1 of \cite{primBrick}.\parspace

Finally, we look at the beloved character values, $\chi^\lambda_\mu$. It is easy to see that $\chi^{\lambda[n]}_{\mu[n]}$ stabilizes since we can insist that a rim hook tableau removes the longest hook, $n-|\mu|$, first. For $n$ sufficiently large, it will be forced to use the bottom row of $\lambda[n]$ and will thus start at the bottom rightmost cell with the rest of the rim hook placement being determined. Then we can establish $\chi^{\lambda[n]}_{\mu[n]}=\chi^{\lambda[n+1]}_{\mu[n+1]}$ by using a sign-preserving bijection that extend the longest rim hook by one cell to the right. Conditions 2 and 3 fail however since $\chi^{(1^n)}_{(n)}=(-1)^{n-1}$ and $\chi^{(n)}_{(1^n)}=1$. The remaining cases follow at once.    
\end{proof}

\begin{cor}\label{wtEquality}
    All sequences $(F_n\in \Lambda^{(n)})_n$ satisfy $\wt_s(F_\bullet)=\wt_h(F_\bullet)$ and $\wt_m(F_\bullet)=\wt_p(F_\bullet)$.
\end{cor}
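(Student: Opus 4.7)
The plan is to obtain both equalities as immediate consequences of the containment bounds for the $I^{f \to g}$ sets recorded in (\ref{Ibounds}). Each of those four sets has the specific shape $\{\mu : |\mu| \leq |\lambda|\}$, which is exactly what is needed to propagate a weight bound in either direction across a change of basis. No separate combinatorial argument is needed.

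For the first equality, I would first show $\wt_s(F_\bullet) \leq \wt_h(F_\bullet)$. Writing $F_n = \sum_\lambda d^h_{\lambda, n}\, h_{\lambda[n]}$ and expanding $h_{\lambda[n]} = \sum_\mu A_{\lambda[n], \mu[n]}\, s_{\mu[n]}$, the Schur coefficient is $d^s_{\mu, n} = \sum_\lambda d^h_{\lambda, n}\, A_{\lambda[n], \mu[n]}$. If $d^s_{\mu,n} \neq 0$ for some $n$, then some $\lambda$ contributes nontrivially: $d^h_{\lambda, n} \neq 0$ forces $|\lambda| \leq \wt_h(F_\bullet)$, and $A_{\lambda[n], \mu[n]} \neq 0$ places $\mu \in I^{h \to s}_\lambda$, whence $|\mu| \leq |\lambda|$ by (\ref{Ibounds}). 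Together this gives $|\mu| \leq \wt_h(F_\bullet)$, and taking the supremum over $\mu$ and $n$ yields $\wt_s(F_\bullet) \leq \wt_h(F_\bullet)$. The reverse inequality $\wt_h(F_\bullet) \leq \wt_s(F_\bullet)$ is the mirror argument using $I^{s \to h}_\lambda \subseteq \{\mu : |\mu| \leq |\lambda|\}$.

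The second equality $\wt_m(F_\bullet) = \wt_p(F_\bullet)$ is proved by the verbatim same argument, with the pair $(h, s)$ replaced by $(p, m)$ and invoking the other two containments in (\ref{Ibounds}), namely $I^{p \to m}_\lambda,\, I^{m \to p}_\lambda \subseteq \{\mu : |\mu| \leq |\lambda|\}$.

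There isn't really an obstacle, since the substantive content is already packaged inside Proposition \ref{genTransfer}. The one subtle point worth noting is that $I^{f \to g}_\lambda$ is defined by existence of \emph{some} $n$ with $A_{\lambda[n], \mu[n]} \neq 0$, so the support containment is uniform in $n$; this is what lets the argument transfer a global weight bound rather than merely an eventual one.
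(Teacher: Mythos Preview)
Your proof is correct and essentially identical to the paper's: the paper simply substitutes (\ref{Ibounds}) into the general inequality (\ref{3:ineq}) derived in Theorem \ref{stabTransferResult}, while you unpack that same argument explicitly for these particular basis pairs. The content is the same; the only difference is that you rederive the weight-transfer inequality inline rather than citing it.
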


\begin{proof}
    By substituting (\ref{Ibounds}) for $s\rightarrow h$ into (\ref{3:ineq}), one gets $\wt_h(F_\bullet)\leq \wt_s(F_\bullet)$. By the same argument, it follows that $\wt_s(F_\bullet)\leq \wt_h(F_\bullet)$ as well as the inequalities establishing $\wt_m(F_\bullet)=\wt_p(F_\bullet)$.
\end{proof}

\begin{rem}
    It may appear as if whenever conditions 1 and 2 hold for a change of basis matrix, $A$, then they also hold for the inverse, $A^{-1}$, as seen in the biconditional statements in Proposition \ref{genTransfer}. This is just a coincidence and an artifact of the change of basis matrices being particularly nice lower triangular matrices (with respect to an appropriate basis ordering).\parspace
    
    To see that this is not true in general, fix any total ordering, $\prec$, such that $|\lambda|+\lambda_1< |\mu|+\mu_1$ implies
    $\lambda \prec\mu$. Call the partitions with respect to this ordering $\lambda^{(1)},\lambda^{(2)},\hdots$ and observe that by the choice of ordering, the partitions of $n$ are precisely the partitions in the tuple
    $$L_n=(\lambda^{(1)}[n],\lambda^{(2)}[n],\cdots \lambda^{(p(n))}[n]),$$
    where $p(n)$ is the partition number. Define a new ordering on partitions by concatenating the tuples $L=L_0L_1L_2\cdots$ and define a matrix, $A$, indexed by partitions ordered by $L$:
    $$A_{\lambda^{(i)}[n],\lambda^{(j)}[n]}:=\begin{cases}
        1,&i=j,j+1,~\text{and }\lambda^{(i)}[n],\lambda^{(j)}[n]\text{ defined}\\
        0&\text{else}.
    \end{cases}$$
    so that each restriction of rows and columns to partitions of $n$ is 
    $$A^{(n)}:=A_{L_n,L_n}=J_{p(n)}(1)^T,$$
    a $p(n)\times p(n)$ lower triangular Jordan block with eigenvalue 1. One may also view $A$ as the block diagonal concatenation
    $$A=\text{diag}(A^{(0)},A^{(1)},A^{(2)},\cdots).$$
    This matrix can be interpreted as a change of basis matrix between some homogeneous symmetric function bases with respect to the order on partitions given by $L$. The matrix is invertible since it is block lower triangular with 1's on the diagonal and the inverse can be explicitly given by inverting each Jordan block
    $$A^{-1}_{\lambda^{(i)}[n],\lambda^{(j)}[n]}:=\begin{cases}
        (-1)^{i-j},&i\geq j,~\text{and }\lambda^{(i)}[n],\lambda^{(j)}[n]\text{ defined}\\
        0&\text{else}.
    \end{cases}.$$

    It follows that $A_{\lambda[n],\mu[n]}$ and $A^{-1}_{\lambda[n],\mu[n]}$ are both stable as soon $\lambda[n],\mu[n]$ are defined, which implies that both $A$ and $A^{-1}$ satisfy condition 1. By looking at columns of the Jordan blocks, we see that $A$ satisfies condition 2 with $J_{\lambda^{(j)}}=\{\lambda^{(j)},\lambda^{(j+1)}\}$ finite. However, $A^{-1}_{\lambda^{(i)}[n],\lambda^{(j)}[n]}\neq 0$ as long as $i\geq j$ and one takes an appropriately large $n$ implying condition 2 fails.\parspace

    By taking the transpose of $A$, we get a matrix for which conditions 1,3 hold, but $A^{-1}$ fails condition 3.
\end{rem}

\subsection{Concluding uniform Schur stability}
We can now apply the inequalities (\ref{3:ineq}-\ref{13:ineq}) to get the following corollary, which is relevant for concluding Schur stability and thus can help establish representation stability.

\begin{cor}\label{schurStabFrom:m,p/z}
    (a) Let $(F_n\in \Lambda^{(n)})_n$ be a Schur-positive sequence. The sequence is uniformally $s$-stable if and only if $\wt_s(F_\bullet)<\infty$ and it is $m$-stable. In this case, one has a bound on the stable range
    \begin{equation}\label{m->s individual range}
        \rg_\lambda^s(F_\bullet)\leq \max\p{2\cdot |\lambda|,~\max_{|\mu|\leq |\lambda|}\rg^m_\mu(F_\bullet)}
    \end{equation}
    and uniform stable range
    \begin{equation}\label{m->s uniform range}
        \rg^s(F_\bullet)\leq\max\p{2\cdot \wt(V_\bullet),~\max_{|\mu|\leq \wt_s(F_\bullet)}\rg^m_\mu(F_\bullet)}.
    \end{equation}
    (b) A sequence $(F_n\in \Lambda^{(n)})_n$ is uniformally $s$-stable if and only if $\wt_s(F_\bullet)<\infty$ and it is $p/z$-stable. In this case, one has a bound on the uniform stable range
    \begin{equation}\label{p/z->s stable range}
        \rg^s(F_\bullet)\leq \max\p{2\cdot \wt_s(F_\bullet)+1,~\max_{|\mu|\leq \wt_s(F_\bullet)}\rg^{p/z}_\mu(F_\bullet)}
    \end{equation}
\end{cor}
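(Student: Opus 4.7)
The plan is to derive both parts as direct specializations of the earlier machinery: Proposition \ref{genTransfer} supplies the biconditional equivalences, while inequalities (\ref{12:ineq}) and (\ref{13:ineq}) of Theorem \ref{stabTransferResult} applied to the relevant change of basis matrices supply the range bounds. The Schur-positivity hypothesis does not actually enter the stability transfer argument; it is a standing assumption that guarantees $\wt_s(F_\bullet)$ coincides with the representation-theoretic weight $\wt(V_\bullet)$ in applications to Frobenius images.

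For part (a), I will combine the general observation that $F_\bullet$ is uniformally $f$-stable iff it is $f$-stable with $\wt_f(F_\bullet)<\infty$ with the first bullet of Proposition \ref{genTransfer} to obtain the biconditional. For the individual range bound (\ref{m->s individual range}), I will apply inequality (\ref{12:ineq}) to the $m \to s$ transfer, whose coefficients are inverse Kostka numbers. Lemma \ref{kostkaStable} supplies both $N^{m\to s}_{\nu,\lambda}\leq 2|\lambda|$ and $J^{m\to s}_\lambda\subseteq\{\nu:|\nu|\leq|\lambda|\}$, so the bound drops out immediately, and the uniform bound (\ref{m->s uniform range}) follows by maximizing over $|\lambda|\leq\wt_s(F_\bullet)$, since Schur coefficients are eventually zero for heavier $\lambda$.

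For part (b), I will chain the second and third bullets of Proposition \ref{genTransfer} with Corollary \ref{wtEquality} to get: uniformally $s$-stable iff uniformally $h$-stable iff $h$-stable with $\wt_h(F_\bullet)<\infty$ iff $p/z$-stable with $\wt_s(F_\bullet)<\infty$, using $\wt_h(F_\bullet)=\wt_s(F_\bullet)$. For the range bound (\ref{p/z->s stable range}), I will chain two invocations of the transfer machinery: first apply (\ref{12:ineq}) to the $p/z\to h$ transfer using $N^{p/z\to h}_{\nu,\mu}\leq 2|\mu|+1$ and $J^{p/z\to h}_\mu\subseteq\{\nu:|\nu|\leq|\mu|\}$ from Proposition \ref{genTransfer} to get an estimate on $\rg^h(F_\bullet)$; then apply (\ref{13:ineq}) to the $h\to s$ transfer using $N^{h\to s}_{\lambda,\mu}\leq 2|\lambda|$ and $I^{h\to s}_\lambda\subseteq\{\mu:|\mu|\leq|\lambda|\}$ to bound $\rg^s(F_\bullet)$ in terms of $\rg^h(F_\bullet)$ and $2\wt_s(F_\bullet)$.

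The main obstacle is just careful bookkeeping. One has to correctly match the indexing conventions of the change-of-basis matrices with the $\lambda,\mu$ appearing in the $N$, $J$, $I$ data from Proposition \ref{genTransfer}, and verify that the contribution $2\wt_s(F_\bullet)$ coming from the $h\to s$ step is absorbed into the $2\wt_s(F_\bullet)+1$ coming from the $p/z\to h$ step, so that the composite estimate matches (\ref{p/z->s stable range}) exactly with no stray terms.
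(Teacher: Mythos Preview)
Your proposal is correct and follows essentially the same approach as the paper: for (a) you substitute the $m\to s$ data from (\ref{Nbounds}) and (\ref{Jbounds}) into (\ref{12:ineq}), and for (b) you chain (\ref{12:ineq}) for $p/z\to h$ with (\ref{13:ineq}) for $h\to s$ via Corollary \ref{wtEquality}, exactly as the paper does. Your observation that the Schur-positivity hypothesis is not used in this argument is also accurate; the paper's proof of this corollary never invokes it (it only matters in the Appendix, Lemma \ref{kvec lemma}).
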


\begin{proof}
    The forward direction for (a) is clear from Proposition \ref{genTransfer}, while the reverse direction follows from the case $m\rightarrow s$ of (\ref{Nbounds}) and (\ref{Jbounds}) being substituted into (\ref{12:ineq}).\parspace

    The forward direction for (b) is clear from the $s\rightarrow p/z$ entry of Table \ref{basisTable}. For the reverse direction, we have that $p/z$-stability of $F_\bullet$ implies $h$-stability with
    $$\rg^h_\nu(F_\bullet)\leq \max\p{2|\nu|+1,~\max_{|\mu|\leq |\nu|}\rg^{p/z}_\mu(F_\bullet)}$$
     by substituting the $p/z\rightarrow h$ case in (\ref{Nbounds}) and (\ref{Jbounds}) into (\ref{12:ineq}). By Corollary \ref{wtEquality}, $\wt_h(F_\bullet)=\wt_s(F_\bullet)<\infty$ so we have uniform $h$-stability and hence uniform $s$-stability. Then substituting (\ref{Nbounds}) and (\ref{Ibounds}) for the case $h\rightarrow s$ into (\ref{13:ineq}), we get
     $$\rg^s(F_\bullet)\leq \max\p{2\cdot \wt_s(F_\bullet)+1,~\max_{|\mu|\leq \wt_s(F_\bullet)}\rg^{p/z}_\mu(F_\bullet)}.$$
\end{proof}

We now state the main results as easy corollaries.
\begin{thm}\label{mainResult}
    (a) A sequence of $S_n$-modules $V=(V_n\in \textbf{Mod}_{S_n})_n$ is URMS if and only if $\text{wt}(V)<\infty$ and the sequence of Frobenius images $\mathcal{F}(V_\bullet)$ is $m$-stable. In this case, the multiplicity of $\mathbb{S}^{\lambda[n]}$ in $V_n$ stabilizes once
   \begin{equation}\label{schurRange}
       n\geq \max\p{2\cdot |\lambda|,\max_{|\mu|\leq |\lambda|}\rg^m_\mu(\mathcal{F}(V_\bullet))}.
   \end{equation}
    and a uniform stable range of $V_\bullet$ is given by
    \begin{equation}\label{totalRange}
        n\geq \max\p{2\cdot \text{wt}(V),\max_{|\mu|\leq \text{wt}(V)}\rg^m_\mu(\mathcal{F}(V_\bullet))}.
    \end{equation}    

    (b) A sequence of $S_n$-modules $V=(V_n\in \textbf{Mod}_{S_n})_n$ is URMS if and only if $\text{wt}(V)<\infty$ and for every $k\leq \wt(V)$ and every $\sigma \in S_k$,
    $$\chi^{V_n}(\sigma\cdot(k+1~k+2~\cdots~n))$$
    stabilizes for sufficiently large $n$. In this case, a uniform stable range of $V_\bullet$ is given by
    \begin{equation}\label{totalRange char version}
        n\geq \max\p{2\cdot \wt(V)+1,~\max_{|\mu|\leq \wt(V)}\rg^{p/z}_{\mu}(\mathcal{F}(V_\bullet))},
    \end{equation}
    where $\rg^{p/z}_{\mu}(\mathcal{F}(V_\bullet))$ is the minimum $n$ such that the character value
    $$\chi^{V_\bullet}(\sigma\cdot(|\mu|+1~|\mu|+2~\cdots~n)),$$
    has stabilized, where $\sigma$ is any element of $S_{|\mu|}$ with cycle type $\mu$.\hfill $\square$
\end{thm}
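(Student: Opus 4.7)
The plan is to deduce both parts from Corollary \ref{schurStabFrom:m,p/z} via the Frobenius characteristic isomorphism $\mathcal{F}$. Since $\mathcal{F}(\mathbb{S}^{\lambda[n]}) = s_{\lambda[n]}$, the multiplicity of $\mathbb{S}^{\lambda[n]}$ in $V_n$ equals $\langle \mathcal{F}(V_n), s_{\lambda[n]}\rangle$, so URMS of $V_\bullet$ is literally uniform $s$-stability of $\mathcal{F}(V_\bullet)$, with matching weights $\wt_s(\mathcal{F}(V_\bullet)) = \wt(V)$ and with $\rg^s_\lambda(\mathcal{F}(V_\bullet))$ equal to the stabilization threshold of the multiplicity of $\mathbb{S}^{\lambda[n]}$. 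Both biconditionals in the theorem thus reduce to corresponding biconditionals for symmetric-function expansions, and the range bounds translate under the same dictionary.

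For part (a), each $V_n$ is a genuine (non-virtual) $S_n$-module, so its Schur coefficients are non-negative and $\mathcal{F}(V_\bullet)$ is Schur-positive. Corollary \ref{schurStabFrom:m,p/z}(a) applies directly: uniform $s$-stability of $\mathcal{F}(V_\bullet)$ is equivalent to $\wt_s(\mathcal{F}(V_\bullet)) < \infty$ together with $m$-stability, and the bounds (\ref{m->s individual range}), (\ref{m->s uniform range}) become exactly (\ref{schurRange}) and (\ref{totalRange}) after Frobenius translation.

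For part (b), I use the standard fact that $\mathcal{F}$ sends the indicator class function of the conjugacy class of cycle type $\nu$ to $p_\nu/z_\nu$, so the coefficient of $p_\nu/z_\nu$ in $\mathcal{F}(V_n)$ is the character value $\chi^{V_n}(\tau_\nu)$ for any $\tau_\nu \in S_n$ of cycle type $\nu$. For $\nu = \mu[n]$ I would take $\tau_\nu$ to be the product of a fixed $\sigma \in S_{|\mu|}$ of cycle type $\mu$ (acting on $\{1,\ldots,|\mu|\}$ and fixing the rest) with a single $(n-|\mu|)$-cycle on the complementary letters, so that the character-value stability hypothesis in the theorem is literally $p/z$-stability of $\mathcal{F}(V_\bullet)$ at index $\mu$. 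Corollary \ref{schurStabFrom:m,p/z}(b) then supplies the equivalence with uniform $s$-stability and delivers the bound (\ref{totalRange char version}).

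The proof is almost entirely bookkeeping through $\mathcal{F}$; the substantive content lives in Corollary \ref{schurStabFrom:m,p/z}. The only point requiring any care is the translation in (b) between character values evaluated at permutations of type $\mu[n]$ and $p/z$-coefficients of $\mathcal{F}(V_n)$, which I would make explicit by invoking the character change of basis $s_\lambda = \sum_\mu \chi^\lambda_\mu \cdot p_\mu/z_\mu$ recalled in the preliminaries, together with the observation that any two elements of $S_n$ of cycle type $\mu[n]$ are conjugate and hence yield the same value of $\chi^{V_n}$.
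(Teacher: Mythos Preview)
Your proposal is correct and matches the paper's approach exactly: the paper presents Theorem \ref{mainResult} as an immediate corollary of Corollary \ref{schurStabFrom:m,p/z}, with part (a) obtained via the Frobenius dictionary and Schur-positivity of $\mathcal{F}(V_n)$, and part (b) via the identification of $p/z$-coefficients with character values on conjugacy classes of type $\mu[n]$. Your spelling out of the conjugacy-class translation in (b) is precisely the intended argument.
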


Note that by polynomiality of characters of $\mathbb{S}^{\lambda[n]}$ (see Theorem 3.3.4 in \cite{FI-module}, for instance), any URMS sequence will eventually have 
$$\chi^{V_\bullet}(\sigma\cdot(|\mu|+1~|\mu|+2~\cdots~n)),$$
stabilize for fixed $k$ and fixed $\sigma\in S_k$ since the character is determined by a polynomial in cycles of size $\leq\wt(V_\bullet)$. However, this result uses the stable range $n\geq \text{stab-deg}(V_\bullet)+\wt(V_\bullet)$ and does not extract a stable range from the character values, unlike (\ref{totalRange char version}) in our result.

\begin{rem}
    This remark is concerned with the sharpness of (\ref{schurRange}-\ref{totalRange char version}). As explained in the Appendix, one can omit the $2|\lambda|$ (resp. $2\cdot \wt(V)$) from (\ref{schurRange}) resp. (\ref{totalRange}) for series of symmetric functions arising from Frobenius images, although this will not have a significant impact on Sections \ref{coinvSec} and \ref{macSec}. Even with the $2|\lambda|$ dropped we may not have a sharp range. As an example consider 
    $$\mathcal{F}(\text{Ind}^{S_n}_{S_{\nu[n]}}(1))=h_{\nu[n]}=\sum_{\lambda}K_{\lambda[n],\nu[n]}\cdot s_{\lambda[n]}=\sum_\mu IM_{\nu[n],\mu[n]}\cdot m_\mu[n],$$
   where $IM_{\nu[n],\mu[n]}$ is stable once $n\geq |\nu|+|\mu|$ and this bound is strict. Hence for the case $\nu=(1^2)$, the Schur coefficient of $s_\lambda[n]$ when $\lambda=(1^2)$ is stable once
    $$n\geq \max_{|\mu|\leq |\lambda|}\p{|\nu|+|\mu|}=4$$
    by (\ref{m->s individual range}). However, the stable value
    $\langle F_n,s_{(1^2)[n]}\rangle=K_{(1^2)[n],(1^2)[n]}=1$
    is attained earlier at $n=3$. Notice that for $n\geq 4$, the sum $\sum_{|\mu|\leq |\lambda|}d_{\mu,n}K^{-1}_{\mu[n],\lambda[n]}$ expands as
    $$1\cdot 1+3\cdot (-1)+ 7\cdot 1+ 4\cdot (-1)=1,$$
    while the sum at $n=3$ has different terms
    $$1\cdot 1+3\cdot (-2)+6\cdot 1=1.$$
    Hence the bound (\ref{schurRange improve}) may not be sharp in general.\parspace

    Unlike the improved range in \ref{altProp}, one does need $2\cdot \wt(V)+1$ term in the uniform range (\ref{totalRange char version}). Consider the URMS sequence defined as
    $$V_n=\begin{cases}
        0,&n<4\\
        \mathbb{S}^{(2,2)}\oplus \mathbb{S}^{(3,1)}\oplus \mathbb{S}^{(4)}\oplus \p{\mathbb{S}^{(2,1^2)}}^{\oplus 2},&n=4\\
        \p{\mathbb{S}^{(n-2,1^2)}}^{\oplus 2},&n\geq 5
    \end{cases},$$
    which clearly has $\wt(V)=2$ and
    $\rg^s(\mathcal{F}(V_\bullet))=5=2\cdot \wt(V)+1$. However, one can check that the character values on the conjugacy classes, $\chi^{V_4}(C_{\mu[4]})=\chi^{V_n}(C_{\mu[n]})$, agree for $|\mu|\leq \wt(V)=2$ and $n\geq 5$. Listing the character values in a vector for $\mu=\emptyset,(1),(2),(1^2)$, we get
    $$(\chi^{V_4}(\mu[4]))_{|\mu|\leq 2}=\begin{matrix}
        \mu=\emptyset\\(1)\\(2)\\(1,1)
    \end{matrix}~~~~\overset{\mathbb{S}^{(2,2)}}{\begin{pmatrix}
        0\\-1\\2\\0
    \end{pmatrix}}+\overset{\mathbb{S}^{(3,1)}}{\begin{pmatrix}
        -1\\0\\-1\\1
    \end{pmatrix}}+\overset{\mathbb{S}^{(4)}}{\begin{pmatrix}
        1\\1\\1\\1
    \end{pmatrix}}+2\overset{\mathbb{S}^{(2,1,1)}}{\begin{pmatrix}
        1\\0\\-1\\-1
    \end{pmatrix}}=2\overset{\mathbb{S}^{(n-2,1,1)}}{\begin{pmatrix}
        1\\0\\0\\0
    \end{pmatrix}}=(\chi^{V_n}(\mu[n]))_{|\mu|\leq 2},$$
    where the $n$ on the right hand side is $\geq 5$. Hence, it is not sufficient to just have character values stabilizing in order to conclude that Schur multiplicities have stabilized.
\end{rem}

We conclude the section by focusing Theorem \ref{mainResult} on the particular case of torsion-free \textbf{FI}-modules.

\begin{thm}\label{mainResultTorFree}
A sequence $(V_n,\phi_n)_n$ arising from a torsion-free \textbf{FI}-module is URS, if and only if, $\dim(V_n)=O(n^k)$ for some $k$ and for each partition $\mu$, the corresponding monomial coefficient is bounded from above:
$$\sup_{n}~\langle \mathcal{F}(V_n),h_{\mu[n]}\rangle<\infty.$$
Furthermore, it suffices to check that these upper bounds exist for $|\mu|\leq k$.
\end{thm}

\begin{proof}
    The forward direction is obvious by the previous theorem. For the reverse direction, we have by Lemma \ref{weightDegree}, $\wt(V)\leq k$. Then $(V_n,\phi_n)_n$ will be URMS (and hence URS by Corollary \ref{URMS->URS}) if we can show that the sequences $(\mathcal{F}(V_n),h_{\mu[n]}\rangle)_n$ stabilize for $|\mu|\leq k$. By Lemma \ref{monomialMonotonicity}, these sequences are non-decreasing sequences of natural numbers, and so they will stabilize if and only if they are bounded from above.
\end{proof}

\section{Applications to $r$-diagonal coinvariant algebras}\label{coinvSec}
In this section, we consider sequences of multi-graded $S_n$-representations
called $r$-diagonal coinvariant algebras, the representation stability of which was studied in \cite{FI-module}. See the exposition of Bergeron \cite{bergeron} for more details on the combinatorics of these spaces.\parspace

Let $X^{(i)}_n$ be the variable set $x^{(i)}_1,\hdots x^{(i)}_n$ for $i=1,\cdots,r$. The algebra, $\C[X_n^{(1)},\hdots,X_n^{(r)}]$, has $r$-multi-grading given by degree, $\deg(x_k^{(i)})=1$. Let $S_n$ act on $\C[X_n^{(1)},\cdots,X_n^{(r)}]$ via the diagonal action by permuting each set of variables simultaneously, i.e. $\sigma\cdot x^{(i)}_k=x^{(i)}_{\sigma(k)}$. Let $\C[X_n^{(1)},\hdots,X_n^{(r)}]^{S_n}_+$ be the subalgebra of invariants under this action with no constant term. The ideal generated by this subalgebra, $\p{\C[X_n^{(1)},\hdots,X_n^{(r)}]^{S_n}_+}$, is often called the Hilbert ideal for the action. Then the $r$-\textbf{diagonal coinvariant algebra} is defined by taking the quotient by the Hilbert ideal,
$$R_n^{(r)}:=\C[X_n^{(1)},\hdots,X_n^{(r)}]/\p{\C[X_n^{(1)},\hdots,X_n^{(r)}]^{S_n}_+}.$$
Since the Hilbert ideal is homogeneous, this algebra inherits the multi-grading of $\C[X_n^{(1)},\hdots,X_n^{(r)}]$ and since the Hilbert ideal is $S_n$-invariant, we get an induced $S_n$-action on $R_n^{(r)}$. Note that if $J=(j_1,\hdots,j_r)$, the multi-graded component $[R_n^{(r)}]_J$ is also a $S_n$-representation, since the action preserves grading. \parspace

The representation theory of the diagonal coinvariant algebras is notoriously difficult. The classical case $r=1$, has been well understood (see \cite{bergeron} for an exposition) and there are combinatorial descriptions of the distribution of multiplicities of Specht modules throughout the graded components (See Eq (\ref{coinvFrob})). For the case $r=2$, there is a monomial formula (see \cite{shuffle}) for the graded Frobenius image but no combinatorial description of the Schur expansion, making this case perfect for our methods. For the case $r\geq 3$, there are no proven formulas for the Frobenius image.\parspace

Note that the representations $R_n^{(r)}$ do not admit an \textbf{FI}-module structure but rather a \textbf{co-FI}-module structure. In any case, we can still talk about the properties like weight and being URMS when considering sequences of $S_n$-representations for varying $n$. Note that while $R_n^{(r)}$ is not URMS, for any fixed $J=(j_1,\hdots,j_r)$, the graded component $[R_n^{(r)}]_J$ is in fact URMS, a celebrated result of \cite{FI-module}.\parspace

At the end of section 5.1 in \cite{FI-module}, the authors were able to use the \textbf{co-FI}-module structure of $R_n^{(r)}$ to prove the following statement which is of crucial importance to us in this section.
\begin{prop}[\cite{FI-module}]
    For any fixed $J=(j_1,\hdots,j_r)$, the sequence in $n$, $[R_n^{(r)}]_J$ has weight bounded above by $|J|:=j_1+\cdots+j_r$.
    \label{coinvWeight}
\end{prop}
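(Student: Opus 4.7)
The plan is to prove this directly via Frobenius images, bypassing the co-FI-module machinery entirely. Set $V_n := [R_n^{(r)}]_J$ and $W_n := [\C[X_n^{(1)}, \hdots, X_n^{(r)}]]_J$. Because the Hilbert ideal is both $S_n$-invariant and multi-homogeneous, $V_n$ is an $S_n$-module quotient of $W_n$. Since complex $S_n$-representations are semisimple, every Specht module occurring in $V_n$ already occurs in $W_n$, which reduces the problem to showing that $\wt(W_\bullet) \leq |J|$.

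Next I would decompose $W_n$ into $S_n$-orbits of monomials. A multi-homogeneous monomial $m$ of multi-degree $J$ involves at most $|J|$ distinct indices from $[n]$; let $S(m) \subseteq [n]$ denote this support and set $k := |S(m)| \leq |J|$. The stabilizer of $m$ contains the Young subgroup $S_{[n] \setminus S(m)} \cong S_{n-k}$ permuting unused indices, so the orbit module $\C[S_n \cdot m] \cong \text{Ind}_{\text{Stab}(m)}^{S_n}(\mathbf{1})$ is an $S_n$-quotient of $\text{Ind}_{S_{n-k}}^{S_n}(\mathbf{1})$, whose Frobenius image is $h_{n-k}\, h_1^k$.

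It remains to bound the $s$-weight of $h_{n-k}\, h_1^k$ by $k$. Young's rule gives $h_1^k = \sum_{\mu \vdash k} f^\mu s_\mu$, and Pieri's rule expands $h_{n-k}\, s_\mu = \sum_\nu s_\nu$ summed over $\nu \vdash n$ for which $\nu/\mu$ is a horizontal strip. Any such $\nu$ satisfies $\nu_{i+1} \leq \mu_i$, so writing $\nu = \lambda[n]$ forces
\begin{equation*}
|\lambda| \;=\; \nu_2 + \nu_3 + \cdots \;\leq\; \mu_1 + \mu_2 + \cdots \;=\; k \;\leq\; |J|.
\end{equation*}
Thus every orbit module contributes only Specht modules $\mathbb{S}^{\lambda[n]}$ with $|\lambda| \leq |J|$, which gives $\wt(W_\bullet) \leq |J|$ and hence $\wt(V_\bullet) \leq |J|$.

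The argument is essentially routine once the orbit reduction is in hand; the main thing to be careful about is translating between the standard Pieri statement about $\nu \vdash n$ and the paper's $\lambda[n]$-indexing, so that the weight bound emerges cleanly. A slicker alternative would be to observe that $h_{n-k}\, h_1^k = h_{(n-k,1^k)}$ and invoke Lemma \ref{kostkaStable} with $\mu = (1^k)$ directly, but the Pieri-based route is fully self-contained and makes no appeal to FI- or co-FI-module theory.
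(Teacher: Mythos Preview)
Your argument is correct. The paper does not actually prove this proposition; it simply cites the result from \cite{FI-module}, noting that Church, Ellenburg, and Farb established it via the \textbf{co-FI}-module structure on $R_\bullet^{(r)}$. Your route is genuinely different: you work entirely on the level of Frobenius images and permutation modules, never invoking any categorical machinery. The key observation that each degree-$J$ monomial has support of size at most $|J|$, so that its orbit module embeds in $\text{Ind}_{S_{n-k}}^{S_n}(\mathbf{1})$ with $k\leq |J|$, is exactly what one needs, and the Pieri computation finishing the bound is clean. The co-FI approach in \cite{FI-module} buys more---it yields finite generation and hence a stability-degree bound as well---but for the bare weight inequality your argument is shorter and fully self-contained. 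Your closing remark is also apt: since $h_{(n-k,1^k)}=h_{(1^k)[n]}$, the containment $J^{s\rightarrow m}_{(1^k)}\subseteq\{\lambda:|\lambda|\leq k\}$ from Lemma~\ref{kostkaStable} gives the same conclusion in one line.
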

By also computing a bound on the stability-degree the authors were able to conclude
\begin{prop}[\cite{FI-module}]\label{coinvFarbRange}
    For any fixed $J=(j_1,\hdots,j_n)$, the sequence in $n$, $[R_n^{(r)}]_J$ is URMS with URMS stable range
    $$n\geq 2|J|$$
\end{prop}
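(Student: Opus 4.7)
The plan is to apply Theorem~\ref{mainResult}(a) to the sequence $V_n = [R_n^{(r)}]_J$. Proposition~\ref{coinvWeight} already supplies the finite weight bound $\wt(V_\bullet) \leq |J|$, so the first term $2\wt(V_\bullet)$ in~(\ref{totalRange}) contributes at most $2|J|$. It then suffices to prove that for every partition $\mu$ with $|\mu|\leq |J|$, the monomial range satisfies $\rg^m_\mu(\mathcal{F}(V_\bullet)) \leq 2|J|$, after which Theorem~\ref{mainResult}(a) immediately yields the claimed stable range $n \geq 2|J|$.

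To attack the monomial stabilization, I would use the standard Frobenius--Hall identity $\langle \mathcal{F}(V), h_\mu\rangle = \dim V^{S_\mu}$, which identifies the coefficient of $m_{\mu[n]}$ in $\mathcal{F}([R_n^{(r)}]_J)$ with $\dim\bigl([R_n^{(r)}]_J\bigr)^{S_{\mu[n]}}$. The Young subgroup $S_{\mu[n]}$ contains the large factor $S_{n-|\mu|}$ permuting the last $n-|\mu|$ coordinates of each variable set $X_n^{(i)}$, so this invariant subspace consists of classes of polynomials totally symmetric in those indices. The goal is to build, for $n \geq 2|J|$, an explicit dimension-preserving comparison of these invariant subspaces at levels $n$ and $n+1$ by ``absorbing'' one extra coordinate into the symmetric block; the threshold $2|J|$ emerges from needing $n-|\mu| \geq |J| \geq \mu_1$, which is precisely the room required for degree-$|J|$ monomials to spread out while remaining symmetric in the large factor.

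For $r=1$ and $r=2$, such a comparison can be read off the known monomial formulas for $\mathcal{F}(R_n^{(r)})$ -- the major-index/descent sum and the Carlsson--Mellit shuffle formula, respectively -- which express both dimensions in terms of combinatorial objects whose bijection under ``adjoining a free coordinate'' is transparent once $n - |\mu|$ exceeds the total degree. For general $r$ (where no combinatorial formula is proven), I would bypass this by invoking the co-\textbf{FI} structure of $R^{(r)}_\bullet$ used in~\cite{FI-module}: the surjections $[R^{(r)}_{n+1}]_J\twoheadrightarrow [R^{(r)}_n]_J$ induce compatible surjections on $S_{\mu[n]}$-invariants, and combined with the weight bound $\wt(V_\bullet)\leq |J|$ this forces the invariant dimensions to stabilize by $n\geq 2|J|$. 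The main obstacle will be carefully controlling the Hilbert ideal relations under this comparison, since the ideal is regenerated for each $n$ and could in principle create or collapse invariant classes as $n$ grows; verifying that neither happens before $n = 2|J|$ is where a degree count tied to the weight estimate must be used. Once the dimension stabilization is established, feeding it into~(\ref{totalRange}) completes the proof.
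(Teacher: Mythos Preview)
The proposition is cited from \cite{FI-module} and the paper does not supply its own proof for arbitrary $r$; the original argument bounds both the weight and the stability degree of the co-\textbf{FI}-module $[R^{(r)}_\bullet]_J$ and combines them. What the present paper \emph{does} do, in Sections~\ref{coinvSec}.1--\ref{coinvSec}.2, is exactly your plan for $r=1$ and $r=2$: feed the explicit monomial formulas (the $q$-multinomial and the Carlsson--Mellit shuffle formula) into Theorem~\ref{mainResult}(a), together with the weight bound of Proposition~\ref{coinvWeight}, and read off the range $2|J|$. So for those two cases your sketch matches the paper's approach.

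The gap is your treatment of general $r$. You assert that the co-\textbf{FI} surjections $[R^{(r)}_{n+1}]_J\twoheadrightarrow [R^{(r)}_n]_J$ ``induce compatible surjections on $S_{\mu[n]}$-invariants, and combined with the weight bound \ldots\ this forces the invariant dimensions to stabilize by $n\geq 2|J|$.'' Taking $G$-invariants of a surjection in characteristic zero does give a surjection $V_{n+1}^{G}\twoheadrightarrow V_n^{G}$ for any fixed $G\leq S_n$, but the monomial coefficient you need at level $n+1$ is $\dim V_{n+1}^{S_{\mu[n+1]}}$, for a \emph{different} Young subgroup, and the co-\textbf{FI} map does not compare these two quantities. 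Nor does the weight bound furnish an a~priori upper bound on the invariant dimensions that would let any monotonicity conclude stabilization. In fact, given the weight bound, Proposition~\ref{altProp} shows that ``$d_{\mu,n}$ stable for all $|\mu|\le|J|$ once $n\ge 2|J|$'' is equivalent to the URMS statement with range $2|J|$ that you are trying to prove---so without an independent handle on the monomial coefficients the argument is circular. You yourself flag ``controlling the Hilbert ideal relations'' as the main obstacle; resolving it is precisely the stability-degree computation of \cite{FI-module} that Theorem~\ref{mainResult} was designed to sidestep, and for $r\geq 3$ there is no proven monomial formula to substitute. The honest fix is either to restrict your claim to $r\leq 2$, as the paper does, or to cite the stability-degree bound from \cite{FI-module} directly for general~$r$.
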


This stable range as well as other types of ``stable ranges'' not mentioned in this paper are studied in greater depth for the $r$-diagonal coinvariant algebras in \cite{bahran}. We study the cases $r=1,2$ through the monomial expansions and use Proposition \ref{coinvWeight} to get stable ranges without needing to compute stability-degree as done in \cite{FI-module}. We also get a refinement of Proposition \ref{coinvFarbRange} in the case $r=2$.

\subsection{Harmonics as \textbf{FI}-modules}
We recall the definition of the $r$-diagonal harmonics (see Bergeron \cite{bergeron} for exposition and proofs). Let $A_r$ denote the set of $r$-tuples, $(a_1,\cdots,a_r)$, such that $a_i\in \N$ and at least one $a_i\neq 0$. The $r$-diagonal harmonics, 
$$H_n^{(r)}:=\left\{f\in \C[X_n^{(1)},\hdots,X_n^{(r)}]:~\sum_{k=1}^n \p{\frac{\partial}{\partial_{x^{(1)}_k}}}^{a_1}\cdots \p{\frac{\partial}{\partial_{x^{(r)}_k}}}^{a_r}f=0,~(a_1,\cdots,a_r)\in A_r\right\}.$$
By the symmetry of the definition, $H_n^{(r)}$ is invariant under the diagonal action of $S_n$ and hence forms a multi-graded $S_n$-submodule of $\C[X_n^{(1)},\hdots,X_n^{(r)}]$. It is well known that the quotient map 
$$f\mapsto f+ (\C[X_n^{(1)},\hdots,X_n^{(r)}]^+_{S_n})$$
from $H_n^{(r)}$ to $R^{(r)}_n$ is a multi-grading preserving $S_n$-equivariant map and in fact
$$H_n^{(r)}\cong R^{(r)}_n$$
as multi-graded $S_n$-modules. Hence, for any $J=(j_1,\cdots,j_r)$, the $S_n$-modules $[H_n^{(r)}]_J$ form a URMS sequence with weight $\leq |J|$ and stable range $n\geq 2|J|$.\parspace

Since the $R^{(r)}_n$ are defined via quotients, they have a \textbf{co-FI}-module structure but no natural \textbf{FI}-structure. However, the harmonics, $H_n^{(r)}$, will have a natural \textbf{FI}-module structure defined as follows. Let 
$$i_n: \C[X_n^{(1)},\hdots,X_n^{(r)}]\rightarrow  \C[X_{n+1}^{(1)},\hdots,X_{n+1}^{(r)}]$$
be the standard injective homomorphism sending $x^{(i)}_k\mapsto x^{(i)}_k$ for $i=1,\hdots,r$ and $k=1,\hdots, n$. These maps are clearly $S_n$-homomorphisms satisfying $(n+1~n+2)\cdot i_{n+1}\circ i_n=i_{n+1}\circ i_n$. From this it follows that the sequence of polynomial algebras, $\C[X_{n+1}^{(1)},\hdots,X_{n+1}^{(r)}]$, arises from a torsion-free \textbf{FI}-module. Then the $H_n^{(r)}$ will correspond to an \textbf{FI}-submodule since one can easily check that
$$i_n(H_n^{(r)})\subseteq H_{n+1}^{(r)}$$
follows immediately from the definition. Hence, we obtain the following corollary of Corollary \ref{URMS->URS} and Proposition \ref{coinvFarbRange}.
\begin{cor}
    For any $J=(j_1,\cdots,j_r)$, the sequence $[H_n^{(r)}]_J$ with connective maps $i_n$ arises from a finitely generated torsion-free \textbf{FI}-module with weight $\leq |J|$ and a URS stable range $n\geq 2|J|$.]\hfill $\square$
\end{cor}

\subsection{$r=1$: the classical coinvariant algebra}
We will denote $R^{(1)}_n$ simply by $R_{n}$ and let $q$ track the degree. It is well-known (see section 8.5 of \cite{bergeron} and (1.89) in \cite{haglund}) that the graded Frobenius image of $R_n$ is given by
\begin{equation}\label{coinvFrob}
    \mathcal{F}(R_n;q)=\sum_{\lambda[n] \vdash n}\p{\sum_{T\in\text{SYT}(\lambda[n])} q^{\text{maj}(T)}}s_{\lambda[n]}=\sum_{\mu[n]\vdash n}\begin{bmatrix}
    n\\ \mu[n]
\end{bmatrix}_q
m_{\mu[n]},
\end{equation}
where $\begin{bmatrix}
    n\\ \mu[n]
\end{bmatrix}_q$ is the $q$-multinomial coefficient and $\text{maj}$ is the major index statistic on standard Young tableaux
$$\text{maj}(T)=\sum_{j\text{ below }j+1\text{ in }T}j.$$
Note that the sequence $(R_n)_n$ is not URMS since 
$$\langle\mathcal{F}(R_n),s_{\lambda[n]}\rangle=|\text{SYT}(\lambda[n])|\rightarrow \infty$$
for $\lambda\neq \varnothing$.\parspace

We will see how $[R_n]_i$ is URMS upon taking the $i$-th graded component. The stability of the multiplicities of the Specht modules in $[R_n]_i$ can be seen directly
by looking at
$$\langle \mathcal{F}([R_n]_i),s_{\lambda[n]}\rangle=\#\{T\in \text{SYT}(\lambda[n]):~\text{maj}(T)=i\},$$
and as seen in \cite{FI-module}, the map $\{T\in \text{SYT}(\lambda[n]):\text{maj}(T)=i\}\rightarrow \{T\in \text{SYT}(\lambda[n+1]):\text{maj}(T)=i\}$ that attaches a cell labeled $n+1$ to the end of a $T\in \text{SYT}(\lambda[n])$ becomes a bijection for $n\geq i+|\lambda|$. The weight bound in Proposition \ref{coinvWeight} implies that only $\lambda$ with $|\lambda|\leq i$ contribute to $[R_n]_i$. Hence, the corresponding degree-$i$ harmonics, $[H_n]_i$, form a uniformally representation stable sequence with URS stable range $n\geq 2i$.\parspace

Alternatively, we may look at the monomial expansion and apply our result, Theorem \ref{mainResult}. The monomial coefficients are given by
$$\langle \mathcal{F}([R_n]_i),h_{\mu[n]}\rangle=[q^i]\begin{bmatrix}
    n\\ \mu[n]
\end{bmatrix}_q=\#\{w\in\mathcal{R}(1^{n-|\mu|},2^{\mu_1},\hdots,(l+1)^{\mu_l}):\text{inv}(w)=i\},$$
where $[q^i]$ denotes taking the coefficient of $q^i$ and $l=l(\mu)$. The set on the right hand side consists of words with content $1^{n-|\mu|},2^{\mu_1},\hdots,(l+1)^{\mu_l}$ and $i$ inversions. Now consider the map 
$$\{w\in\mathcal{R}(1^{n-|\mu|},2^{\mu_1},\hdots,(l+1)^{\mu_l}):\text{inv}(w)=i\}\rightarrow \{w\in\mathcal{R}(1^{n+1-|\mu|},2^{\mu_1},\hdots,(l+1)^{\mu_l}):\text{inv}(w)=i\}$$
that maps $w\mapsto 1w$. We claim that for $n\geq |\mu|+i$ and $n\geq |\mu|+\mu_1$, this map is a bijection, i.e. that
$$\rg^m_{\mu}(\mathcal{F}([R_n]_i)\leq |\mu|+\max(i,\mu_1).$$
To see that it is surjective, we observe that any $w\in\mathcal{R}(1^{n+1-|\mu|},2^{\mu_1},\hdots,)$ with $\text{inv}(w)=i$, has $w_1=1$. If $w_1\neq 1$, then the initial entry, $w_1$, would contribute at least $n+1-|\mu|>i$ inversions and so we can't have $\text{inv}(w)=i$.\parspace

Since the weight of $[R_n]_i$ is bounded by $i$, we have a stable range from Eq (\ref{schurRange}) given by
$$n\geq \max\p{2i,\max_{|\mu|\leq i}|\mu|+\max(i,\mu_1)}=2i,$$
which agrees with the one given in \cite{FI-module} using their stability-degree formula as well as the explicit Schur expansion.

\subsection{$r=2$: classical diagonal coinvariant algebra}
We will denote $R_n^{(2)}$ by $DR_n$ and $H_n^{(2)}$ by $DH_n$ throughout this section and refer to the algebra as the diagonal coinvariant algebra. From Proposition \ref{coinvFarbRange}, the bi-graded components, $[DH_n]_{(i,j)}$, form a URS sequence for fixed $i,j$ with URS stable range $n\geq 2(i+j)$. Our goal in this subsection is to reproduce this stable range by looking at the monomial expansion of corresponding Frobenius image, $\mathcal{F}([DR_n]_{(i,j)})=\mathcal{F}([DH_n]_{(i,j)})$.\parspace

Letting $q$ track the $X^{(1)}_n$ degree and $t$ track the $X^{(2)}_n$ degree, the bi-graded Frobenius image is known to be 
$$\mathcal{F}(DR_n;q,t)=\nabla e_n,$$
where $\nabla$ is a certain operator defined on the modified Macdonald polynomials, $\tilde{H}_\mu[X;q,t]$, see \cite{Haiman_2002} and Theorem 2.17.1 \cite{haglund}. While it is still an open problem to find a combinatorial formula for the Schur expansion of $\nabla e_n$,
Carlsson and Mellit \cite{shuffle} proved a combinatorial formula for the monomial expansion of $\nabla e_n$. For any composition $\alpha \vDash n$, they show
\begin{equation}\label{shuffleThm}
    \langle \nabla e_n,h_\alpha\rangle =\sum_{\substack{P\in P_n\\ P~\alpha\text{-shuffle}}}q^{\text{dinv}(P)}t^{\text{area}(P)},
\end{equation}
where all the relevant terms will be subsequently defined. In particular, when $\alpha$ is of partition shape, the above yields a formula for the monomial coefficient of $m_\alpha$.\parspace

We now go through what all the terms in (\ref{shuffleThm}) mean. The set $P_n$ denotes the set of \textbf{labeled Dyck paths} in an $n$-by-$n$ grid. Recall that a Dyck path is a lattice walk in an $n$-by-$n$ grid consisting of $(0,1)$ and $(1,0)$ as steps (east and north steps),
starting at $(0,0)$, ending at $(n,n)$, and staying weakly above the diagonal $y=x$. We will encode Dyck paths as words in letters $N,E$ to denote the north (resp. east) steps taken in the lattice walk.
For example the red path in Fig \ref{labeled Dyck path} as an example is the Dyck path $NNEENNENEENE$.\parspace

A \textit{labeled} Dyck path consists of a Dyck path and a bijection $f$ from the set of north steps of the path to $[n]$, such that runs of consecutive north steps are mapped to increasing $i\in [n]$. Visually this can be represented by drawling labels to the right of each north step so that labels are increasing up columns (see Fig \ref{labeled Dyck path}).\parspace

The \textit{reading word} of a labeled Dyck path is obtained by scanning southwest along diagonals, starting from the diagonal $y=x+n$, then scanning $y=x+n-1$ and so on. Recording the labels encountered while scanning yields the reading word $\sigma(P)\in S_n$. A permutation, $\sigma$, is an $\alpha$\textbf{-shuffle} if the values $1,2,\cdots,\alpha_1$ occur in the same relative order in the one-line notation of $\sigma$, the values $\alpha_1+1,\cdots, \alpha_1+\alpha_2$ occur in the same relative order, and so on. We say $P$ is an $\alpha$-shuffle if $\sigma(P)$ is an $\alpha$-shuffle.\parspace

Finally, we describe the statistics, \textbf{dinv} and \textbf{area}, on the $P\in P_n$. The area statistic simply counts the number of cells that are \textit{entirely} contained between the Dyck path and diagonal $y=x$. The dinv statistic counts the number of \textit{diagonal inversions}. A diagonal inversion of a labeled Dyck path is a pair of cells, $b,c$, immediately right of a north step, with corresponding labels $i,j$, such that one of two following conditions are satisfied. Suppose that $b$ has the greater label, i.e. $i>j$. The first condition is that $b$ and $c$ are in the same diagonal $y=x+s$ and $b$ is further north than $c$. The second alternative condition is that $c$ is on a diagonal of the form $y=x+s$, $b$ is in a diagonal $y=x+s+1$, and $b$ is further south than $c$.\parspace

\begin{figure}[ht!]
    \centering
    \begin{tikzpicture}
    \draw[very thin, gray] (0,0) grid (6,6);
    
    \coordinate (start) at (0,0);

    \node at (0.5,0.5) {2};
    \node at (0.5,1.5) {3};
    \node at (2.5,2.5) {1};
    \node at (2.5,3.5) {4};
    \node at (3.5,4.5) {5};
    \node at (5.5,5.5) {6};
    
    \draw[thick, red, -] 
        (start) 
        -- ++(0,1)   
        -- ++(0,1)   
        -- ++(1,0)   
        -- ++(1,0)   
        -- ++(0,1)   
        -- ++(0,1)   
        -- ++(1,0)   
        -- ++(0,1)   
        -- ++(1,0)   
        -- ++(1,0)   
        -- ++(0,1)   
        -- ++(1,0) ;  
 \end{tikzpicture}
    \caption{A labeled Dyck path, $P$, with reading word $\sigma(P)=543612$, which is a $\alpha$-shuffle for $\alpha=(2,1,1,2)$ as well as all refinements of this composition. The area of $P$ is 3 and $\text{dinv}(P)=6$ with the following label pairs contributing diagonal inversions: $(2,6),(1,6),(3,4),(4,5),(3,5),(1,3)$. Hence $P$ contributes to the monomial coefficients of $m_{(2^2,1^2)},\,m_{(2,1^3)},\,m_{(1^4)}$ in $[DR_6]_{(5,3)}$.}
    \label{labeled Dyck path}
\end{figure}
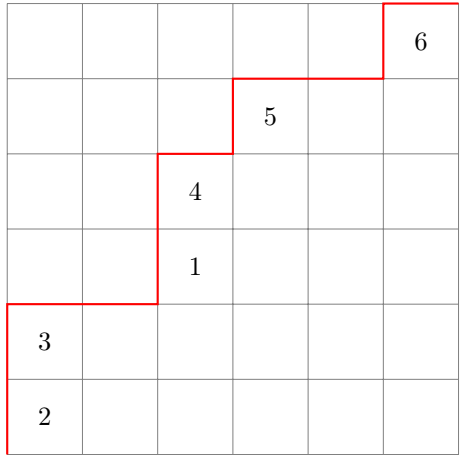

We now state the main result of this subsection. The following proposition may be viewed a refinement of Proposition \ref{coinvFarbRange} of Church, Farb, and Ellenberg, since it finds individual stable ranges for the multiplicities of Specht modules in $[DR_n]_{(i,j)}$. 

\begin{prop}\label{DRnStability}
       The coefficient of $\mathbb{S}^{\lambda[n]}$ in $[DR_n]_{(i,j)}\cong_{S_n} [DH_n]_{(i,j)}$ stabilizes once $n\geq \max(|\lambda|+i+j,2|\lambda|)$.
\end{prop}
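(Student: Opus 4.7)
The plan is to apply Theorem~\ref{mainResult}(a) to the sequence $V_n = [DR_n]_{(i,j)}$. Proposition~\ref{coinvWeight} gives $\wt(V_\bullet) \le i+j$, so by (\ref{schurRange}) the multiplicity of $\mathbb{S}^{\lambda[n]}$ stabilizes once
\[
n \ge \max\!\left(2|\lambda|,\ \max_{|\mu|\le|\lambda|}\rg^m_\mu(\mathcal{F}(V_\bullet))\right),
\]
and the stated bound $\max(|\lambda|+i+j,\,2|\lambda|)$ will follow once I show that $\rg^m_\mu(\mathcal{F}(V_\bullet)) \le |\mu|+i+j$ for every $\mu$, since then $\max_{|\mu|\le|\lambda|}(|\mu|+i+j) = |\lambda|+i+j$.

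By the Shuffle Theorem (\ref{shuffleThm}), the monomial coefficient $\langle \mathcal{F}([DR_n]_{(i,j)}),\, h_{\mu[n]}\rangle$ is exactly the cardinality of
\[
\mathcal{A}(n,\mu,i,j) := \{P\in P_n : P\text{ is a $\mu[n]$-shuffle},\ \dinv(P)=i,\ \area(P)=j\},
\]
so the task is to exhibit a bijection $\mathcal{A}(n,\mu,i,j)\to\mathcal{A}(n+1,\mu,i,j)$ valid for $n\ge|\mu|+i+j$. The natural candidate takes $P$ to the path $P'$ obtained by translating $P$ by $(1,1)$ and prepending the trivial main-diagonal touch $NE$ from $(0,0)$ to $(1,1)$, labeling the new $N$ step with $n+1-|\mu|$ (the new maximum of block $0$) and shifting every non-block-$0$ label of $P$ up by one. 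One checks immediately that area is preserved (the translation keeps each old cell on the same diagonal, and the new cell at $(0,0)$ sits on the main diagonal, contributing $0$), and that the $\mu[n+1]$-shuffle condition holds (the new cell is lowest on the main diagonal and so is read last in $\sigma(P')$, placing $n+1-|\mu|$ after the preserved increasing sequence of old block-$0$ labels).

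The main obstacle is dinv preservation: the new cell at $(0,0)$ can pick up spurious dinv pairs with main-diagonal cells of the shifted $P$ whose labels lie in blocks $\ge 1$ (hence exceed $n+1-|\mu|$). To handle this I would refine the construction by inserting the new $NE$ touch not at the origin but adjacent to a distinguished \emph{clean} trivial touch of $P$---that is, a size-$1$ main-diagonal block whose single cell carries a block-$0$ label and participates in no dinv pair. The existence of such a clean touch under the hypothesis $n\ge|\mu|+i+j$ is forced by the structural constraints on $P$: the decomposition of a Dyck path of area $j$ into primitive blocks places at most $2j$ columns in the non-trivial blocks (since a primitive block of size $t\ge 2$ has area $\ge t-1$), so at least $n-2j$ columns are trivial $NE$ touches, while at most $|\mu|$ labels lie outside block $0$ and at most $2i$ cells participate in any dinv pair. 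The inverse map should then locate and contract the distinguished clean touch in $P'$. Carrying out this refinement so that the bookkeeping yields the exact stable range $|\mu|+i+j$---rather than the looser bound $|\mu|+2i+j$ from a naive count---is the technical heart of the proof.
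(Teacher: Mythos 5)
Your overall strategy---apply Theorem~\ref{mainResult}(a) via the weight bound from Proposition~\ref{coinvWeight}, then attack $\rg^m_\mu$ using the Shuffle Theorem and a prepending map on labeled Dyck paths---is exactly the paper's. But there is a genuine gap, and you correctly diagnosed where it is: dinv preservation. The crucial idea you are missing is that the coefficient $\langle\mathcal{F}(DR_n;q,t),h_\beta\rangle$ depends only on the multiset of parts of $\beta$, so one is free to work with the composition $\alpha^{(n)}=(\mu_1,\dots,\mu_l,\,n-|\mu|)$ rather than $\mu[n]=(n-|\mu|,\mu_1,\dots,\mu_l)$. With the large block placed \emph{last}, the map prepends $NE$ at the origin and labels the new north step with $n+1$, which is the \emph{global} maximum. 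A cell at $(0,0)$ with the global maximum label cannot participate in any diagonal inversion: for the same-diagonal condition it would have to be further north than its partner, and for the adjacent-diagonal condition it would have to sit on a diagonal of higher index than $y=x$, both impossible. So dinv is preserved trivially, and no ``clean touch'' machinery is needed at all.

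Because you keep the large block first, your new label $n+1-|\mu|$ is only the maximum of block~$0$, and every block-$\ge 1$ label on the main diagonal of the translated path gives a spurious diagonal inversion with the new origin cell---precisely the obstruction you flagged. The refinement you sketch (insert the new $NE$ touch adjacent to a distinguished clean trivial touch found by a pigeonhole bound) is not carried out, and you yourself note that the naive accounting gives only $|\mu|+2i+j$, weaker than the needed $|\mu|+i+j$. As written, the proposal does not establish the claimed range. With the composition reordered, the surjectivity argument also becomes clean: if the bottom-left label of $P\in D^{(n+1)}$ is $a\ne n+1$, the $\alpha^{(n+1)}$-shuffle condition forces $a\le|\mu|$; since $\text{area}(P)=j$, at least $n-j$ other main-diagonal cells lie to its northeast, at most $|\mu|-1$ of which carry smaller labels, so $\text{dinv}(P)\ge n-j-(|\mu|-1)>i$, a contradiction once $n\ge i+j+|\mu|$. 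I'd recommend reworking the proof around this composition choice rather than pursuing the clean-touch refinement.
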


\begin{proof}
For our analysis, it will be convenient to use a rearrangement of the parts of $\mu[n]$. Define the composition $\alpha^{(n)}=(\mu_1,\cdots,\mu_l,n-|\mu|)$ for $n\geq |\mu|+\mu_1$. Then (\ref{shuffleThm}) tells us that
\begin{align}
    \left\langle \mathcal{F}\p{[DR_n]_{(i,j)}},h_{\mu[n]}\right\rangle &= [q^it^j]\left\langle \mathcal{F}(DR_n;q,t),h_{\mu[n]}\right\rangle\\
    &=[q^it^j]\left\langle \mathcal{F}(DR_n;q,t),h_{\alpha^{(n)}}\right\rangle\\
    &=\#\{P\in P_n:~P\text{ is an }\alpha^{(n)}\text{-shuffle},~\text{dinv}(P)=i,~\text{area}(P)=j\}\label{Dn}
\end{align}

Our goal is to find a range $n\geq M_\mu$ for which the above quantity has stabilized as this will allow us to apply the bound (\ref{schurRange}) in Theorem \ref{mainResult}.\parspace

Let $D^{(n)}$ denote the set being enumerated in (\ref{Dn}). To see when $|D^{(n)}|$ stabilizes, we can construct connective maps that are eventually bijections. Define the map $\psi_n:D^{(n)}\rightarrow D^{(n)}$ as follows. Given a labeled Dyck path $P$ with underlying Dyck path $\pi$, extend the Dyck path $\pi$ to the Dyck path, $NE\pi$, on an $(n+1)$-by-$(n+1)$ grid. Keep the same labeling, while adding $n+1$ next to this newly created north step. See Fig \ref{modified labeled Dyck path} for an example.\parspace

The new reading word can be obtained from $\sigma(P)$ by adding an $n+1$ to the end of its one-line notation. Hence, $\psi_n(P)$ is an $\alpha^{(n+1)}$-shuffle whenever $P$ is an $\alpha^{(n)}$-shuffle. One can easily verify that $\text{area}(\pi)=\text{area}(NE\pi)$ and $\text{dinv}(\psi_n(P))=\text{dinv}(P)$ since there are no new area-contributing cells and $n+1$ does not form any diagonal inversion pairs. Hence, $\psi_n:D^{(n)}\rightarrow D^{(n+1)}$ is well-defined.\parspace

\begin{figure}[ht!]
    \centering
        \begin{tikzpicture}
    \draw[very thin, gray] (0,0) grid (7,7);
    
    \coordinate (start) at (0,0);

    \node at (0.5,0.5) {7};
    \node at (1.5,1.5) {2};
    \node at (1.5,2.5) {3};
    \node at (3.5,3.5) {1};
    \node at (3.5,4.5) {4};
    \node at (4.5,5.5) {5};
    \node at (6.5,6.5) {6};
    
    \draw[thick, red, -] 
        (start)
        -- ++(0,1)   
        -- ++(1,0)   
        -- ++(0,1)   
        -- ++(0,1)   
        -- ++(1,0)   
        -- ++(1,0)   
        -- ++(0,1)   
        -- ++(0,1)   
        -- ++(1,0)   
        -- ++(0,1)   
        -- ++(1,0)   
        -- ++(1,0)   
        -- ++(0,1)   
        -- ++(1,0) ;  
\end{tikzpicture}
    \caption{The labeled Dyck path obtained from the one in Fig \ref{labeled Dyck path} after applying $\psi_6$.}
    \label{modified labeled Dyck path}
\end{figure}
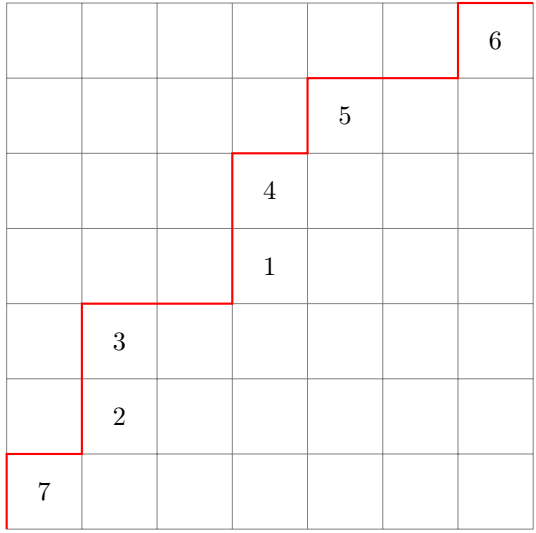

The map is clearly injective but not necessarily surjective for small $n$. We claim the map is surjective, once $n\geq i+j+|\mu|$ and $n\geq |\mu|+\mu_1$, i.e. that
$$\rg^m_{\mu}(\mathcal{F}([DR_n]_{(i,j)}))\leq |\mu|+\max(i+j,\mu_1).$$ Let $P\in D^{(n+1)}$. Since $\text{area}(P)=j$, there are at least $n+1-j$ rows with no area contributing cells. Hence there are at least $n+1-j$ cells on the diagonal $y=x$.\parspace

Now the reading word of $P$ must contain the values $|\mu|+1,\hdots,n,n+1$ in the same relative order, since the reading word is an $\alpha^{(n+1)}$-shuffle. Suppose for sake of contradiction that the bottom leftmost label of $P$ (i.e. the last entry of the reading word) is $a\neq n+1$. Then the relative order of $|\mu|+1,\hdots,n,n+1$ forces $a\leq |\mu|$. Looking for potential diagonal inversion pairs among labels in the diagonal $y=x$, the cell containing $a$ is southwest of at least $n-j$ other cells on the diagonal $y=x$. Since $a\leq |\mu|$, at most $|\mu|-1$ of these cells have a label less than $a$. Hence there are at least 
$$n-j-(|\mu|-1)>i=\text{dinv}(P)$$
diagonal inversion pairs, a contradiction. Hence the final entry of the reading word must be $n+1$.\parspace

Since labeled Dyck paths must be increasing up columns and $n+1$ is at the bottom of its column, $n+1$ is the only label in its column. Hence, the Dyck path of $P$ is of the form $NE\pi$ and $P$ lies in the image of $\psi_n$.\parspace

It follows that 
$$\left\langle \mathcal{F}\p{[DR_n]_{(i,j)}},h_{\mu[n]}\right\rangle$$
is stable once $n\geq i+j+|\mu|$ and so (\ref{schurRange}) implies that 

$$\left\langle \mathcal{F}\p{[DR_n]_{(i,j)}},s_{\lambda[n]}\right\rangle$$
is stable once 
$$n\geq \max\p{2|\lambda|,\max_{|\mu|\leq |\lambda|}(\max(i+j,\mu_1)+|\mu|)}=\max(2|\lambda|,\,i+j+|\lambda|)$$
\end{proof}

\begin{rem}
    The maps, $\psi_n$, defined in the proof above are injective for all $n$ and provide a combinatorial proof of inequality (\ref{monomIneq}) for $V_n=[DH_n]_{i,j}$
\end{rem}

Using Proposition \ref{coinvWeight}, we have $\text{weight}([DH_n]_{(i,j)})\leq i+j$. Note that it is in fact possible to show that $\text{weight}([DH_n]_{(i,j)})=i+j$ using combinatorial methods. Since $([DH_n]_{(i,j)})_n$ arises from a torsion-free \textbf{FI}-module and has monomial stable Frobenius images, it suffices to show that $\dim([DH_n]_{(i,j)})$ eventually agrees with a polynomial in $n$ of degree $i+j$, in order to conclude that the weight is $i+j$. This precise computation is carried out in Wang's dissertation \cite{wang}. Hence, (\ref{totalRange}) of Theorem \ref{mainResult} applies yielding the following corollary which agrees with the range given in Proposition \ref{coinvFarbRange}.
 \begin{cor}
      The $(i,j)$-bi-graded component of the diagonal harmonics $DH_n$ is URS with URS stable range $n\geq 2(i+j)$.
 \end{cor}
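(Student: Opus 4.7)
The plan is to assemble the pieces already in hand: the weight bound from Proposition \ref{coinvWeight}, the monomial-stability estimate obtained in the proof of Proposition \ref{DRnStability}, and the uniform stable range formula (\ref{totalRange}) of Theorem \ref{mainResult}(a). Since the sequence $V_\bullet = ([DR_n]_{(i,j)})_n$ is Schur-positive, Theorem \ref{mainResult}(a) reduces establishing URMS and a uniform range to checking finite weight and computing monomial stabilization, so no further stability-degree computation is needed.

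First I would invoke Proposition \ref{coinvWeight} to conclude $\wt(V_\bullet) \leq i+j$, which is in particular finite. Next I would recall that the proof of Proposition \ref{DRnStability} produced the estimate
\[
\rg^m_\mu\!\left(\mathcal{F}([DR_n]_{(i,j)})\right) \leq |\mu| + \max(i+j,\, \mu_1)
\]
for every partition $\mu$; in particular $\mathcal{F}(V_\bullet)$ is $m$-stable. Applying the forward direction of Theorem \ref{mainResult}(a), the sequence $V_\bullet$ is therefore URMS, and the uniform stable range is controlled by (\ref{totalRange}).

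The only remaining task is to simplify the right-hand side of (\ref{totalRange}) under the weight bound. For any $\mu$ with $|\mu| \leq \wt(V_\bullet) \leq i+j$, one has $\mu_1 \leq |\mu| \leq i+j$, so $\max(i+j,\mu_1) = i+j$ and hence
\[
|\mu| + \max(i+j,\mu_1) \;=\; |\mu| + (i+j) \;\leq\; 2(i+j).
\]
Therefore
\[
\max_{|\mu|\leq \wt(V_\bullet)} \rg^m_\mu(\mathcal{F}(V_\bullet)) \;\leq\; 2(i+j),
\]
and the bound (\ref{totalRange}) collapses to $n \geq \max(2(i+j),\, 2(i+j)) = 2(i+j)$. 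There is no real obstacle here — the step requiring care is merely the bookkeeping to verify that $\mu_1 \leq i+j$ for all contributing $\mu$, which is exactly what the weight bound of Proposition \ref{coinvWeight} guarantees. This recovers the stable range of Proposition \ref{coinvFarbRange} via purely monomial (shuffle-theorem) input, without any appeal to stability degree.
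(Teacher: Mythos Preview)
Your proposal is correct and follows essentially the same route as the paper: invoke Proposition \ref{coinvWeight} for the weight bound $\wt(V_\bullet)\leq i+j$, feed the monomial-range estimate $\rg^m_\mu\leq |\mu|+\max(i+j,\mu_1)$ from the proof of Proposition \ref{DRnStability} into the uniform range formula (\ref{totalRange}) of Theorem \ref{mainResult}(a), and simplify. The paper states this in one sentence before the corollary, whereas you spell out the bookkeeping (in particular the observation that $\mu_1\leq|\mu|\leq i+j$ collapses the max), but the argument is the same.
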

 
\section{Applications to Garsia-Haiman modules}\label{macSec}
In this section, we describe $S_n$-modules known as the Garsia-Haiman modules \cite{GHmodules}, whose bi-graded Frobenius images are $\tilde{H}_\mu$, the modified Macdonald polynomials. We will then show that they have an \textbf{FI}-module structure and hence are representation stable by the noetherian property of \textbf{FI}-modules. We will then provide an alternative direct approach to showing representation stability from the monomial expansions of their Frobenius images, while obtaining new insights into the combinatorics of Macdonald polynomials (see Corollary \ref{qtKostkaMonotonicity}).

\subsection{Garsia-Haiman modules}
As before, we take $DH_n$ to denote $H^{(2)}_n$, the diagonal harmonics, and denote the variable sets $X^{(1)}_n$ and $X^{(2)}_n$ instead by $x_1,\cdots,x_n$ and $y_1,\dots,y_n$.\parspace 

Given a partition shape $\nu\vdash n$ and a cell $c$ in $\nu$, we define the co-leg and co-arm of $c$, $l'(c)$ (resp. $a'(c)$), to be the number of cells below and in the same column as $c$ (resp. left and in the same row as $c$). Define the $\nu$-bialternant
$$\Delta_{\nu}=\det\p{x_i^{l'(c)}y_i^{a'(c)}}_{i=1,\hdots,|\nu|,~c\in \nu},$$ where $c$ ranges over some enumeration of the cells of $\nu$ and $\Delta_\nu$ is only defined up to sign since we don't provide an ordering on the cells.\parspace

For example, if $\nu=(3,2)$, then the (coleg,coarm) value pairs as we range over the cells are (0,0), (0,1), (0,2),(1,0), (1,1) and the $(3,2)$-bialternant is
$$\Delta_{(3,2)}=\det\begin{pmatrix}
    1&1&1&1&1\\
    x_1&x_2&x_3&x_4&x_5\\
    x_1y_1&x_2y_2&x_3y_3&x_4y_4&x_5y_5\\
    y_1&y_2&y_3&y_4&y_5\\
    y_1^2&y_2^2&y_3^2&y_4^2&y_5^2
\end{pmatrix}.$$

It is known (see solution to Exercise 2.12 \cite{haglund}) that $\Delta_\nu\in DH_n$ 
for all $n\geq |\nu|$. Let 
$$\mathcal{L}(f):=\text{Span}_\C(\text{partial derivatives of }f).$$
Since solutions to the differential equations associated with $DH_n$ are closed under partial differentiation (see Exercise 2.12 \cite{haglund}), we have
$$GH_{\nu}:=\mathcal{L}(\Delta_{\nu})\subseteq DH_n$$
is a multi-graded $S_n$-submodule of $DH_n$, which is referred to as the \textbf{Garsia-Haiman module} corresponding to $\nu$. We verify the following simple lemma.

\begin{lem}\label{MDcontainment}
    If $\mu\subset \nu$ are pair of partitions, then $GH_{\mu}\subset GH_\nu$.
\end{lem}

\begin{proof}
It suffices to show this for $\mu\subset \nu$ with $\mu\vdash n$ and $\nu\vdash n+1$, differing by a single cell. It is further enough to show that the bialternant $\Delta_{\mu}\in GH_{\nu}$, since $GH_{\nu}$ is closed under taking partial derivatives and the action of $S_n$. Let
    $$A=\p{x_i^{l'(c)}y_i^{a'(c)}}_{i=1,\dots,|\mu|,~c\in \mu^{(n)}},$$
    so that $\det(A)\underset{\sgn}{=}\Delta_{\mu}$, where $\underset{\sgn}{=}$ means equality up to sign. Let $\hat{c}$ be the unique cell $\nu/\mu$ so that
    $$\Delta_{\nu}\underset{\sgn}{=}\det\begin{bmatrix} & x_1^{l'(\hat{c})}y_1^{a'(\hat{c})}\\A & \vdots\\ & x_n^{l'(\hat{c})}y_n^{a'(\hat{c})}\\ (x^{l'(c)}_{n+1}y^{a'(c)}_{n+1})_{c\in \mu} & x_{n+1}^{l'(\hat{c})}y_{n+1}^{a'(\hat{c})}\end{bmatrix}\underset{\sgn}{=}x_{n+1}^{l'(\hat{c})}y_{n+1}^{a'(\hat{c})}\det(A)+\sum_{c\in\mu}x^{l'(c)}_{n+1}y^{a'(c)}_{n+1}f_c,$$
for some polynomials $f_c\in \C[x_1,\cdots,x_n,y_1,\cdots,y_n]$, independent of $x_{n+1},y_{n+1}$. Then
$$\p{\frac{\partial}{\partial x_{n+1}}}^{l'(\hat{c})}\p{\frac{\partial}{\partial y_{n+1}}}^{a'(\hat{c})}\Delta_{\nu}\underset{\sgn}{=}\p{l'(\hat{c})!\cdot a'(\hat{c})!}\Delta_{\mu}$$
since every cell, $c$, of $\mu$ is either strictly left of $\hat{c}$ (so that $a'(c)<a'(\hat{c})$) or strictly below $\hat{c}$ (so that $l'(c)<l'(\hat{c})$). This verifies that $\Delta_{\mu}\in GH_{\nu}$. 
\end{proof}

For the remainder of the section we will always let $\mu^{(\bullet)}$ denote a sequence of partitions $\mu^{(0)}\subset \mu^{(1)}\subset \dots\subset\mu^{(n)}\subset\dots$ such that $\mu^{(n)}\vdash n$. It follows that the sequence
$$GH_{\mu^{(0)}}\subset \dots \subset GH_{\mu^{(n)}}\subset GH_{\mu^{(n+1)}}\subset \dots$$
with inclusions as connective maps is a sub-\textbf{FI}-module of $(DH_n,i_n)_n$, which we will simply denote as $GH_{\mu^{(\bullet)}}$.
By Theorem 1.3 of \cite{FI-module}, each $(i,j)$-bi-graded component of $GH_{\mu^{(\bullet)}}$ is a finitely generated \textbf{FI}-module with weight $\leq i+j$. \parspace

\subsection{Application of monomial stability to modified Macdonald polynomials}
In this subsection, we will show that the $GH_{\mu^{(\bullet)}}$ are uniformally representation stable directly by looking at their Frobenius images. Since these modules are torsion-free, it suffices to show that they form a URMS sequence by Corollary \ref{URMS->URS}. We have the following Proposition of Haiman, which motivates the original interest in Garsia-Haiman modules as models of the more well-known modified Macdonald polynomials.

\begin{prop}[\cite{haimanMD}]\label{MDFrob}
    As an ungraded module, $GH_\nu$, is isomorphic to the regular representation of $S_{|\nu|}$. Letting $q$ track the $y$-weight and $t$ track the $x$-weight, we have that the bi-graded Frobenius image of the Garsia-Haiman module is
    $$\mathcal{F}(GH_\nu;q,t)=\tilde{H}_\nu[X;q,t],$$
    the modified Macdonald polynomials described in Definition \ref{modMacDef}. So in particular, $\tilde{H}_\nu[X;1,1]=h_{(1^{|\nu|})}$.
\end{prop}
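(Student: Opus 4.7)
The plan is to follow Haiman's proof of the ``$n!$ theorem,'' which is the deep content of this proposition. The argument divides into one preliminary observation, one essentially geometric step, and a final character-theoretic wrap-up.

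Preliminary setup: the diagonal $S_n$-action sends $\Delta_\nu$ to $\mathrm{sgn}(\sigma)\Delta_\nu$ because the determinant is antisymmetric in the variable pairs $(x_i,y_i)$, and this action commutes with partial differentiation. Hence $MD_\nu = \mathcal{L}(\Delta_\nu)$ is a bigraded $S_n$-submodule of $\mathbb{C}[x_1,\ldots,x_n,y_1,\ldots,y_n]$, and its bigraded Frobenius image $\mathcal{F}(MD_\nu;q,t)$ is a well-defined element of $\Lambda[q,t]$. An easy upper bound $\dim_\mathbb{C} MD_\nu \leq n!$ can be extracted from expanding $\Delta_\nu$ via Leibniz and counting the monomial derivatives that can survive, but the matching lower bound is the hard part.

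Main step: the crux is proving $\dim_\mathbb{C} MD_\nu = n!$ where $n=|\nu|$, and simultaneously computing its bigraded character. I would follow Haiman by realizing $MD_\nu$ as the scheme-theoretic fiber at the torus-fixed point $I_\nu$ of the tautological rank-$n!$ vector bundle $P$ on the isospectral Hilbert scheme $X_n \to \mathrm{Hilb}^n(\mathbb{C}^2)$. The equivariant $T^2$-character of this fiber at each monomial ideal $I_\mu$ can be computed directly from the tautological structure and turns out to coincide with $\tilde{H}_\mu[X;q,t]$, once one knows that the fiber dimension is constant. The main obstacle—and the step I would not attempt to reprove from scratch—is verifying that $X_n$ is normal, Cohen--Macaulay, and Gorenstein, which is precisely what forces constant fiber dimension $n!$ via semicontinuity. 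This is Haiman's ``polygraph'' argument and is genuinely the deep input.

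Conclusion: once the bigraded Frobenius identity $\mathcal{F}(MD_\nu;q,t) = \tilde{H}_\nu[X;q,t]$ is established, the ungraded statement follows by specialization. It is a standard identity for modified Macdonald polynomials that $\tilde{H}_\nu[X;1,1] = h_1^n = h_{(1^n)}$, which is exactly the Frobenius image of the left-regular representation of $S_n$. Comparing total dimensions and Frobenius images then yields $MD_\nu \cong \mathbb{C}[S_n]$ as ungraded $S_n$-modules, completing the proposition. The entire argument rests on the geometric regularity properties of $X_n$; everything else is formal bookkeeping in symmetric function theory and representation theory.
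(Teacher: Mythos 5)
The paper does not prove this proposition at all: it is quoted directly from Haiman \cite{haimanMD}, so your outline—fiber of the tautological rank-$n!$ bundle over the isospectral Hilbert scheme at $I_\nu$, with the polygraph/Gorenstein argument supplying constancy of fiber dimension and hence $\dim MD_\nu=n!$ and $\mathcal{F}(MD_\nu;q,t)=\tilde{H}_\nu[X;q,t]$—is just a faithful summary of the cited proof rather than something to compare against an argument in the paper, and deferring the deep geometric step is exactly what the paper does. One small caveat: your claim that the upper bound $\dim MD_\nu\leq n!$ falls out of a Leibniz expansion of $\Delta_\nu$ is not right as stated (naive counting of surviving monomial derivatives gives far more than $n!$), but this is inessential since the flatness argument already yields the exact dimension.
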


As with the diagonal harmonics and $\nabla e_n$, there are no known combinatorial formulas for the Schur decompositions of the modified Macdonald polynomials. The coefficients for the Schur expansion of $\tilde{H}_\nu[X;q,t]$ are referred to as the \textbf{modified $q,t$-Kostka numbers}, $\tilde{K}_{\lambda,\nu}(q,t)\in \N[q,t]$, satisfying
    $$\tilde{H}_\nu[X;q,t]=\sum_{\lambda \vdash |\nu|}\tilde{K}_{\lambda,\nu}(q,t)\, s_\lambda[X].$$
So in particular, $\tilde{K}_{\lambda,\nu}(1,1)=K_{\lambda,\nu}$ reduces to an ordinary Kostka number by Proposition \ref{MDFrob}. We get the following combinatorial inequality as an immediate corollary of applying Corollary \ref{monomialMonotonicity} to the torsion-free \textbf{FI}-module $GH_{\mu^{(\bullet)}}$.

\begin{cor}\label{qtKostkaMonotonicity}
    Let $\mu^{(1)}\subset \mu^{(2)}\subset\cdots$ be a sequence of partitions where $\mu^{(n)}\vdash n$ and $\lambda$ is some partition. Then for $n\geq |\lambda|+\lambda_1$
    $$\tilde{K}_{\lambda[n+1],\mu^{(n+1)}}(q,t)-\tilde{K}_{\lambda[n],\mu^{(n)}}(q,t)\in \N[q,t]$$
    and for $n\geq |\eta|+\eta_1$
    $$\langle \tilde{H}_{\mu^{(n+1)}},h_{\eta[n+1]}\rangle-\langle \tilde{H}_{\mu^{(n)}},h_{\eta[n]}\rangle\in\N[q,t].$$\hfill $\square$
\end{cor}

With the knowledge that the the monomial coefficients $\langle \mathcal{F}([GH_{\mu^{(n)}}]_{i,j}),h_{\eta[n]}\rangle$ form a non-decreasing sequence of natural numbers, monomial stability will follow as soon as we can show that they are bounded from above. For this we can leverage the following miraculous combinatorial monomial formula provided by Haglund, Haiman, and Loehr \cite{HHL}.

\begin{defi}\label{modMacDef}
    The \textbf{modified Macdonald polynomial} for a partition $\nu\vdash n$, is a symmetric function in variable set $X$ over $\C(q,t)$:
    $$\tilde{H}_\nu[X;q,t]=\sum_{\sigma:\nu\rightarrow \N}q^{\text{inv}(\sigma)}t^{\text{maj}(\sigma)}x_\sigma,$$
    where the sum is taken over all $\N$-labelings of the cells of $\nu$, called fillings, the statistics inv and maj are described below, and $x_{\sigma}=\prod_{c\in \nu}x_{\sigma(c)}$.
\end{defi}

We thus get the following formula for the monomial coefficients of the bi-graded components of the Garsia-Haiman modules
    \begin{align}
        \langle \mathcal{F}([GH_{\mu^{(n)}}]_{i,j}),h_{\eta[n]}\rangle&=\langle [q^it^j]\tilde{H}_{\mu^{(n)}},h_{\eta[n]}\rangle\\
        &=\#\{\sigma:\mu^{(n)}\rightarrow \N:\text{content }0^{n-|\eta|},1^{\eta_1},\hdots,l^{\eta_l},~\text{inv}(\sigma)=i,~\text{maj}(\sigma)=j\}\label{fillingSet}
    \end{align}
    where $l=l(\eta)$.\parspace

The \textit{reading order} of a diagram $\nu$ is the order on cells that goes left-to-right along rows starting from the top row and working down. The \textit{standardization} of a labeling $\sigma:\nu\rightarrow \N$ with content $\alpha=(\alpha_0,\alpha_1,\cdots)$, denoted by $\text{st}(\sigma)$, is an injective labeling $\sigma:\nu\rightarrow \{1,\cdots,|\nu|\}$ that labels the 0's appearing in $\nu$ as $1,2,\cdots,\alpha_0$ in the order that they appear in the reading order. The 1's appearing in $\nu$ are labeled $\alpha_0+1,\cdots,\alpha_0+\alpha_1$ in reading order and so on. A \textit{triple} consists of cells with (row,column) coordinates of the form $u=(r,c)$, $v=(r-1,c)$ and $w=(r,c+k)$ for $r,c,k\geq 1$, where $u,w$ must be cells of $\nu$, but $v$ is allowed to be a cell immediately below $\nu$. Extend $\text{st}(\sigma)$ to cells immediately below $\nu$ by labeling them $-\infty$. A triple is an \textit{inversion triple} of $\sigma$ if the labels $\text{st}(\sigma)(u),\text{st}(\sigma)(w),\text{st}(\sigma)(v)$ are decreasing clockwise. The statistic $\text{inv}(\sigma)$ counts the number of inversion triples of $\sigma$.\parspace

A \textit{descent} of $\sigma$ is a cell $u=(r+1,c)$ in $\nu$, such that $v=(r,c)$ is also in $\nu$ and $\sigma(u)>\sigma(v)$. Letting $\text{Des}(\sigma)$ denote that set of descents we define the \textbf{major index} statistic
$$\text{maj}(\sigma):=\sum_{u\in \text{Des}(\sigma)}(\text{leg}(u)+1),$$
where leg$(u)$ is the number of cells of $\nu$ in the same column as $u$ and above $u$. See Fig \ref{macdonaldEx} for an example.

\begin{figure}[ht!]
    \centering
    \begin{ytableau}
    1&4&3\\
    0&2&1\\
    0&3&0&1
    \end{ytableau}
    \caption{An $\N$-labeling of $\nu=(4,3,3)$. The descents occur at cells $(2,3),(3,1),(3,2),(3,3)$ yielding $\text{maj}(\sigma)=2+1+1+1=5$. The inversion triples are formed by $((3,2),(3,1),(3,3))$, $((1,2),(1,3))$, and $((1,2),(1,4))$ yielding $\text{inv}(\sigma)=3$.}
    \label{macdonaldEx}
\end{figure}

\begin{ques}\label{monotonicOpenProblem}
    The formula for the monomial coefficient (\ref{fillingSet})
    begs the question whether or not Corollary \ref{qtKostkaMonotonicity} (which followed from the non-trivial representation theoretic Lemma \ref{monotonictyLemma}) can be proven directly. In other, words can one describe an injective map from
    $$\{\sigma:\mu^{(n)}\rightarrow \N:\text{content }0^{n-|\eta|},1^{\eta_1},\dots,l^{\eta_l},~\text{inv}(\sigma)=i,~\text{maj}(\sigma)=j\}$$
    into
    $$\{\sigma':\mu^{(n+1)}\rightarrow \N:\text{content }0^{n+1-|\eta|},1^{\eta_1},\dots,l^{\eta_l},~\text{inv}(\sigma')=i,~\text{maj}(\sigma')=j\}.$$
    Note that this may be an intricate problem, since fillings can end up looking quite different in order to meet the same inv and maj target values
    in the new shape $\mu^{(n+1)}$.
\end{ques}

\begin{lem}\label{upperBoundMonomial}
        Fix $i,j\geq 0$ and a partition $\eta$. We have the following upper bound
        \begin{equation}\label{upperBoundEq}
            [q^it^j]\langle \tilde{H}_{\mu},h_{\eta[n]}\rangle \leq \binom{k}{\eta[k]},
        \end{equation}
        where $n$ and $\mu\vdash n$ are arbitrary and $k=(|\eta|+i)(|\eta|+j)$. We interpret the left hand side to be zero when $\eta[n]$ is not defined and interpret $\binom{k}{\eta[k]}=1$ in the single case when $\eta[k]$ is not defined ($i,j=0$ and $\eta=(1)$).
\end{lem}
\begin{proof}
    We will show the upper bound holds for any given $n$ and $\mu\vdash n$. Let $A\subseteq_{\text{diag}}{\mu}$ be the subset of the cells of $\mu$ consisting of the rightmost $|\eta|+i$ cells of the bottom row and the leftmost $|\eta|+i$ cells of any higher row. Let $B\subseteq_{\text{diag}} \mu$ be the subset of cells which are in either the topmost $j$ or bottommost $|\eta|$ cells of a column. Let $Z=A\cap B$. Note that by construction $|Z|\leq (|\eta|+i)(|\eta|+j)$. See Fig \ref{zoneExample} for an example.\parspace
    
    \begin{figure}[ht!]
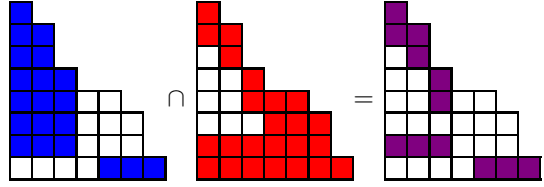

        \centering
        \ytableausetup{smalltableaux}
        \begin{ytableau}
            *(blue)\\
            *(blue)& *(blue)\\
            *(blue)& *(blue)\\
            *(blue)&*(blue)& *(blue)\\
            *(blue)&*(blue)&*(blue)&&   &\none&\none&\none[\cap]\\
            *(blue)&*(blue)&*(blue)&&&\\
            *(blue)&*(blue)&*(blue)&&&\\
            &&&&*(blue)&*(blue)&*(blue)
        \end{ytableau}
        \begin{ytableau}
            *(red)\\
            *(red)& *(red)\\
            & *(red)\\
            && *(red)\\
            && *(red)&*(red)&*(red) &\none&\none&\none[=]\\
            &&&*(red)&*(red)&*(red)\\
            *(red)&*(red)&*(red)&*(red)&*(red)&*(red)\\
            *(red)&*(red)&*(red)&*(red)&*(red)&*(red)&*(red)
        \end{ytableau}
        \begin{ytableau}
            *(violet)\\
            *(violet)&*(violet)\\
            &*(violet)\\
            &&*(violet)\\
            &&*(violet)&&\\
            &&&&&\\
            *(violet)&*(violet)&*(violet)&&&\\
            &&&&*(violet)&*(violet)&*(violet)
        \end{ytableau}
        \caption{An example of the zone $Z$ for $\mu=(7,6,6,5,3,2,2,1)$, $|\eta|=2$, $i=1$ and $j=2$, with the set $A$ in blue, the set $B$ in red, and the set $Z$ in purple.}
        \label{zoneExample}
    \end{figure}

We claim that any filling $\sigma:\mu\rightarrow \N$ contributing to the count $[q^it^j]\langle \tilde{H}_{\mu},h_{\eta[n]}\rangle$, i.e., a filling with content $0^{n-|\eta|}, 1^{\eta_1},\dots,l^{\eta_l}$ and $\inv(\sigma)=i$ and $\maj(\sigma)=j$, must have all its non-zero entries in $Z$.\parspace

Suppose for contradiction that $\sigma$ attached a non-zero label to a cell, $c$, outside of $A$. If $c$ is in the first row, then there are at least $|\eta|+i$ cells right of $c$, with at most $|\eta|-1$ of them being non-zero. But this means that $c$ creates at least $i+1$ inversions with the cells labeled zero to its right, a contradiction. Similarly, if $c$ is not in the bottom row then it has at least $|\eta|+i$ columns to its left, of which at least $i+1$ contain only zeros. But then $c$ forms an inversion triple of the form \begin{ytableau}
    0&\none&c\\
    0
\end{ytableau} with each of those columns, yielding the same contradiction. Hence $c\in A$.\parspace

Now we will show that $c\in B$. If $c$ is in the top $j$ entries of its column then we are done.
Suppose otherwise and observe that $c$ cannot form a descent, or else it would contribute at least $j+1$ to $\maj(\sigma)$. Hence $c$ is in the bottom row or the cell immediately below $c$, $c'$, has a non-zero label. But $c'$ cannot form a descent by the same reasoning and we can repeat the argument to conclude that there is a contiguous vertical interval of cells starting from $c$ to the bottom row containing non-zero labels. Hence $c$ must be in the bottom $|\eta|$ rows of $\mu$, implying $c\in B$. Thus, we have shown that if $\sigma(c)>0$, then $c\in A\cap B=Z$.\parspace

Since $[q^it^j]\langle \tilde{H}_{\mu},h_{\eta[n]}\rangle$ is bounded by the total number of ways to fill $\sigma|_Z:Z\rightarrow \N$ using content $0^{|Z|-|\eta|},1^{\eta_1},\dots,l^{\eta_l}$, we have
$$[q^it^j]\langle \tilde{H}_{\mu},h_{\eta[n]}\rangle\leq \binom{|Z|}{|Z|-|\eta|,\eta_1,\dots,\eta_l}\leq \binom{k}{\eta[k]},$$
where $k=(|\eta|+i)(|\eta|+j)$.
\end{proof}

Hence, we arrive at the following proposition.

\begin{prop}[Direct proof of $(GH_{\mu^{(\bullet)}})$ stability]
    Each $(i,j)$-bi-graded component of $(GH_{\mu^{(\bullet)}})$ is uniformally representation stable.
\end{prop}

\begin{proof}
    This follows by Theorem \ref{mainResultTorFree}, the fact that $\wt(GH_{\mu^{(\bullet)}})\leq i+j$, and the upper bounds on monomial coefficients established in Lemma \ref{upperBoundMonomial}.
\end{proof}

\subsection{Special case: $\mu^{(n)}=\mu[n]$}

In this subsection, we focus on the special case where we fix some $\mu$ and explicitly compute a stable range for the sequence of $(GH_{\mu[n]})_n$ for $n\geq |\mu|+\mu_1$.

\begin{lem}
    The monomial coefficient $\langle [q^it^j]\tilde{H}_{\mu[n]},h_{\eta[n]}\rangle$ stabilizes once $n\geq \max(|\mu|+\mu_1+|\eta|+i,~|\eta|+\eta_1)$.
\end{lem}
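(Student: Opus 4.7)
The plan is to exhibit an explicit bijection, valid for $n\ge\max(|\mu|+\mu_1+|\eta|+i,\,|\eta|+\eta_1)$, between the fillings counted in \eqref{fillingSet} for $n$ and for $n+1$. Since $\mu[n+1]$ is obtained from $\mu[n]$ by appending one cell to the bottom row and the content gains one extra $0$, the natural operation is to insert a single $0$-labelled cell. I call columns $1,\ldots,\mu_1$ of the bottom row the \emph{interface} (the cells sitting directly below a cell of row~$2$) and columns $\mu_1+1,\ldots,n-|\mu|$ the \emph{tail}; tail cells have no cell above them, and so contribute neither to descents nor to inversion triples with non-virtual~$v$.

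The key observation is that for $n\ge|\mu|+\mu_1+|\eta|+i$, every filling $\sigma$ contributing to \eqref{fillingSet} has the interface labelled entirely by $0$'s. Indeed, if $p_1<\cdots<p_s$ denote the bottom-row positions of the nonzero labels of $\sigma$ (so $s\le|\eta|$), then every $0$ lying to the right of the leftmost nonzero forms an inversion triple with it (as $u$, virtual $v$, $0$ as $w$), so the leftmost nonzero alone contributes at least $(n-|\mu|-p_1)-(s-1)$ to $\mathrm{inv}(\sigma)=i$. Rearranging gives $p_1\ge n-|\mu|-i-(|\eta|-1)\ge \mu_1+1$.

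With this in hand I would define $\phi_n$ by inserting a new $0$-cell at $(1,\mu_1+1)$ and shifting the cells originally at columns $\mu_1+1,\ldots,n-|\mu|$ one column to the right, and then check that $\phi_n$ preserves $\mathrm{maj}$ and $\mathrm{inv}$. Every descent involves either a row-$2$ entry paired with an interface entry (both untouched by $\phi_n$) or two entries above row~$1$ (also untouched), so $\mathrm{maj}$ is preserved. For $\mathrm{inv}$, triples split into those with $u$ in row~$1$ (virtual $v$) and those with $u$ in rows $\ge 2$ (so $v$ sits in the interface, where its label is $0$ and unchanged); a direct bookkeeping check shows that although the standardization values of all tail entries and all positive labels shift by $+1$ between $\sigma$ and $\phi_n(\sigma)$, the comparisons appearing in each triple are preserved, and the new $0$ cannot belong to any inversion triple because its standardization value lies strictly between those of the interface $0$'s and those of every tail entry. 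Injectivity of $\phi_n$ is immediate from the explicit inverse (delete $(1,\mu_1+1)$ and shift left), and surjectivity follows by applying the key observation with $n+1$ in place of $n$: that yields $p_1(\sigma')\ge\mu_1+2$, so $\sigma'(1,\mu_1+1)=0$ and $\sigma'$ lies in the image of $\phi_n$.

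The main obstacle is the standardization bookkeeping in the inversion-preservation step; the reason for choosing the insertion column to be $\mu_1+1$ (rather than, say, the right end of the bottom row) is precisely that this makes the standardization shifts symmetric across the new cell, so that no inversion triple changes status.
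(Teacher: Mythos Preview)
Your approach is essentially the same as the paper's. The paper defines its bijection $\gamma_n$ by inserting the new $0$ at the leftmost bottom cell and shifting the entire bottom row right, whereas you insert at column $\mu_1+1$; but once the key observation (that the interface cells are all $0$) is in place, the two maps produce literally the same filling, and both arguments use the identical inversion-count estimate to establish that observation. Your write-up is a bit more explicit about why $\mathrm{maj}$ and $\mathrm{inv}$ are preserved and about the standardization bookkeeping, which the paper handles in one sentence (``the status of descents and inversion triples doesn't change''), but there is no substantive difference in strategy.
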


\begin{proof}
Fix partitions $\mu,\nu$ and $i,j$ and let $M^{(n)}$ denote the set of being enumerated in (\ref{fillingSet}). Then $$\langle [q^it^j]\tilde{H}_{\mu[n]},h_{\eta[n]}\rangle=|M^{(n)}|$$ and we wish to show $|M^{(n)}|$ stabilizes.\parspace

Suppose $n\geq |\mu|+\mu_1+|\eta|+i$ and $n\geq |\eta|+\eta_1$ so that $\mu[n]$ and $\eta[n]$ are well-defined. Define the map $\gamma_n:M^{(n)}\rightarrow M^{(n+1)}$ as follows. For $\sigma \in M^{(n)}$, shift the bottom row of the corresponding diagram to the right by one cell and insert a new cell labeled 0 in the bottom left corner. So for instance:

$$\gamma_n:\begin{ytableau}
    1&0&2\\0&0&4&3&1&0
\end{ytableau}\,\longmapsto\, 
 \begin{ytableau}
    1&0&2\\0&0&0&4&3&1&0
\end{ytableau}$$

We have to check that the map is well-defined and surjective. We claim that the first $\mu_1+1$ entries of the bottom row of any $\sigma'\in M^{(n+1)}$ are 0. Suppose for contradiction that there was a non-zero value $a$ among these first $\mu_1+1$ cells. Then there are at least $n+1-|\mu|-(\mu_1+1)=n-|\mu|-\mu_1$ entries right of $a$. Of these, at most $|\eta|-1$ are allowed to be non-zero (since $a$ is one of the $|\eta|$ non-zero values in the labeling). Hence $a$ would contribute at least $n-|\mu|-\mu_1-(|\eta|-1)>i$ inversion triples formed by taking $a$, the phantom cell labeled $-\infty$ below it, and a cell labeled 0 to the right of it. This contradicts that $\text{inv}(\sigma')=i$ for $\sigma'\in M^{(n+1)}$.\parspace

This implies that if we let $\sigma$ be obtained from a $\sigma'\in M^{(n+1)}$ by deleting the bottom left cell and shifting the bottom row to the left by one, then $\sigma\in M^{(n)}$ and $\gamma_n(\sigma)=\sigma'$, which implies that $\gamma_n$ is well-defined and bijective. Indeed, the first $\mu_1$ cells of the bottom row of $\sigma$ will be labeled 0 and so the status of descents and inversion triples does not change when going from $\sigma'$ to $\sigma$. Thus, $\sigma'$ and $\sigma$ have the same inv and maj values.
\end{proof}

\begin{cor}\label{qtKostkaRange}
    The coefficient of $\mathbb{S}^{\lambda[n]}$ in $[GH_{\mu[n]}]_{(i,j)}$ stabilizes once $n\geq \max(2|\lambda|,|\lambda|+|\mu|+\mu_1+i)$. Thus, $[GH_{\mu[n]}]_{(i,j)}$ is URS with URS stable range $n\geq \max(2(i+j),|\mu|+\mu_1+2i+j)$.
\end{cor}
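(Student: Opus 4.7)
The plan is to apply Theorem \ref{mainResult}(a) to the sequence $V_\bullet = [MD_{\mu[\cdot]}]_{(i,j)}$, feeding it the monomial stability range from the preceding lemma and the weight bound $\wt(V_\bullet) \leq i+j$ from Proposition \ref{MDweight}. All the heavy lifting has already been done; what remains is purely bookkeeping of two maxima.

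For the first claim (individual Specht multiplicity stabilization), I will substitute the lemma's bound
$$\rg^m_\eta(\mathcal{F}(V_\bullet)) \leq \max\p{|\mu|+\mu_1+|\eta|+i,~|\eta|+\eta_1}$$
into the range (\ref{schurRange}). Taking the maximum over $|\eta|\leq |\lambda|$, the first term inside the max is clearly monotone in $|\eta|$ and so is maximized at $|\eta|=|\lambda|$, giving $|\lambda|+|\mu|+\mu_1+i$. The second term satisfies $|\eta|+\eta_1 \leq 2|\eta|\leq 2|\lambda|$, which is already absorbed by the $2|\lambda|$ appearing explicitly in (\ref{schurRange}). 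Thus (\ref{schurRange}) collapses to $n\geq \max(2|\lambda|,~|\lambda|+|\mu|+\mu_1+i)$, as claimed.

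For the URMS stable range, the same substitution into (\ref{totalRange}), now with $\wt(V_\bullet)\leq i+j$ from Proposition \ref{MDweight}, gives the first term maximum $|\mu|+\mu_1+2i+j$ (attained at $|\eta|=i+j$) and second term bound $2(i+j)$, the latter again absorbed by $2\wt(V_\bullet)\leq 2(i+j)$. This produces the stated range $n\geq \max(2(i+j),~|\mu|+\mu_1+2i+j)$.

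No genuine obstacle arises, as the two nontrivial ingredients — the monomial stable range from the lemma and the weight bound from Proposition \ref{MDweight} — have been assembled in advance. The only subtlety is keeping straight which term in the lemma's bound dominates after maximizing in $|\eta|$, and verifying that the $|\eta|+\eta_1$ term is in every case swallowed by the $2|\lambda|$ or $2(i+j)$ term already contributed by Theorem \ref{mainResult}(a).
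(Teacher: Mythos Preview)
Your proof is correct and follows essentially the same approach as the paper: both apply Theorem~\ref{mainResult}(a) with the monomial stable range from the preceding lemma and the weight bound from Proposition~\ref{MDweight}, then simplify the resulting maxima in the same way. Your version is slightly more explicit about why the $|\eta|+\eta_1$ term is absorbed by $2|\lambda|$ (resp.\ $2(i+j)$), but the argument is identical in substance.
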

\begin{proof}
    By Theorem \ref{mainResult}, the coefficient of $\mathbb{S}^{\lambda[n]}$ in $[GH_{\mu[n]}]_{(i,j)}$ stabilizes once $n\geq 2|\lambda|$ and $ \max(|\mu|+\mu_1+|\eta|+i,~|\eta|+\eta_1)$ for all $|\eta|\leq |\lambda|$. The maximum of all these quantities is $\max(2|\lambda|,|\lambda|+|\mu|+\mu_1+i))$. Since the weight of $[GH_{\mu[n]}]_{(i,j)}$ is $\leq i+j$, we get the stable range in the statement.
\end{proof}

We may restate Corollary \ref{qtKostkaRange} as follows. The coefficient $[q^it^j]\tilde{K}_{\lambda[n],\mu[n]}(q,t)$
is non-zero only if $|\lambda|\leq i+j$ and
stabilizes once $n\geq  \max(2|\lambda|,|\lambda|+|\mu|+\mu_1+i)$.
  
\begin{ex}\label{kostkaEx}
     One case in which the $q,t$-Kostka number is well-understood
     occurs when $\lambda[n]$ is a hook shape. Let $B_{\mu[n]}=\sum_{c\in \mu[n]}q^{a'(c)}t^{l'(c)}$ so that in particular $B_{\mu[n]}-1=\sum_{c\in \mu[n]\backslash (1,1)}q^{a'(c)}t^{l'(c)}$ is the sum over all but the bottom left cell. From \cite{macdonald1998symmetric}, we have the plethystic identity
$$\tilde{K}_{1^k[n],\mu[n]}(q,t)=\tilde{K}_{(n-k,1^k),\mu[n]}(q,t)=e_k[B_{\mu[n]}-1],$$
the elementary symmetric function $e_k$ with variables set to $q^{a'(c)}t^{l'(c)}$ for $c\in \mu[n]\backslash (1,1)$. We expect the coefficient of $q^it^j$ to stabilize once $n\geq \max(2k,k+|\mu|+\mu_1+i)$. It in fact stabilizes earlier; once $n\geq |\mu|+i+1$. This is because the additional term contributed by the cell added to the bottom row $B_{\mu[n+1]}-B_{\mu[n]}=q^{n+1-|\mu|-1}t^0$, which cannot contribute to the coefficient of $q^it^j$, once $n-|\mu|>i$.
\end{ex}

As noted by the authors in \cite{Garsia1999}, the formula in Example \ref{kostkaEx} admits the following plethystic formula generalization to all shapes $\lambda[n]$:
\begin{equation}\label{plethForm}
    \tilde{K}_{\lambda[n],\mu[n]}=\nabla^{-1} s_\lambda \left[\frac{1-\epsilon(MtB_\mu-1)}{M}-1-\epsilon[n-|\mu|]_q\right].
\end{equation}
Since $\nabla^{-1}$ introduces negative powers of $q$ and $t$, one cannot simply repeat the argument in Example \ref{kostkaEx}. However, with more control on the resulting positive and negative powers it may be possible to get a similar stable range to the one obtained above.
\parspace

Alternatively, in \cite{garsiaPleth} a formula of the form
\begin{equation}\label{k_lambda}
    \tilde{K}_{\lambda[n],\mu[n]}=k_{\lambda}[B_{\mu[n]};q,t],
\end{equation}
is found where $k_{\lambda}[X;q,t]$ is an inhomogeneous symmetric function with highest degree $|\lambda|$ and coefficients being Laurent polynomials in $q,t$. In Example \ref{kostkaEx}, $k_{1^k}[X]=e_k[X-1]$. Again, the coefficients have negative powers so the argument cannot run through as in the example.

\begin{ques}
    Can one alternatively prove Corollary \ref{qtKostkaRange} using a careful analysis of equation (\ref{plethForm}) or (\ref{k_lambda})?
\end{ques}

\section{Further work}

Similar to Wang's work \cite{wang} on computing the polynomial that eventually agrees with $\dim([DH_{n}]_{(i,j)})$, we can ask about the polynomial corresponding to $\dim([GH_{\mu[n]}]_{(i,j)})$ or more generally, $\dim([GH_{\mu^{(n)}}]_{(i,j)})$. In particular, we know that the degree of the polynomial will agree with $\wt([GH_{\mu^{(\bullet)}}]_{(i,j)})$ and it is interesting to see whether or not this degree attains its maximum possible value, $i+j$, like in the case of $DH_n$. We are currently in the process of carrying out this computation using the expansion of $\tilde{H}_\mu$ in the symmetric functions of Lascoux, Leclerc, and Thibon.\parspace

A related object of interest, is the intersection of Garsia-Haiman modules, $GH_{\mu}$ for varying $\mu$, although proofs of the conjectured Frobenius image and even the dimension of the intersection, remain elusive (see \cite{sciFi}). Recently, a conjectural monomial formula was presented for the intersection $GH_{\mu}\cap GH_{\lambda}$, where $\mu,\lambda$ differ by a cell \cite{butlerConj}. It may be interesting to explicitly observe the monotonicity described by Corollary \ref{monomialMonotonicity}, as well as the stability of the monomial coefficients as we grow $\mu,\lambda$ as this would provide further evidence for the validity of this formula.\parspace

We saw how for torsion-free \textbf{FI}-modules the eventual polynomial dimension of $V_n$ can provide a bound on weight. We ask if one can make similar conclusions for non-torsion-free \textbf{FI}-modules, provided we know something about the character values.

\begin{ques}
    Suppose that for large $n$, $\dim(V_n)=p(n)$ for a degree $K$ polynomial $p$ and the character values 
    $$\chi^{V_n}(\sigma\cdot (k+1~~~k+2~\cdots ~n))$$
    stabilize for all $\sigma \in S_k$ and all $k\leq K$. Is $(V_n)_n$ URMS?
\end{ques}

Note how this relates to polynomiality of characters: this provides a converse to Theorem 3.3.4 in \cite{FI-module}, while checking less conditions about the character values i.e. not demanding that $\chi^{V_n}(\sigma)$ is a polynomial in cycle counts of $\sigma$.\parspace

In a different direction, we can also consider combinatorial Hopf algebras other than the symmetric functions. There is an analogue of the Frobenius characteristic map between the Grothendieck ring of the 0-Hecke algebras, $(H_n(0))_n$, and the quasi-symmetric functions (see \cite{Laudone} or \cite{huang} for instance). Laudone \cite{Laudone} introduces the notion of representation stability for sequences, $(V_n)_n$, where $V_n$ is an $H_n(0)$-module. Instead of having the coefficient of $s_{\lambda[n]}$ stabilizing, the quasi-symmetric Frobenius images have the multiplicity of certain $(P,\omega)$-partition generating functions stabilize (see \cite{stanley2}).

\begin{ques}
    Can one generalize the methods developed for transferring stability between symmetric function bases to transferring stability between quasi-symmetric bases? Can these methods be applied to conclude 0-Hecke representation stability of sequences in the sense of \cite{Laudone}?
\end{ques}

\section*{Appendix}

As promised this appendix provides an alternate proof of Theorem  \ref{mainResult}(a) and in fact improve the ranges (\ref{m->s individual range}) and (\ref{m->s uniform range}).\parspace

\begin{prop}\label{altProp}
A sequence of $S_n$-modules $V=(V_n\in \textbf{Mod}_{S_n})_n$ is URMS if $\text{wt}(V)<\infty$ and the sequence of Frobenius images $\mathcal{F}(V_\bullet)$ is $m$-stable. In this case, the multiplicity of $\mathbb{S}^{\lambda[n]}$ in $V_n$ stabilizes once
   \begin{equation}\label{schurRange improve}
       n\geq \max_{|\mu|\leq |\lambda|}\rg^m_\mu(\mathcal{F}(V_\bullet)).
   \end{equation}
    and a uniform stable range of $V_\bullet$ is given by
    \begin{equation}\label{totalRange improve}
        n\geq \max_{|\mu|\leq \text{wt}(V)}\rg^m_\mu(\mathcal{F}(V_\bullet)).
    \end{equation}    
\end{prop}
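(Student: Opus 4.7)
The plan is to proceed by strong induction on $|\lambda|$, inverting the Kostka relations one level at a time and using Schur-positivity to make the improved bound work.

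For the base case $|\lambda|=0$, I would use the identity $s_{(n)}=h_n$ together with the fact that $\{h_\mu\}$ and $\{m_\mu\}$ are dual under the Hall pairing, which gives
\[
c_{\emptyset,n} \;=\; \langle F_n, s_{(n)}\rangle \;=\; \langle F_n, h_n\rangle \;=\; d_{\emptyset,n},
\]
so stability is automatic with the claimed range $\rg^m_{\emptyset}(\mathcal{F}(V_\bullet))$.

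For the inductive step, fix $\lambda$ with $|\lambda|=k$ and set $N:=\max_{|\mu|\leq k}\rg^m_\mu(\mathcal{F}(V_\bullet))$. By the inductive hypothesis, every $c_{\tau,n}$ with $|\tau|<k$ is stable at $n\geq N$. Writing out $d_{\mu,n}=\sum_{|\nu|\leq k}c_{\nu,n}K_{\nu[n],\mu[n]}$ (using the elementary observation that $K_{\nu[n],\mu[n]}=0$ whenever $|\nu|>|\mu|$, since the first row of $\nu[n]$ is too short to hold the $n-|\mu|$ ones), I would isolate the level-$k$ contribution and obtain the triangular system
\[
e_{\mu,n} \;:=\; d_{\mu,n} \;-\; \sum_{|\tau|<k} c_{\tau,n}\,K_{\tau[n],\mu[n]} \;=\; \sum_{\nu\in A_k(n)} c_{\nu,n}\,K_{\nu[n],\mu[n]},
\]
where $A_k(n)=\{\nu\vdash k:\nu_1\leq n-k\}$ is the set of partitions of $k$ for which $\nu[n]$ is a valid partition. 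The key structural point I would establish is that $A_k(n)$ is downward closed in dominance order (since $\mu\leq\nu$ forces $\mu_1\leq\nu_1$), so the restricted Kostka matrix on $A_k(n)$ is unitriangular in dominance and invertible, with entries equal to the stable values $K_{\nu,\mu}$ (because $n\geq k+\max(\nu_1,\mu_1)$ holds automatically for $\nu,\mu\in A_k(n)$). This inversion expresses each $c_{\nu,n}$ as a fixed signed integer combination of the $e_{\mu,n}$ for $\mu\in A_k(n)$, $\mu\geq\nu$ in dominance.

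The main obstacle is handling the transitions as $n$ grows past thresholds $n=k+\mu_1$ where a new partition $\mu$ enters $A_k(n)$: a priori the inversion formula for $c_{\lambda,n}$ could jump when a new residual term is introduced. To resolve this I would invoke the following Schur-positivity bootstrap: by the Pieri rule, the stable value $d_{(j)}^\infty$ equals $\sum_{i\leq j}c_{(i)}^\infty$, and more generally every $\mathbb{S}^{\nu[n]}$ with $\nu\in A_k(n)\setminus A_k(n-1)$ contributes positively to $d_{\nu,n}$ starting at exactly $n=k+\nu_1$. Hence, in any case where a newly activated term could change the value of $c_{\lambda,n}$, the corresponding $d_{\nu,\cdot}^\infty$ must be nonzero, forcing $\rg^m_\nu(\mathcal{F}(V_\bullet))\geq k+\nu_1$ and thus being captured by $N$ already. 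In the complementary case, the residuals for the not-yet-active partitions have stable value zero and therefore contribute nothing to the formula, so stability of $c_{\lambda,n}$ across the threshold follows. Combining the inductive step with the base case then yields the improved ranges (\ref{schurRange improve}) and (\ref{totalRange improve}); the uniform version follows at once from the individual version together with $\wt(V)<\infty$.
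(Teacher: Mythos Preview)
Your inductive setup, the base case, and the treatment of newly activated $\nu\in A_k(n+1)\setminus A_k(n)$ via Schur-positivity are all correct. But there is a genuine gap: you never establish that the residuals $e_{\mu,n}$ are stable for $\mu\in A_k(n)$ and $n\geq N$. The definition
\[
e_{\mu,n}=d_{\mu,n}-\sum_{|\tau|<k}c_{\tau,n}\,K_{\tau[n],\mu[n]}
\]
involves Kostka numbers $K_{\tau[n],\mu[n]}$ with $|\tau|<|\mu|=k$, and these need not have stabilized at $n=N$: stability of $K_{\tau[n],\mu[n]}$ requires roughly $n\geq k+\tau_1$, whereas your bootstrap applied to $\tau$ only yields $N\geq |\tau|+\tau_1$ (and $|\tau|<k$). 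So even after you show $c_{\nu,n+1}=0$ for newly activated $\nu$, you cannot conclude $c_{\lambda,n}=c_{\lambda,n+1}$ from the unitriangular inversion without first knowing $e_{\mu,n}=e_{\mu,n+1}$. You identify the threshold problem for the \emph{unknowns} (level $k$) but not for the \emph{subtracted} lower-level Kostka terms.

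The paper's proof closes exactly this gap with a $\tilde\mu$-construction (its key Lemma): whenever $K_{\tau[n],\mu[n]}<K_{\tau[n+1],\mu[n+1]}$ one exhibits $\tilde\mu$ with $|\tilde\mu|=|\mu|$, $\tilde\mu[n]$ undefined, and $K_{\tau[n+1],\tilde\mu[n+1]}>0$; then $d_{\tilde\mu,n+1}=d_{\tilde\mu,n}=0$ together with Schur-positivity forces $c_{\tau,n+1}=0$. In your framework this says: if $c_\tau^{\infty}\neq 0$ then every relevant $K_{\tau[n],\mu[n]}$ is already stable at $n\geq N$, and hence $e_{\mu,n}$ is stable. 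Without this construction (or an equivalent), the inductive step does not go through. The paper in fact dispenses with induction entirely: it compares $F_n$ and $F_{n+1}$ directly, uses the $\tilde\mu$-construction once to show that every $\lambda$ with $c_{\lambda,n+1}\neq 0$ has its full Kostka column vector $(K_{\lambda[n],\mu[n]})_{|\mu|\leq a}$ unchanged from $n$ to $n+1$, and finishes by linear independence of those vectors via dominance unitriangularity.
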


Note that because we need to consider all $|\mu|\leq |\lambda|$ in (\ref{schurRange improve}), we can take $\mu=(|\lambda|)$ which leave $\mu[n]$ undefined for $n<2|\lambda|$. In all the applications it is possible to have a non-zero monomial coefficient corresponding to $\mu[n]$ for $n\geq 2|\lambda|$ and hence we anyways need $n\geq 2|\lambda|$ for (\ref{schurRange improve}) to hold.\parspace

The proof of Proposition \ref{altProp} does not rely on the general ideas of stability transfer and instead uses very specific properties of the Kostka numbers as seen in the following key lemma. Recall, that a symmetric function is Schur-positive if the coefficients of its Schur expansion are non-negative integers. A degree-$n$ homogeneous symmetric function is the Frobenius image of some $S_n$-representation if and only if it is Schur-positive.

\begin{lem}\label{kvec lemma}
    Fix $n$ and let $F_{n+i}=\sum_{\lambda}c_{\lambda,n+i}s_{\lambda[n+i]}$, for $i=0,1$ be two Schur-positive functions. Let $F_{n+i}=\sum_\mu d_{\mu,n+i}m_{\mu[n+i]}$ be the corresponding monomial expansions, with the usual assumption that $c_{\lambda,n+i}=0$ and $d_{\lambda,n+i}=0$ when $\lambda[n+i]$ is not defined. If $d_{\mu,n}=d_{\mu,n+1}$ for all $|\mu|\leq a$ for some fixed $a$, then $c_{\lambda,n}=c_{\lambda,n+1}$ for all $|\lambda|\leq a$.
\end{lem}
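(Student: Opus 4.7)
The plan is to proceed by strong induction on $|\lambda|$, exploiting the expansion $d_{\mu, m} = \sum_\nu c_{\nu, m} K_{\nu[m], \mu[m]}$ together with the triangularity of the Kostka matrix in dominance. The base case $|\lambda| = 0$ is immediate: since $(n) = \emptyset[n]$ is the dominance-maximal partition of $n$, we have $K_{(n), (n)} = 1$ while $K_{\nu[n], (n)} = 0$ for every $\nu \neq \emptyset$, so $c_{\emptyset, n} = d_{\emptyset, n} = d_{\emptyset, n+1} = c_{\emptyset, n+1}$.

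For the inductive step, I assume the statement holds for all $|\nu| < k$, where $1 \leq k \leq a$, and target $|\lambda| = k$. The key combinatorial observation is that $K_{\nu[n], \mu[n]} = K_{\nu, \mu}$ (the classical Kostka number on partitions of size $k$) whenever $|\nu| = |\mu| = k$, independent of $n$: in any such SSYT, the $n - k$ ones are forced to fill the entire bottom row, reducing the enumeration to SSYT of the straight shape $\nu$ with content $\mu$. Splitting the expansions of $d_{\mu, n}$ and $d_{\mu, n+1}$ into the $|\nu| < k$ and $|\nu| = k$ pieces, subtracting, and cancelling the low-weight terms by the inductive hypothesis, I obtain, for each $\mu$ with $|\mu| = k$ and $\mu[n]$ defined,
\[
\sum_{\substack{|\nu| = k \\ \nu \geq \mu}} K_{\nu, \mu}\bigl(c_{\nu, n} - c_{\nu, n+1}\bigr) \;=\; \sum_{|\nu| < k} c_{\nu, n}\bigl[K_{\nu[n+1], \mu[n+1]} - K_{\nu[n], \mu[n]}\bigr],
\]
where the right-hand side is nonnegative by the SSYT injection of Lemma~\ref{kostkaStable}, and the matrix $(K_{\nu, \mu})_{|\nu| = |\mu| = k}$ on the left is upper-triangular in dominance with $1$'s on the diagonal. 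Once I show that the right-hand side vanishes, inverting this matrix yields $c_{\nu, n} = c_{\nu, n+1}$ for all $|\nu| = k$ with $\nu[n]$ defined; the remaining $|\nu|=k$ case (where $\nu[n]$ is undefined, so $c_{\nu, n} = 0$) will be handled by applying Schur-positivity to the equation $d_{\nu, n+1} = d_{\nu, n} = 0$ together with $K_{\nu[n+1], \nu[n+1]} = 1$.

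The main obstacle is forcing the right-hand side above to vanish, because for $n < k + \nu_1$ the Kostka difference is genuinely positive and Lemma~\ref{kostkaStable} alone does not suffice. My plan here is to combine Schur-positivity with the convention that $c_{\lambda, m} = 0$ whenever $\lambda[m]$ is undefined: for any $\mu'$ with $|\mu'| \leq a$ and $\mu'[n]$ undefined, the hypothesis yields $d_{\mu', n} = d_{\mu', n+1} = 0$, so if in addition $\mu'[n+1]$ is defined then the nonnegative expansion $\sum_\lambda c_{\lambda, n+1} K_{\lambda[n+1], \mu'[n+1]} = 0$ forces $c_{\lambda, n+1} = 0$ for every $\lambda$ with $K_{\lambda[n+1], \mu'[n+1]} > 0$. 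For a potentially obstructing $\nu$ on the right-hand side (so $|\nu| < k$, $\nu[n]$ defined, and $n < k + \nu_1$, which in particular forces $\nu_1 \geq 1$), I set $t = n + 1 - |\nu| - \nu_1 \geq 1$ and take the auxiliary partition
\[
\mu' \;=\; (\nu_1, \nu_2, \ldots, \nu_{l(\nu)}, \underbrace{1, \ldots, 1}_{t}),
\]
which satisfies $|\mu'| = n + 1 - \nu_1 \leq a$ (using $n < k + \nu_1 \leq a + \nu_1$), $|\mu'| + \mu'_1 = n + 1$ (so $\mu'[n]$ is undefined while $\mu'[n+1]$ is defined), and $\nu[n+1] \geq \mu'[n+1]$ in dominance on partitions of $n+1$ (so $K_{\nu[n+1], \mu'[n+1]} > 0$). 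The Schur-positivity scheme applied with this $\mu'$ then yields $c_{\nu, n+1} = 0$, and the inductive hypothesis forces $c_{\nu, n} = 0$, killing the offending term on the right-hand side and closing the induction.
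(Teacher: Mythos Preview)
Your proof is correct, and the core mechanism matches the paper's: both exploit Schur-positivity together with the hypothesis $d_{\mu',n+1}=d_{\mu',n}=0$ for a cleverly chosen $\mu'$ with $|\mu'|\le a$ and $|\mu'|+\mu'_1=n+1$, forcing certain Schur coefficients to vanish. The packaging differs. The paper argues in one shot: it forms the ``Kostka column vectors'' $\vec K_{\lambda,n}=(K_{\lambda[n],\mu[n]})_{|\mu|\le a}$, shows (via an auxiliary $\tilde\mu=(n{+}1{-}|\mu|,1^{2|\mu|-n-1})$ built from a \emph{witnessing content} $\mu$) that $\vec K_{\lambda,n}=\vec K_{\lambda,n+1}$ whenever $c_{\lambda,n+1}\neq 0$, and then invokes linear independence of the $\vec K_{\lambda,n}$ over all relevant $\lambda$ at once. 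You instead stratify by $|\lambda|=k$, use the clean identity $K_{\nu[n],\mu[n]}=K_{\nu,\mu}$ on the top stratum to reduce to the ordinary Kostka matrix on partitions of $k$, and build your auxiliary $\mu'=(\nu,1^t)$ from the \emph{offending shape} $\nu$ rather than from $\mu$. Both constructions land on the same Schur-positivity punchline; your induction makes the role of Kostka triangularity more explicit, while the paper's version is shorter and avoids the $|\nu|<k$ versus $|\nu|=k$ bookkeeping.

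One small logical reordering is needed in your outline: the ``remaining $|\nu|=k$ case'' (where $\nu[n]$ is undefined, so $c_{\nu,n}=0$ and you want $c_{\nu,n+1}=0$) must be dispatched \emph{before} the matrix inversion, not after. Until those terms are known to vanish, your linear system has more unknowns than equations---the equations are indexed only by $\mu$ with $\mu[n]$ defined, but the left-hand side ranges over all $\nu\vdash k$, including those with $\nu_1=n{+}1{-}k$---so the restricted Kostka matrix is not yet square. Your Schur-positivity argument for that case is self-contained and independent of the inversion, so simply swapping the order of the two steps closes the argument with no further work.
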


\begin{proof}
    Observe that we have the relation
    $$d_{\mu,n+i}=\sum_{\lambda}c_{\lambda,n+i}K_{\lambda[n+i],\mu[n+i]},$$
    where the terms of the sum are only non-zero when $|\lambda|\leq |\mu|$ by Lemma \ref{kostkaStable}. For $|\lambda|\leq a$, define the Kostka column vector, $\vec{K}_{\lambda,n}$ to be the vector $(K_{\lambda[n],\mu[n]})_{|\mu|\leq a}$. Then the condition $d_{\mu,n}=d_{\mu,n+1}$ for all $|\mu|\leq a$ becomes
    \begin{equation}\label{Kvec equality}
        \sum_{|\lambda|\leq a}c_{\lambda,n}\vec{K}_{\lambda,n}=\sum_{|\lambda|\leq a}c_{\lambda,n+1}\vec{K}_{\lambda,n+1}
    \end{equation}
    For $i=0,1$, let $T_i$ be the set of $\lambda$ with $|\lambda|\leq a$ and $c_{\lambda,n+i}\neq 0$. We claim that for all $\lambda\in T_1$, $\vec{K}_{\lambda,n}=\vec{K}_{\lambda,n+1}$.\parspace 
    
    Observe by the injectivity of the map, $\kappa_{n}$, defined in Lemma \ref{kostkaStable}, that $K_{\lambda[n],\mu[n]}\leq K_{\lambda[n+1],\mu[n+1]}$. We claim that for $|\lambda|\leq a$,  $c_{\lambda,n+1}=0$ whenever there exists a $|\mu|\leq a$ with $K_{\lambda[n],\mu[n]}< K_{\lambda[n+1],\mu[n+1]}$, which would imply that $\vec{K}_{\lambda,n}=\vec{K}_{\lambda,n+1}$ for $\lambda\in T_1$.\parspace
    
    If the inequality of Kostka number is strict, then
    $$\kappa_{n}:\text{SSYT}(\lambda[n],\mu[n])\rightarrow \text{SSYT}(\lambda[n+1],\mu[n+1])$$
    fails to be surjective and so there is a certain $T\in \text{SSYT}(\lambda[n+1],\mu[n+1])$ not in the image of $\kappa_n$. Note that it must be the case that the number of cells labeled 1 in $T$ is $n+1-|\mu|\leq \lambda_1$, i.e. $T$ has a cell above every cell containing a 1 or else $T$ would be in the image of the map. We also have that $|\mu|\geq |\lambda|$ since $\text{SSYT}(\lambda[n+1],\mu[n+1])$ is non-empty. Hence, $|\mu|\geq n+1-|\mu|$ and $2|\mu|-(n+1)\geq 0$. It follows then that $\text{SSYT}(\lambda[n+1],\tilde{\mu}[n+1])\neq \emptyset$ for $\tilde{\mu}=(n+1-|\mu|,1^{2|\mu|-(n+1)})$ since we can construct a $\tilde{T}\in \text{SSYT}(\lambda[n+1],\tilde{\mu}[n+1])$ by placing $n+1-|\mu|$ cells labeled 1 at the start of the bottom row, then placing $n+1-|\mu|$ cells labeled 2 at the start of the second row and filling in the remaining cells of $\lambda[n+1]$ with entries $3,4,\hdots$ in some order that creates a semi-standard Young tableau.\parspace
    
    Hence, $K_{\lambda[n+1],\tilde{\mu}[n+1]}\neq 0$, but $\tilde{\mu}[n]$ is not well-defined since $n<n+1=|\tilde{\mu}|+\tilde{\mu}_1$. Since $|\tilde{\mu}|=|\mu|\leq a$, we must have equality of
    $$d_{\tilde{\mu},n}=\sum_{\nu}c_{\nu,n}K_{\nu[n],\tilde{\mu}[n]}=0$$
    and $$d_{\tilde{\mu},n+1}=\sum_{\nu}c_{\nu,n+1}K_{\nu[n+1],\tilde{\mu}[n+1]}.$$
    By Schur-positivity, all the $c_{\nu,n+1}\geq 0$ and all the $K_{\nu[n+1],\tilde{\mu}[n+1]}\geq 0$ with $K_{\lambda[n+1],\tilde{\mu}[n+1]}>0$ and so it follows that $c_{\lambda,n+1}=0$.\parspace

    Thus, Eq (\ref{Kvec equality}) becomes
    $$\sum_{\lambda \in T_0\backslash T_1}c_{\lambda,n}\vec{K}_{\lambda,n}+\sum_{\lambda \in T_1\cap T_0}c_{\lambda,n}\vec{K}_{\lambda,n}=\sum_{\lambda \in T_1}c_{\lambda,n+1}\vec{K}_{\lambda,n}.$$
    We claim that the set of vectors, $\{\vec{K}_{\lambda,n}:\lambda\in T_0\cup T_1\}$, is linearly independent so that the above equation implies $T_0=T_1$ and $c_{\lambda,n}=c_{\lambda,n+1}$ for $|\lambda|\leq a$ concluding the proof of the lemma.\parspace

    For $\lambda\in T_0$, we have $\lambda[n]$ defined since $c_{\lambda,n}\neq 0$. For $\lambda\in T_1$, we have $\lambda[n+1]$ defined and so $\vec{K}_{\lambda,n}=\vec{K}_{\lambda,n+1}$ implies $\lambda[n]$ is defined as well. Fix a linear extension, $\succcurlyeq$, of the dominance order on the set of partitions $\{\mu[n]:|\mu|\leq a\}$. With respect to this order, every $\vec{K}_{\lambda,n}$ has $\mu$ entry $(\vec{K}_{\lambda,n})_\mu=K_{\lambda[n],\mu[n]}=1$ for $\mu=\lambda$ and $(\vec{K}_{\lambda,n})_\mu=0$ for all $\mu\succcurlyeq \lambda$. Hence the vectors must be linearly independent.
\end{proof}    

Proposition \ref{altProp} easily follows by taking Schur-positive $F_n=\mathcal{F}(V_n)$ since the lemma implies that once the monomial coefficients, $d_{\mu,n}$, stabilize for $|\mu|\leq |\lambda|$ (condition (\ref{schurRange improve})), we have stability of the Schur coefficients $c_{\lambda,n}$.

\bibliography{bibliography}

\end{document}